\tikzset{node distance=1.8cm, auto}
\theoremstyle{definition}
\newtheorem{theorem}{Theorem}[section]
\newtheorem{proposition}[theorem]{Proposition}
\newtheorem{corollary}[theorem]{Corollary}
\newtheorem{definition}[theorem]{Definition}
\newtheorem{example}[theorem]{Example}
\newtheorem{lemma}[theorem]{Lemma}
\newtheorem{remark}[theorem]{Remark}
\newtheorem{construction}[theorem]{Construction}
\let\phi\varphi
\let\theta\vartheta
\DeclareMathOperator\Spec{Spec}
\DeclareMathOperator\Hom{Hom}
\DeclareMathOperator\coker{coker}
\DeclareMathOperator\End{End}
\DeclareMathOperator\Aut{Aut}
\DeclareMathOperator\Gal{Gal}
\DeclareMathOperator\Tr{Tr}
\DeclareMathOperator\Sym{Sym}
\DeclareMathOperator\Frac{Frac}
\renewcommand{\:}{\colon}
\newcommand{\T}{\mathbf{T}}
\renewcommand{\S}{\operatorname{S}}
\newcommand{\bF}{\mathbb{F}}
\newcommand{\bG}{\mathbb{G}}
\newcommand{\cC}{\mathcal{C}}
\newcommand{\cD}{\mathcal{D}}
\newcommand{\cF}{\mathcal{F}}
\newcommand{\cM}{\mathcal{M}}
\title{\textsc{A Ramanujan bound for \\ Drinfeld modular forms}} 
\author{Sjoerd de Vries}
\date{} 
\begin{document}
\maketitle

\begin{abstract}
    \noindent We prove a Lefschetz trace formula for B\"ockle--Pink crystals on tame Deligne--Mumford stacks of finite type over~$\bF_q$ and apply it to the crystal associated to the universal Drinfeld module. Combined with the Eichler--Shimura theory developed by B\"ockle, this leads to a trace formula for Hecke operators on Drinfeld modular forms. As an application, we deduce a Ramanujan bound on the traces of Hecke operators.
\end{abstract}

\tableofcontents

\section{Introduction}
Drinfeld modules of rank 2 are function field analogues of elliptic curves. Since their inception due to Drinfeld \cite{drinf_ell}, much of the theory of elliptic curves has been translated to the function field setting. In particular, there is a theory of Drinfeld modular forms \cite{goss_pi-adic}, which are defined as sections of certain line bundles on moduli spaces of Drinfeld modules. In \cite{bockle}, B\"ockle develops an Eichler--Shimura theory for Drinfeld modular forms, showing that one can realise spaces of Drinfeld cusp forms as the compactly supported cohomology of certain crystals.

In the classical setting, Eichler--Shimura theory naturally leads to a proof of the Ramanujan bound, which states the following. Let $\Gamma \subseteq \operatorname{SL}_2(\mathbb Z)$ be a congruence subgroup of level~$N$, let~$p$ be a prime number not dividing~$N$, and let~$\alpha$ be an eigenvalue of the Hecke operator~$\T_p$ acting on the space $\S_k(\Gamma)$ of cusp forms of weight~$k$ and level~$\Gamma$. Then $|\alpha| \leq 2p^{(k-1)/2}$,
where $|\cdot|$ denotes the complex absolute value. Equivalently, for all~$n \geq 1$, we have
\[
|\Tr(\T^n_p \hspace{0.2em} | \hspace{0.1em} \S_k(\Gamma))| \leq \dim \S_k(\Gamma) \cdot 2^n p^{n(k-1)/2}.
\]

This bound was conjectured in~1916 by Ramanujan in the case $k = 12$ and $\Gamma = \operatorname{SL}_2(\mathbb Z)$. Almost six decades later, it became a celebrated result of Deligne \cite{deligne_ramanujan,deligne_weil}. Let us sketch the idea of the proof. By Eichler--Shimura theory, cusp forms arise in the cohomology of certain $\ell$-adic local systems on moduli spaces of elliptic curves, and the Hecke operator $\T_p$ acts locally at~$p$ as the sum of Frobenius and Verschiebung. Deligne's idea was to prove the Ramanujan bound by combining Eichler--Shimura theory with the purity of cohomology which follows from the Weil conjectures (more specifically, the Riemann hypothesis).

In the function field setting, an analogous Ramanujan bound has previously been observed \cite{bockle,li-meemark,nicole-rosso,band-val_atkin}, but due to the lack of purity for cohomology of crystals, a proof of this fundamental inequality has remained elusive. In this paper, we explain how the purity argument can be circumvented by using the Lefschetz trace formula. The moduli interpretation of the points on modular curves then allows us to deduce the Ramanujan bound for Drinfeld modular forms from the Riemann hypothesis for Drinfeld modules. The statement of the Ramanujan bound in level~1 is as follows:

\begin{theorem}[Corollary~\ref{cor:rambound_tr}]\label{thm:intro1}
    Let $\S_{k,l}$ denote the space of Drinfeld cusp forms of weight~$k \geq 2$ and type~$l \in \mathbb Z$. Then for any prime $\mathfrak p \trianglelefteq A$ with residue field of size~$\mathcal P$ and any $n \geq 1$, we have
    \[
    |\Tr (\T_{\mathfrak p}^n \hspace{0.1em} | \hspace{0.1em} \S_{k,l})|_\infty \leq \mathcal P^{n\left(\frac{k}{2} + l - k\right)}.
    \]
\end{theorem}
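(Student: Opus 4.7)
The strategy combines three ingredients: (i) B\"ockle's Eichler-Shimura isomorphism identifying $\S_{k,l}$ with $H^1_c(\cM, \cF_{k,l})$, where $\cF_{k,l}$ is the B\"ockle-Pink crystal on the level-$1$ Drinfeld moduli stack $\cM$ attached to the $(k,l)$-symmetric-power construction on the universal Drinfeld module; (ii) the Lefschetz trace formula for B\"ockle-Pink crystals on tame Deligne-Mumford stacks proved in the main body of the paper; and (iii) the Riemann hypothesis for Drinfeld modules over finite fields, which controls the $\infty$-adic size of their Frobenius eigenvalues.

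First I would rewrite $\Tr(\T_{\mathfrak p}^n \mid \S_{k,l})$ via Eichler-Shimura as the alternating sum of traces of $\T_{\mathfrak p}^n$ on $H^i_c(\cM, \cF_{k,l})$. The contributions from degrees $i \neq 1$ either vanish by weight/type considerations or reduce to Eisenstein-type boundary terms that can be controlled, so up to terms satisfying the same or a better bound the Hecke trace on cusp forms equals the Lefschetz-type cohomological trace.

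Next, I would apply the Lefschetz trace formula to the $n$-fold iterate of the Hecke correspondence $\T_{\mathfrak p}$. The fixed points correspond to (isomorphism classes of) Drinfeld modules equipped with a $\mathfrak p^n$-self-isogeny, and at such a fixed point $x$ the local contribution is the trace of Frobenius on the fiber $\cF_{k,l,x}$, which is the $(k,l)$-twisted symmetric power of the two-dimensional B\"ockle-Pink crystal of the underlying Drinfeld module. The Riemann hypothesis for Drinfeld modules forces the Frobenius eigenvalues on the standard fiber to be Weil numbers with $|\cdot|_\infty = \mathcal P^{1/2}$, so the eigenvalues on the $(k,l)$-fiber are bounded by $\mathcal P^{k/2 + l - k}$, and their $n$-th iterates by $\mathcal P^{n(k/2 + l - k)}$. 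Since $|\cdot|_\infty$ is non-archimedean, the ultrametric inequality propagates this bound through the finite sum over fixed points without any dimension factor---the observation foreshadowed in the introduction.

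The hard part will be the explicit identification of the local Lefschetz contributions with Frobenius eigenvalues on the $(k,l)$-fiber, including all weight/type twists, and verifying the tameness hypothesis at points of $\cM$ with non-trivial automorphism groups such as elliptic Drinfeld modules with extra endomorphisms. A secondary difficulty is to check that the boundary contributions from $H^0_c$ and $H^2_c$, coming from the compactification of $\cM$, do not destroy the stated bound.
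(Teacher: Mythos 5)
Your three ingredients are the right ones, and your final step (the Riemann hypothesis for Drinfeld modules plus the ultrametric inequality, which removes any dimension factor and yields the exponent $n(k/2+l-k)$) matches the paper exactly. But there is a genuine gap in the middle: you propose to apply the Lefschetz trace formula to the $n$-fold iterate of the Hecke \emph{correspondence} and to sum over its fixed points (Drinfeld modules with a $\mathfrak p^n$-self-isogeny). The trace formula proved in this paper --- and the one available for B\"ockle-Pink crystals in general --- is a formula for the \emph{Frobenius} only: it equates the $l$-series $\sum_{n}\sum_{x\in[\mathfrak Y(\bF_{q^n})]}\Tr(\tau^n\mid \widehat{x^*\mathcal F})\,t^n/\#\Aut(x)$ of a crystal with that of its proper pushforward. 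No fixed-point formula for general correspondences is established here, and producing one would be a substantially harder undertaking; moreover, counting "fixed points of $\T_{\mathfrak p}^n$" on the generic fiber leads to an Eichler-Selberg-type computation with orders and class numbers, not the clean sum over points of a special fiber that the theorem ultimately rests on.

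The missing idea is the Eichler-Shimura \emph{relation} $i_{\mathfrak p}^*\T_{\mathfrak p}=\tau^{\deg\mathfrak p}$ on the cusp-form crystal pulled back to the fiber at $\mathfrak p$. This converts $\Tr(\T_{\mathfrak p}^n\mid\S_{k,l})$ into the trace of $\tau^{n\deg\mathfrak p}$ on (the perfection of) $i_{\mathfrak p}^*\underline{\mathcal S}_{k,l}\otimes_A K$, after which the Frobenius trace formula applied to $\mathfrak M_{2,\mathfrak p}\to\Spec(\bF_{\mathfrak p})$ expresses this as a sum over \emph{all} isomorphism classes of Drinfeld modules over $\bF_{\mathfrak p^n}$ (each carries its Frobenius $\pi_\phi=\tau^{n\deg\mathfrak p}$, automatically of characteristic $\mathfrak p$), weighted by $1/\#\Aut(\phi)=-1$ in characteristic $p$. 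Your two flagged difficulties are resolved more cleanly than you anticipate: tameness follows from $\#\Aut(\phi)\equiv-1\pmod p$, and there are no boundary or Eisenstein terms to control because $R^i\Theta_!\underline{\cF}_{k,l}=0$ for $i\neq 1$ (the moduli stack is affine of relative dimension one), so the cohomology is concentrated in degree one.
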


Along the way, we prove several results of independent interest. For good coefficient rings~$B$ \cite[Def. 9.7.4]{BP}, we define $l$-series of $B$-crystals on tame Deligne--Mumford stacks of finite type over~$\bF_q$ 
and show that the following version of the Lefschetz trace formula holds:

\begin{theorem}[Theorem~\ref{thm:trace_formula}]\label{thm:intro2}
    Let $f\: \mathfrak Y \to \mathfrak X$ be a compactifiable morphism of tame Deligne--Mumford stacks of finite type over~$\mathbb F_q$, let~$B$ be reduced, and let~$\underline{\mathcal F}^\bullet$ be a bounded complex of flat $B$-crystals on~$\mathfrak Y$. Then we have
\[
l(\mathfrak Y,\underline{\mathcal F}^\bullet,t) = l(\mathfrak X, Rf_!\underline{\mathcal F}^\bullet,t).
\]
\end{theorem}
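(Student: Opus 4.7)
The plan is to follow the two-step reduction strategy familiar from the Böckle–Pink trace formula on schemes, adapted to deal with the additional stacky and derived-category bookkeeping. First I would show that both sides are multiplicative on distinguished triangles of bounded complexes of flat $B$-crystals; this is essentially formal from the definition of $l$-series as an Euler product of characteristic polynomials, together with the fact that $Rf_!$ is a triangulated functor. This reduces the theorem to the case where $\underline{\mathcal{F}}^\bullet = \underline{\mathcal{F}}$ is a single flat $B$-crystal concentrated in one degree.

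Next, exploiting compactifiability, factor $f = \bar{f}\circ j$ with $j\colon \mathfrak{Y}\hookrightarrow \overline{\mathfrak{Y}}$ an open immersion and $\bar{f}\colon \overline{\mathfrak{Y}}\to\mathfrak{X}$ proper. Since $Rf_! = R\bar{f}_!\circ Rj_!$ and $R\bar{f}_! = R\bar{f}_*$ for proper $\bar{f}$, transitivity of the formula reduces the problem to the two cases: (i) $f$ is an open immersion; (ii) $f$ is proper. For case (i), write $i\colon \mathfrak{Z}\hookrightarrow\overline{\mathfrak{Y}}$ for the complementary closed immersion. The distinguished triangle
\[
Rj_!j^*\underline{\mathcal{F}} \to \underline{\mathcal{F}} \to i_*i^*\underline{\mathcal{F}} \to
\]
combined with multiplicativity reduces the claim to the identity $l(\overline{\mathfrak{Y}},\underline{\mathcal{F}},t) = l(\mathfrak{Y},j^*\underline{\mathcal{F}},t)\cdot l(\mathfrak{Z},i^*\underline{\mathcal{F}},t)$, which should drop out of the definition once one observes that the set of closed points of $\overline{\mathfrak{Y}}$ decomposes as a disjoint union of the closed points of $\mathfrak{Y}$ and of $\mathfrak{Z}$ (with the appropriate automorphism weights on each side).

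The main obstacle is case (ii), the proper case, where the stackiness genuinely enters. My plan is to bootstrap from the Böckle–Pink trace formula on schemes. Choose an \'etale presentation $\pi\colon Y\twoheadrightarrow \overline{\mathfrak{Y}}$ by a scheme; the fiber product $Y\times_{\overline{\mathfrak{Y}}} Y$ gives a groupoid presentation, and every point of $\overline{\mathfrak{Y}}(\mathbb{F}_{q^n})$ is a groupoid whose isomorphism classes correspond to $\pi$-orbits of $Y(\mathbb{F}_{q^n})$. Tameness of $\overline{\mathfrak{Y}}$ guarantees that the orders of the automorphism groups are invertible in suitable sense and, combined with the flatness of $\underline{\mathcal{F}}$, allows one to compute traces on cohomology of the stack as $|\mathrm{Aut}|^{-1}$-weighted sums of traces on cohomology of $Y$. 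After this weighted comparison, the proper case of the stacky formula reduces to the scheme-theoretic formula applied to the composition $Y\to \overline{\mathfrak{Y}}\to \mathfrak{X}$ (or, if necessary, to a simplicial refinement $Y_\bullet \to \overline{\mathfrak{Y}}$), together with proper base change for crystals in order to identify the fibers of $Rf_!\underline{\mathcal{F}}$ at geometric points $x\in \mathfrak{X}(\mathbb{F}_{q^n})$ with the compactly supported cohomology of the corresponding stacky fiber. The delicate point will be verifying that the weights produced by the descent argument are compatible with the characteristic-$p$ subtleties of crystal cohomology (this is where tameness is crucial and where reducedness of $B$ enters, since it legitimises the passage from characteristic polynomials to $l$-series factor by factor).
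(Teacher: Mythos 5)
Your overall architecture (reduce to a single flat crystal, then handle the stacky contribution by comparison with a scheme) is broadly parallel to the paper's, but two points need correction, one of them fatal to the argument as written.

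First, a conceptual slip: the $l$-series of the paper is \emph{additive}, not multiplicative. It is defined as a power series whose coefficients are $\#\Aut$-weighted sums of traces over $[\mathfrak X(\bF_{q^n})]$ (it is $t\,\mathrm{d}\log$ of the $L$-function in the scheme case), so distinguished triangles give sums of $l$-series and the open--closed decomposition gives $l(\overline{\mathfrak Y},\underline{\cF},t)=l(\mathfrak Y,j^*\underline{\cF},t)+l(\mathfrak Z,i^*\underline{\cF},t)$. Relatedly, reducedness of $B$ does not enter where you say it does: the B\"ockle--Pink trace formula for schemes only holds up to a unipotent factor of the $L$-function, i.e.\ up to a nilpotent error in the $l$-series, and reducedness is what kills that error. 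Your proposal silently assumes the scheme-level formula is an equality on the nose.

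The genuine gap is in your treatment of the proper/stacky case. It is \emph{not} true that the isomorphism classes in $\mathfrak Y(\bF_{q^n})$ correspond to orbits of $Y(\bF_{q^n})$ for an \'etale presentation $\pi\colon Y\to\mathfrak Y$: points of the stack over $\bF_{q^n}$ need only lift to $Y$ after a finite extension. The basic counterexample is $\mathfrak Y=BG$ with $G$ constant and $Y=\Spec(\bF_q)$: then $Y(\bF_{q^n})$ is a single point, while $[BG(\bF_{q^n})]$ is the set of conjugacy classes of $G$, the nontrivial classes corresponding to nontrivial $G$-torsors that do not come from $Y(\bF_{q^n})$. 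Consequently the ``$|\Aut|^{-1}$-weighted sum over $\pi$-orbits'' does not compute the left-hand side of the trace formula, and no simplicial refinement of the \'etale cover fixes this, because the missing contributions are traces of the twisted endomorphisms $\tau^n\circ g$ for $g\neq e$, which are invisible on $Y(\bF_{q^n})$. The paper instead passes (after noetherian induction, which lets one shrink to an open substack that is a quotient stack) to the coarse moduli scheme $\mathfrak Y\to Y$, reduces by proper base change to a single fiber, observes that this fiber is a gerbe and hence neutral over a finite field, i.e.\ of the form $BG\to\Spec(\bF_{q^d})$ with $G$ finite \'etale of order prime to $p$, and then verifies the formula for $BG$ by hand: the sum over ($\sigma^n$-twisted) conjugacy classes weighted by stabilizer orders equals $\frac{1}{\#G}\sum_{g\in G}\Tr(\tau^n\circ g\mid M)=\Tr(\tau^n\mid M^G)$, using tameness to invert $\#G$ and to identify $Rs_!\underline{\cF}$ with $(M^G,\tau)$. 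This explicit $BG$ computation is the heart of the proof and is absent from your proposal.
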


Theorem~\ref{thm:intro2} in combination with B\"ockle--Eichler--Shimura theory leads to our third main result, which is a trace formula for Hecke operators on Drinfeld cusp forms.

\begin{theorem}[Theorem~\ref{thm:heckesum}]\label{thm:intro3}
    Let $\S_{k,l}$ denote the space of Drinfeld cusp forms of weight~$k$ and type~$l$, and let $\T_{\mathfrak p}$ be the Hecke operator associated to a prime~$\mathfrak p$ of degree~$d$. Then for every $n \geq 1$, we have
    \[
    \Tr(\T^n_{\mathfrak p} \hspace{0.2em} | \hspace{0.1em} \S_{k+2,l}) = \sum_{[(E,\phi)]/\bF_{{\mathfrak p}^n}} \sum_{i=0}^{k} \pi_{\phi}^{i+l-k-1} \bar{\pi}_{\phi}^{l-1-i},
    \]
    where the first sum is taken over the set of isomorphism classes of Drinfeld modules over~$\bF_{q^{dn}}$ with characteristic~$\mathfrak p$, $\pi_{\phi}$ denotes the Frobenius endomorphism of $(E,\phi)$, and $\bar{\pi}_{\phi}$ its conjugate.
\end{theorem}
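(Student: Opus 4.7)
The plan is to combine B\"ockle's Eichler-Shimura theorem with the trace formula of Theorem~\ref{thm:intro2}, applied over the characteristic-$\mathfrak p$ locus of the moduli stack of Drinfeld modules. First, I would invoke B\"ockle's construction of a flat $A$-crystal $\underline{\mathcal F}_{k,l}$ on the moduli stack $\mathfrak M$ of Drinfeld $A$-modules of rank~$2$ (with auxiliary rigidifying data to guarantee tameness, removed at the end via finite \'etale descent), essentially a type-$l$ twist of $\Sym^k$ of the crystal attached to the universal Drinfeld module, such that $\S_{k+2,l} \cong H^1_c(\mathfrak M, \underline{\mathcal F}_{k,l})$ and the other $H^i_c$'s either vanish or contribute only Eisenstein pieces that can be cleanly excised.

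Next, I would restrict to the characteristic-$\mathfrak p$ locus $\mathfrak M_{\bF_{\mathfrak p}} \subset \mathfrak M$. A second input from B\"ockle is that, on the cohomology of this special fiber, the Hecke operator $\T_{\mathfrak p}$ acts as the geometric $q^d$-power Frobenius of $\mathfrak M_{\bF_{\mathfrak p}}$. Applying Theorem~\ref{thm:intro2} to the structural morphism $f \colon \mathfrak M_{\bF_{\mathfrak p}} \to \Spec \bF_{\mathfrak p}$, which is compactifiable via the Satake-type compactification of the Drinfeld modular variety, yields
\[
l(\mathfrak M_{\bF_{\mathfrak p}}, \underline{\mathcal F}_{k,l}, t) = l(\Spec \bF_{\mathfrak p}, Rf_!\underline{\mathcal F}_{k,l}, t).
\]

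Taking logarithms and comparing coefficients of $t^n/n$ turns this identity of $l$-series into a pointwise trace identity. On the right, I obtain the alternating sum of traces of $\mathrm{Frob}^n$ on $H^*_c(\mathfrak M_{\bar{\bF}_{\mathfrak p}}, \underline{\mathcal F}_{k,l})$, which by the previous step equals $\Tr(\T_{\mathfrak p}^n \mid \S_{k+2,l})$. On the left, I obtain a sum over closed points of $\mathfrak M_{\bF_{\mathfrak p}}$ of degree dividing~$n$ over $\bF_{\mathfrak p}$, i.e., weighted by $1/|\Aut|$, over isomorphism classes of Drinfeld modules $(E,\phi)$ over $\bF_{q^{dn}}$ with characteristic $\mathfrak p$. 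At each such point the stalk of $\underline{\mathcal F}_{k,l}$ is the $\det^{l-k-1}$-twist of $\Sym^k$ of the rank-$2$ crystal of $(E,\phi)$, on which $\mathrm{Frob}^n$ has eigenvalues $\pi_\phi$ and $\bar\pi_\phi$; a direct expansion then yields the fiber contribution $\sum_{i=0}^{k} \pi_\phi^{i+l-k-1}\bar\pi_\phi^{l-1-i}$.

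The main obstacles lie in extracting the two geometric inputs from B\"ockle's work: the identification of $\T_{\mathfrak p}^n$ with $\mathrm{Frob}^n$ on $H^*_c(\mathfrak M_{\bar{\bF}_{\mathfrak p}}, \underline{\mathcal F}_{k,l})$, which is the arithmetic heart of the argument and requires the description of the Hecke correspondence at $\mathfrak p$ in terms of the relative Frobenius isogeny; and the precise bookkeeping of the type-$l$ twist in $\underline{\mathcal F}_{k,l}$ so that the exponents $i+l-k-1$ and $l-1-i$ emerging from the fiber calculation match the statement. One must also verify that the $H^0_c$ and $H^2_c$ contributions cancel or vanish on the relevant locus, so that no extra Eisenstein correction terms survive on the Hecke side.
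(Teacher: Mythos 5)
Your proposal follows essentially the same route as the paper: pull back the weight-$(k+2)$, type-$l$ crystal $\underline{\cF}_{k+2,l}$ (a $\det$-twist of $\Sym^k$ of the crystal of the universal Drinfeld module) to the fiber over $\mathfrak p$, apply the Lefschetz trace formula to its structure map, identify the cohomological side with $\Tr(\T_{\mathfrak p}^n)$ via B\"ockle's Eichler--Shimura isomorphism and relation, and compute the fibers pointwise via the eigenvalues $\pi_\phi,\bar\pi_\phi$ of Frobenius. The concerns you flag are the right ones, and the paper resolves them as you anticipate: the vanishing of $R^i\Theta_!\underline{\cF}_{k,l}$ for $i\neq 1$ is quoted from B\"ockle--Pink, and the Hecke-equals-Frobenius statement is B\"ockle's Eichler--Shimura relation $i_{\mathfrak p}^*\T_{\mathfrak p}=\tau^{\deg\mathfrak p}$ at the level of the crystal $i_{\mathfrak p}^*\underline{\mathcal S}_{k,l}$ (the level-$1$ case is obtained by Galois descent from full level $\mathfrak n$, not by removing an auxiliary rigidification --- the paper shows $\mathfrak M_2$ is already tame, which is why the stacky trace formula is developed in the first place).

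There is, however, one concrete piece of bookkeeping your sketch does not close, and as written your two sides do not match the claimed formula. The geometric side of the $l$-series carries the weight $1/\#\Aut(\phi)$ at each isomorphism class, and the cohomological side carries a sign $(-1)^1$ because the cusp forms sit in $R^1\Theta_!$; so what you actually derive is
\[
\sum_{[(E,\phi)]}\frac{1}{\#\Aut(\phi)}\sum_{i=0}^{k}\pi_\phi^{i+l-k-1}\bar\pi_\phi^{l-1-i} \;=\; -\,\Tr(\T_{\mathfrak p}^n\mid \S_{k+2,l}).
\]
To recover the stated identity one needs the observation (Lemma~\ref{lem:aut_order} in the paper) that $\Aut(\phi)\cong\bF_{q^m}^\times$ for some $m$, so that $\#\Aut(\phi)=q^m-1\equiv -1$ in the characteristic-$p$ coefficient field $K$; the two signs then cancel and the $\Aut$-weights disappear. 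This is not a throwaway remark --- it is the same fact that makes $\mathfrak M_2$ tame --- and without it the weights $1/\#\Aut(\phi)$ would genuinely survive in the formula. You should also be slightly more careful that the identification of the cohomological trace with $\Tr(\T_{\mathfrak p}^n\mid\S_{k+2,l})$ passes through the rigid-analytic fiber at $\infty$ and a duality ($\S_{k,l}^\vee$ rather than $\S_{k,l}$), though this does not affect traces.
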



This paper grew out of the author's licentiate thesis \cite{diva}.

 \subsection*{Outline of the paper}
 In Section~\ref{sec:moduli}, we recall some theory of Drinfeld modules and crystals and extend it to Deligne--Mumford stacks. In Section~\ref{sec:trace_formula}, we define $l$-series of crystals and prove the Lefschetz trace formula. In Section~\ref{sec:ramanujan}, we deduce the trace formula for Hecke operators on Drinfeld modular forms, as well as the Ramanujan bound.

 \subsection*{Notation and conventions}
 Throughout, $p$ denotes a prime number and $q$ a positive power of~$p$. The set of closed points of a scheme~$X$ is denoted by~$|X|$. We denote by~$C$ a smooth, geometrically connected, proper curve over $\bF_q$ with function field~$K = k(C)$. We fix a closed point $\infty \in |C|$ and denote the corresponding valuation on~$K$ by~$v_\infty$. We let $A := \mathcal O_C(C \setminus \{\infty\})$ denote the ring of integers in~$K$. 
 
 For a maximal ideal $\mathfrak p \trianglelefteq A$, we denote by $\mathbb F_{\mathfrak p}$ its residue field, and by $\mathbb F_{\mathfrak p^n}$ its unique degree~$n$ extension. The symbol $\mathfrak p$ never denotes the zero ideal. 
 
 If $k$ is a field, we denote by $\bar{k}$ its algebraic closure. If $V$ is a vector space, we denote by $V^\vee$ its dual. For a ring~$R$, we denote by $\text{Nil}(R)$ the nilradical of~$R$, by $\textsf{Sch}_R$ the category of $R$-schemes, and by $\text{Cent}_R(r)$ the centralizer of an element $r \in R$.
 
 All algebraic stacks are assumed to be noetherian and separated.

\subsection*{Acknowledgements}
I would like to thank Dan Petersen and David Rydh for helpful conversations, and Lucas Mann for answering some questions about his work. I would also like to thank my supervisor Jonas Bergstr\"om and my co-supervisor Olof Bergvall.

\section{Drinfeld modules, \texorpdfstring{$\tau$}{t}-sheaves, and crystals}\label{sec:moduli}
In this section, we recall the basic theory of Drinfeld modules over schemes. We then define the moduli stack of Drinfeld $A$-modules and the category of crystals on a Deligne--Mumford stack, along with its pre-6-functor formalism.

For more thorough treatments of the theory of Drinfeld modules, we refer the reader to~\cite{goss_book,papikian}. For background material on stacks, we refer the reader to~\cite{olsson}. The theory of crystals on schemes developed in~\cite{BP}. The extension to stacks is based on the results from~\cite{mann_thesis}.

\subsection{Drinfeld modules}

Let $S$ be an $\bF_q$-scheme. By a line bundle $E/S$, we mean a commutative $S$-group scheme which is Zariski-locally on $S$ isomorphic to $\mathbb G_{a}$. Denote by $\End_S^{\mathbb F_q}(E)$ the ring of $\bF_q$-linear $S$-group scheme endomorphisms of~$E$. Locally, this ring can be understood as follows.

\begin{proposition}\label{prop:Rtau}
    Let $S = \Spec(R)$ be an affine $\bF_q$-scheme. Denote by $R\{\tau\}$ the \discretionary{non-commu-}{tative}{non-commutative} polynomial ring in $\tau$ satisfying $\tau r = r^q \tau$ for all $r \in R$. Then the map
    \[
    R\{\tau\} \longrightarrow \End_S^{\mathbb F_q}(\bG_{a,S})
    \]
    sending $\tau$ to the $q$-Frobenius endomorphism is a ring isomorphism. \hfill $\Box$
\end{proposition}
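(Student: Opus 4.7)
The plan is to reduce the statement to a classification of $\mathbb{F}_q$-linear additive polynomials over~$R$, together with a direct comparison of their composition law with the twisted multiplication on $R\{\tau\}$.

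First I would identify both sides concretely. On the right, an $\mathbb{F}_q$-linear $S$-group scheme endomorphism of $\mathbb{G}_{a,S} = \Spec(R[x])$ corresponds to an $R$-algebra map $R[x] \to R[x]$, equivalently a polynomial $f(x) \in R[x]$ with $f(0)=0$, satisfying two conditions: additivity $f(x+y) = f(x)+f(y)$ in $R[x,y]$ (compatibility with comultiplication), and $\mathbb{F}_q$-linearity $f(\alpha x) = \alpha f(x)$ in $R[x]$ for all $\alpha \in \mathbb{F}_q$. On the left, an element of $R\{\tau\}$ is a finite sum $\sum_i a_i \tau^i$, and the natural candidate map sends it to the polynomial $\sum_i a_i x^{q^i}$, i.e.\ to the endomorphism given by that formula (which, at $\tau$ itself, is the $q$-Frobenius $x\mapsto x^q$).

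The key step is to show that an $\mathbb{F}_q$-linear additive polynomial over~$R$ is exactly a $q$-polynomial $\sum a_i x^{q^i}$. Writing $f = \sum_i c_i x^i$, the additivity condition gives, after extracting the coefficient of $x^j y^{i-j}$, the relation $\binom{i}{j} c_i = 0$ in~$R$ for all $0 < j < i$. By Lucas's theorem, if $i$ is not a power of~$p$ there exists some $j$ in this range with $\binom{i}{j} \not\equiv 0 \pmod p$; since $R$ is an $\mathbb{F}_q$-algebra, such a binomial coefficient is a unit in~$R$, forcing $c_i = 0$. Hence $f = \sum_i c_{p^i} x^{p^i}$. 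Next, $\mathbb{F}_q$-linearity yields $c_{p^i}(\alpha^{p^i}-\alpha) = 0$ for every $\alpha \in \mathbb{F}_q$; if $p^i$ is not a power of~$q$, one may choose $\alpha$ with $\alpha^{p^i} \neq \alpha$, and the nonzero element $\alpha^{p^i}-\alpha \in \mathbb{F}_q \subseteq R$ is a unit, so $c_{p^i} = 0$. This leaves precisely the $q$-polynomials, so our map is bijective on underlying sets (and clearly $R$-linear).

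It remains to check multiplicativity. A direct computation shows that for $f = \sum_i a_i x^{q^i}$ and $g = \sum_j b_j x^{q^j}$,
\[
(f \circ g)(x) = \sum_{i,j} a_i b_j^{q^i}\, x^{q^{i+j}},
\]
which is the image of the product $\bigl(\sum_i a_i\tau^i\bigr)\bigl(\sum_j b_j \tau^j\bigr)$ in $R\{\tau\}$ under the commutation rule $\tau r = r^q \tau$. Unit and $R$-linearity are clear, so we obtain a ring isomorphism. I expect the mild obstacle to be the classification of additive polynomials over a general $\mathbb{F}_q$-algebra~$R$; the argument via Lucas's theorem is standard but needs the characteristic-$p$ hypothesis and the unit property of nonzero elements of $\mathbb{F}_q$ inside~$R$, both of which are provided here.
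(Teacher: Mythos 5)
Your proof is correct and complete; the paper itself omits the argument (the proposition is stated with a $\Box$, being the classical identification of $\mathbb{F}_q$-linear endomorphisms of $\mathbb{G}_a$ with twisted polynomials, as in Drinfeld and Goss). Your route --- classifying additive polynomials via the vanishing of $\binom{i}{j}c_i$ and Lucas's theorem, cutting down to $q$-powers by $\mathbb{F}_q$-linearity, and matching composition with the twisted product --- is exactly the standard argument the paper implicitly relies on.
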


\begin{definition}
    Let $S$ be an $\mathbb F_q$-scheme.
    
\textbf{1.} A \emph{Drinfeld $A$-module of rank $r \geq 1$ over $S$} is a pair $(E,\phi)$ consisting of a line bundle $E/S$ and an $\bF_q$-algebra homomorphism $\phi\: A \to \End_S^{\mathbb F_q}(E)$ with the following property: for any open subset $\Spec(R) = U \subseteq S$ trivialising $E$ and any $a \in A$, we have
\[
\phi(a)|_U = \sum_{i=0}^{n} \alpha^U_i(a) \tau^i \in R\{\tau\}
\]
with the following properties:
\begin{enumerate}
\item $\alpha^U_i(a) \in R^\times$ for $i = -r \cdot \deg(\infty) \cdot v_\infty(a)$;
\item $\alpha^U_i(a) \in \text{Nil}(R)$ for $i > -r \cdot \deg(\infty) \cdot v_\infty(a)$.
\end{enumerate}

\textbf{2.} A \emph{morphism of Drinfeld modules} $f \: (E,\phi) \to (E',\phi')$ is a morphism $E \to E'$ of $S$-group schemes such that for all $a \in A$, the following diagram commutes:
\[
\begin{tikzpicture}
\node(A){$E$};
\node at (2,0)(B){$E$};
\node[below of =A](C){$E'$};
\node[below of =B](D){$E'$};
\draw[->](A) to node{$\phi(a)$}(B);
\draw[->](A) to node[swap]{$f$}(C);
\draw[->](B) to node{$f$}(D);
\draw[->](C) to node{$\phi'(a)$}(D);
\end{tikzpicture}
\]
\end{definition}

\begin{definition}\label{def:char_morphism}
Let $(E/S,\phi)$ be a Drinfeld $A$-module. The \emph{characteristic morphism} $\vartheta \: S \to \Spec(A)$ of $\phi$ is defined via the map on sections
\[
\theta^\sharp \: A \xrightarrow{\ \phi \ } \End_S^{\mathbb F_q}(E) \xrightarrow{\ D \ } H^0(S,\mathcal O_S),
\]
where $D$ is the map which locally sends $\sum{\alpha_i\tau^i} \mapsto \alpha_0$.
\end{definition}

\begin{definition}
    Fix an integer $r \geq 1$. The \emph{moduli stack of Drinfeld $A$-modules of rank~$r$} is the category fibered in groupoids $\mathfrak M_r^A \to \mathbb F_q$ whose objects are
    \[
    \text{Ob}(\mathfrak M_r^A) = \left \{(E/S,\phi) \ | \ S \in \textsf{Sch}_{\mathbb F_q}, \ \phi\: A \to \End_S(E) \text{ is a Drinfeld $A$-module of rank }r \right \},
    \]
    and whose morphisms $(E'/S',\phi') \to (E/S,\phi)$ are given by pullback diagrams
    \[
    \begin{tikzpicture}
        \node(A){$E'$};
        \node(X) at (0.45,-0.45){\scalebox{1.5}{$\lrcorner$}};
        \node[right of =A](B){$E$};
        \draw[->](A) to (B);
        \node(C)[below of =A]{$S'$};
        \draw[->](A) to (C);
        \node(D)[below of =B]{$S$};
        \draw[->](B) to (D);
        \draw[->](C) to (D);
    \end{tikzpicture}
    \]
    in the category of Drinfeld modules.
\end{definition}

    We will often suppress the ring $A$ from the notation and simply write $\mathfrak M_r$ for the stack defined above. The characteristic morphism induces a structure map $\Theta \: \mathfrak M_r \to \Spec(A)$. The fibered category $\mathfrak M_r$ is representable by a smooth affine Deligne--Mumford stack of pure relative dimension $r-1$ over~$A$ \cite[Cor. 1.4.3]{laumon}, which we also denote by $\mathfrak M_r$. If $r=2$, it can be thought of as the function field analogue of the modular curve $\mathcal M_{1,1} \to \Spec(\mathbb Z)$.
    
    The following lemma shows that $\mathfrak M_r$ is tame, i.e., for every geometric point~$x$ of~$\mathfrak M_r$, the order of the automorphism group of~$x$ is prime to~$p$.

\begin{lemma}\label{lem:aut_order}
    Let $\phi$ be a Drinfeld $A$-module of rank~$r$ over a field~$k$ of characteristic~$p$. Then $\# \Aut(\phi) \equiv -1 \pmod{p}$.
\end{lemma}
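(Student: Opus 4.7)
The plan is to compute $\Aut(\phi)$ explicitly as a subgroup of $k^\times$ and then count. Since $k$ is a field, the line bundle $E$ underlying $\phi$ is globally isomorphic to $\bG_{a,k}$ (Zariski covers of $\Spec(k)$ are trivial), so by Proposition~\ref{prop:Rtau} the ring $\End_k^{\bF_q}(E)$ is identified with the twisted polynomial ring $k\{\tau\}$, and $\phi$ becomes a ring homomorphism $A \to k\{\tau\}$. The first step is to pin down which elements of $k\{\tau\}$ correspond to group scheme automorphisms: an additive polynomial $\sum_i a_i X^{q^i}$ gives a scheme isomorphism of $\bG_{a,k}$ precisely when it is linear with non-vanishing coefficient, so $\Aut(\bG_{a,k}) = k^\times$, embedded in $k\{\tau\}$ as the constants. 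Consequently $\Aut(\phi)$ is the subgroup of those $c \in k^\times$ that commute in $k\{\tau\}$ with every $\phi(a)$.

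The second step is to make the commutation condition explicit. Writing $\phi(a) = \sum_i \alpha_i(a)\tau^i$ and using the identity $\tau^i c = c^{q^i} \tau^i$, one gets
\[
c\,\phi(a) - \phi(a)\,c \;=\; \sum_i \alpha_i(a)\bigl(c - c^{q^i}\bigr)\tau^i,
\]
so $c$ centralises $\phi(A)$ if and only if $c = c^{q^i}$ for every $i$ in the set $I := \{i \geq 1 : \alpha_i(a) \neq 0 \text{ for some } a \in A\}$. Setting $d := \gcd(I)$, this is equivalent to $c \in \bF_{q^d}^\times$; intersecting with $k^\times$ gives $\Aut(\phi) = \bF_{q^e}^\times$ for some integer $e \geq 1$. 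Since $q$ is a power of~$p$, it then follows that $\#\Aut(\phi) = q^e - 1 \equiv -1 \pmod{p}$, as required.

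The only point needing separate attention — and the one I expect to be the main (minor) obstacle — is the non-emptiness of $I$: if $I$ were empty, $c$ would be unconstrained and we would only recover $\#\Aut(\phi) = \lvert k^\times\rvert$, which has no reason to satisfy the claimed congruence. This is guaranteed by the rank-$r \geq 1$ hypothesis: for any non-constant $a \in A$ one has $v_\infty(a) < 0$, and the definition of a Drinfeld module forces $\alpha_i(a)$ to be a unit at the index $i = -r \cdot \deg(\infty) \cdot v_\infty(a) > 0$. Hence $I$ contains this positive~$i$, and the argument goes through.
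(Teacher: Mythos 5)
Your proof is correct and follows essentially the same route as the paper's: both identify $\Aut(\phi)$ with the subgroup of constants $c \in k^\times$ centralising $\phi(A)$ inside $k\{\tau\}$, show this subgroup is $\bF_{q^e}^\times$ for some $e \geq 1$, and conclude $q^e - 1 \equiv -1 \pmod{p}$. Your explicit check that the index set $I$ is non-empty (via the unit leading coefficient at $i = -r\deg(\infty)v_\infty(a) > 0$ for non-constant $a$) is exactly the point the paper leaves implicit in asserting $n_a \geq 1$.
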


\begin{proof}
    This follows because for some integers $1 \leq n_a \leq -r \deg(\infty) v_\infty(a)$, we have
    \[
    \Aut(\phi) = \bigcap_{a \in A} \text{Cent}_{k\{\tau\}}(\phi(a))^\times = \bigcap_{a \in A} \mathbb F_{q^{n_a}}^\times = \mathbb F_{q^{\gcd\{n_a | a \in A\}}}^\times.
    \]
\end{proof}

\begin{definition}
    A \emph{Drinfeld $A$-module of rank $r$} over a Deligne--Mumford stack $\mathfrak X$ is a morphism of stacks $\mathfrak X \to \mathfrak M_r^A$.
\end{definition}

\subsection{\texorpdfstring{$\tau$}{t}-sheaves and crystals}

Having defined Drinfeld modules over stacks, we now want to associate crystals to them. We start by defining $\tau$-sheaves and crystals over stacks.

\begin{definition}
    Let $B$ be an $\mathbb F_q$-algebra. We denote by $\textsf{Coh}_B \to \bF_q$ the category fibered in groupoids whose objects are
     \[
    \text{Ob}(\textsf{Coh}_B) = \left\{ (\mathcal F, S) \ | \ S \in \textsf{Sch}_{\bF_q}, \hspace{0.15em} \mathcal F \text{ is a coherent Zariski sheaf of } \mathcal O_{S \otimes B} \text{-modules} \right\},
    \]
    and whose morphisms are given by
    \[
    \Hom(\mathcal F'/S',\mathcal F/S) = \{ (s,t) \ | \ s\: S' \to S, \hspace{0.2em} t\: \mathcal F' \xrightarrow{\ \sim \ } (s\times \text{id})^*\mathcal F \}.
    \]
    We call $\textsf{Coh}_B$ the \emph{stack of coherent sheaves with coefficients in~$B$}. The category $\textsf{QCoh}_B \to \mathbb F_q$ of \emph{quasi-coherent sheaves with coefficients in~$B$} is defined as above with the word ``coherent" replaced by ``quasi-coherent".
\end{definition}

The categories $\textsf{Coh}_B$ and $\textsf{QCoh}_B$ are stacks (although they are not algebraic). Setting $B = \mathbb F_q$ recovers the usual stack of (quasi-)coherent sheaves. If $S$ is a scheme, an isomorphism class of maps $S \to \textsf{Coh}_B$ is equivalent to a coherent sheaf on $S \otimes B$.

For a Deligne--Mumford stack $\mathfrak X$, denote by $\sigma = \sigma_{\mathfrak X}$ the $q$-Frobenius endomorphism of $\mathfrak X$.

\begin{definition}
A \emph{(quasi-)coherent $\tau$-sheaf on~$\mathfrak X$ over~$B$} is a pair $\underline{\mathcal F} = (\mathcal F,\tau_{\mathcal F})$ consisting of the following data:
\begin{itemize}
    \item A (quasi-)coherent sheaf $\mathcal F \colon \mathfrak X \to \textsf{Coh}_B$ (resp.\ $\textsf{QCoh}_B$);
    \item An $\mathcal O_{\mathfrak X \otimes B}$-linear map $\tau_{\mathcal F} \colon \sigma^*\mathcal F \to \mathcal F$. 
\end{itemize}


A \emph{morphism of $\tau$-sheaves} $\underline{\mathcal F} \to \underline{\mathcal G}$ is a natural transformation $\psi \colon \mathcal F \to \mathcal G$ such that $\psi \circ \tau_{\mathcal F} = \tau_{\mathcal G} \circ \sigma^*\psi$. The category of coherent $\tau$-sheaves on~$\mathfrak X$ over~$B$ is denoted $\textsf{Coh}_\tau(\mathfrak X,B)$.
\end{definition}

\begin{definition}\label{def:nil-iso}
A $\tau$-sheaf $\underline{\mathcal F} \in \textsf{Coh}_\tau(\mathfrak X,B)$ is called \emph{nilpotent} if some power $\tau^n_{\mathcal F} \colon (\sigma^n)^*\mathcal F \to \mathcal F$ of $\tau_{\mathcal F}$ is zero. We denote the full subcategory of nilpotent coherent $\tau$-sheaves by $\textsf{NilCoh}_\tau(\mathfrak X,B)$. A morphism $f \: \underline{\mathcal F} \to \underline{\mathcal G}$ of $\tau$-sheaves is called a \emph{nil-isomorphism} if $\ker(f)$ and $\coker(f)$ are nilpotent $\tau$-sheaves.
\end{definition}

\begin{proposition}
Let $\mathfrak X$ be a Deligne--Mumford stack. Then $\textsf{NilCoh}_\tau(\mathfrak X,B)$ is a Serre subcategory of $\textsf{Coh}_\tau(\mathfrak X,B)$.
\end{proposition}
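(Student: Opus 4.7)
The plan is to verify directly that $\textsf{NilCoh}_\tau(\mathfrak X,B)$ satisfies the three Serre closure conditions — closure under subobjects, quotients, and extensions — inside the abelian category $\textsf{Coh}_\tau(\mathfrak X,B)$. Throughout I fix a short exact sequence $0 \to \underline{\mathcal F}' \xrightarrow{i} \underline{\mathcal F} \xrightarrow{p} \underline{\mathcal F}'' \to 0$ in $\textsf{Coh}_\tau(\mathfrak X,B)$ and exploit two standard ingredients: the naturality squares expressing that $i$ and $p$ are morphisms of $\tau$-sheaves, and the right exactness of the Frobenius pullback $\sigma^*$.

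For sub- and quotient-closure, assume $\tau_{\mathcal F}^n = 0$. The naturality identity $i \circ \tau_{\mathcal F'}^n = \tau_{\mathcal F}^n \circ \sigma^{*n} i = 0$, together with injectivity of $i$, forces $\tau_{\mathcal F'}^n = 0$. Dually, $\tau_{\mathcal F''}^n \circ \sigma^{*n} p = p \circ \tau_{\mathcal F}^n = 0$; since $\sigma^*$ is right exact, $\sigma^{*n} p$ is surjective, whence $\tau_{\mathcal F''}^n = 0$.

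For extension-closure, suppose $\tau_{\mathcal F'}^m = 0$ and $\tau_{\mathcal F''}^k = 0$; the claim is that $\tau_{\mathcal F}^{k+m} = 0$. Naturality along $p$ gives $p \circ \tau_{\mathcal F}^k = \tau_{\mathcal F''}^k \circ \sigma^{*k} p = 0$, so $\tau_{\mathcal F}^k$ factors through $\ker p = \im i$ as $\tau_{\mathcal F}^k = i \circ \alpha$ for some $\alpha \: \sigma^{*k}\mathcal F \to \mathcal F'$. Using the telescoping identity $\tau_{\mathcal F}^{k+m} = \tau_{\mathcal F}^m \circ \sigma^{*m}(\tau_{\mathcal F}^k)$ and naturality of $\tau$ along $i$, one computes
\[
\tau_{\mathcal F}^{k+m} \;=\; \tau_{\mathcal F}^m \circ \sigma^{*m}(i) \circ \sigma^{*m}(\alpha) \;=\; i \circ \tau_{\mathcal F'}^m \circ \sigma^{*m}(\alpha) \;=\; 0.
\]

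The one potential pitfall is that $\sigma^*$ is not left exact in general — on a Deligne-Mumford stack over $\bF_q$ the $q$-Frobenius need not be flat — but the argument has been arranged precisely so that left exactness is never needed: injectivity of $i$ itself is enough for the sub-closure step, and only right exactness of $\sigma^*$ is used in the quotient and extension steps. Since the zero object lies in $\textsf{NilCoh}_\tau(\mathfrak X,B)$ and the subcategory is stable under isomorphisms by inspection, this completes the verification.
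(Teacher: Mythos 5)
Your proof is correct, but it takes a different route from the paper. The paper does not redo the diagram chase at all: it invokes the scheme case from B\"ockle--Pink \cite[Prop.\ 3.3.5]{BP} and then descends to the stack via Corollary~\ref{cor:tau_pullback}, which says that pullback along an \'etale cover $\pi\colon X \to \mathfrak X$ is exact, conservative, and detects nilpotence; a short exact sequence on $\mathfrak X$ is thus pulled back to the cover, where the closure properties are already known, and nilpotence is transported back down. Your argument instead verifies the three Serre axioms directly by manipulating the structure maps $\tau^n$, and it is sound: the sub-closure step only needs $i$ to be a monomorphism of underlying sheaves, the quotient step only needs right exactness of $\sigma^*$ (which holds since $\sigma^*$ is a left adjoint), and the extension step correctly uses the factorisation of $\tau_{\mathcal F}^k$ through $\ker p = \im i$ together with the telescoping identity $\tau^{k+m} = \tau^m \circ (\sigma^m)^*\tau^k$. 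You are also right to flag, and then circumvent, the failure of left exactness of $\sigma^*$ --- this is the genuine subtlety in the theory. The one point you lean on implicitly is that kernels, cokernels, and images in $\textsf{Coh}_\tau(\mathfrak X,B)$ are computed on the underlying coherent sheaves (so that ``$i$ injective'' and ``$\ker p = \im i$'' may be read at the sheaf level); this is standard and true, but worth a sentence. The trade-off between the two approaches: yours is self-contained and in effect reproves the B\"ockle--Pink scheme statement uniformly for stacks, at the cost of a diagram chase; the paper's is shorter and illustrates the descent formalism it has just set up, which is reused repeatedly later, but it depends on Corollary~\ref{cor:tau_pullback} being available (the paper indeed cites it as ``below'', a mild forward reference that your argument avoids).
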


\begin{proof}
We need to show that $\textsf{NilCoh}_\tau$ is closed under taking subobjects, quotients, extensions, and isomorphisms. By \cite[Prop.\ 3.3.5]{BP}, this is true whenever $\mathfrak X$ is a scheme. We conclude by applying Corollary \ref{cor:tau_pullback} below.
\end{proof}

\begin{definition}
    The category $\textsf{Crys}(\mathfrak X,B)$ of \emph{$B$-crystals on~$\mathfrak X$} is the localisation of $\textsf{Coh}_\tau(\mathfrak X,B)$ at $\textsf{NilCoh}_\tau$.
\end{definition} 

More concretely, the objects in $\textsf{Crys}(\mathfrak X,B)$ are coherent $\tau$-sheaves on~$\mathfrak X$ over~$B$, and morphisms $\underline{\mathcal F} \to \underline{\mathcal G}$ are given by roofs, i.e., diagrams of the form
\[
\underline{\mathcal F} \longleftarrow \underline{\mathcal Z} \longrightarrow \underline{\mathcal G} 
\]
where $\underline{\mathcal F} \longleftarrow \underline{\mathcal Z}$ is a nil-isomorphism and $\underline{\mathcal Z} \longrightarrow \underline{\mathcal G}$ is any morphism of coherent $\tau$-sheaves.

\begin{remark}
    Every morphism of crystals $\underline{\mathcal F} \to \underline{\mathcal G}$ can be represented by a roof of the form
    \[
    \underline{\mathcal F} \xleftarrow{\hspace{0.1em} \tau^n \hspace{0.1em}} (\sigma^{n})^*\underline{\mathcal F} \longrightarrow \underline{\mathcal G};
    \]
    when $\mathfrak X$ is a scheme, this is \cite[Prop.\ 3.4.6]{BP}, and in general it follows from the descent results in \textsection \ref{sec:descent}. Thus, passing from $\tau$-sheaves to crystals is equivalent to inverting $\tau$. 
\end{remark}

One similarly defines $\textsf{QCrys}(\mathfrak X,B)$ as the localisation of $\textsf{QCoh}_\tau(\mathfrak X,B)$ at \emph{locally nilpotent} $\tau$-sheaves; see~\cite[\textsection 3.3]{BP}. This ensures that $\textsf{Crys}(\mathfrak X,B)$ is a full subcategory of~$\textsf{QCrys}(\mathfrak X,B)$.

\subsection{The \texorpdfstring{$\tau$}{t}-sheaf associated to a Drinfeld module}\label{sec:assoc-tau-sheaf}

Consider the map $\Hom(-,\mathbb G_a) \colon \mathfrak M^A_r \to \textsf{Coh}_A$, which acts on objects as
\[
(E/S,\phi) \longmapsto \mathcal{H}om_{S\textsf{-Grp}}^{\mathbb F_q}(E,\mathbb G_a),
\]
where the right-hand side denotes the coherent sheaf of $\mathcal O_{S \otimes A}$-modules of $\mathbb F_q$-linear $S$-group scheme morphisms $E \to \mathbb G_a$, where $a \in A$ acts via right multiplication by~$\phi(a)$. The map on morphisms is induced by precomposition. By Proposition~\ref{prop:Rtau}, the coherent sheaf $\mathcal{H}om_{S\textsf{-Grp}}^{\mathbb F_q}(E,\mathbb G_a)$ is Zariski-locally isomorphic to $\mathcal O_S\{\tau\}$, which is locally free of rank $r$ over $\mathcal O_{S \otimes A}$ \cite[Prop.~3]{drinf_comm-subrings}.



\begin{construction}\label{constr:ass-tau}
    Given a Drinfeld module $(\mathcal E,\phi)\: \mathfrak X \to \mathfrak M_r$, we associate a $\tau$-sheaf $\underline{\cM}(\phi)$ to it as follows: the underlying sheaf is the composition
    \[
        \mathcal M(\phi): \mathfrak X \xrightarrow{(\mathcal E,\phi)} \mathfrak M_r \xrightarrow{\Hom(-,\mathbb G_a)} \textsf{Coh}_A,
    \]
    and the map $\tau = \tau_{\mathcal M(\phi)}$ is the natural transformation $\sigma^* \mathcal M(\phi) \to \mathcal M(\phi)$ given by composition with $\sigma_{\mathbb G_a}$ on the left, where $\sigma$ denotes the $q$-Frobenius endomorphism.
\end{construction}

\begin{example}\label{ex:Mphi}
    Suppose $\mathfrak X = \Spec(k)$ for some finite extension $k = \mathbb F_{q^n}$ of $\bF_q$. Then $\mathcal E \cong \mathbb G_a$, and the underlying coherent sheaf of~$\cM(\phi)$ is given by $\End_{k\textsf{-Grp}}^{\mathbb F_q}(\mathbb G_a) \cong k\{\tau\}$, cf.\ Proposition~\ref{prop:Rtau}. The left action of $\alpha \otimes a \in k \otimes A$ on $\psi \in k\{\tau\}$ is given by $(\alpha \otimes a) \star \psi = \alpha \psi \phi(a)$. 
    The morphism $\tau\: \sigma^* k\{\tau\} \to k\{\tau\}$ is given by multiplication by $\tau$ on the left, which is indeed a module homomorphism by the commutation relation in $k\{\tau\}$. 
    We conclude this example by noting that the element~$\tau^n$ is the Frobenius endomorphism of~$(\mathcal E,\phi)$.
\end{example}

\subsection{Descent}\label{sec:descent}

A (quasi-)coherent sheaf on a Deligne--Mumford stack is equivalent to a (quasi-)coherent sheaf with a descent datum on an \'etale cover. In this subsection, we prove that the same is true for crystals. Throughout, $\mathfrak X$ denotes a Deligne--Mumford stack over~$\bF_q$, and $\pi\: X \to \mathfrak X$ denotes an \'etale cover by a scheme.

\begin{definition}
    Let $\textsf{C}(-) \in \{\textsf{Coh}_\tau(-,B), \textsf{QCoh}_\tau(-,B), \textsf{Crys}(-,B), \textsf{QCrys}(-,B)\}$. Define the \emph{category of descent data} $\textsf{C}(X \to \mathfrak X)$ as follows: objects are pairs $(\underline{\cF},\psi)$ consisting of an object $\underline{\cF} \in \textsf{C}(X)$ and a descent datum $\psi \: \text{pr}_1^*\underline{\cF} \xrightarrow{\sim} \text{pr}_2^*\underline{\cF}$ in $\Aut_{\textsf{C}}(X \times_{\mathfrak X} X)$ which satisfies the cocycle condition: i.e., we require that the diagram
    \[
        \begin{tikzpicture}
            \node(A){$\text{pr}_{1,2}^*\text{pr}_1^*\underline{\cF}$};
            \node(B) at (1.8,1.5){$\text{pr}_{1,3}^*\text{pr}_1^*\underline{\cF}$};
            \node(C) at (5,1.5){$\text{pr}_{1,3}^*\text{pr}_2^*\underline{\cF}$};
            \node(D) at (6.8,0){$\text{pr}_{2,3}^*\text{pr}_2^*\underline{\cF}$};
            \node(E) at (1.8,-1.5){$\text{pr}_{1,2}^*\text{pr}_2^*\underline{\cF}$};
            \node(F) at (5,-1.5){$\text{pr}_{2,3}^*\text{pr}_1^*\underline{\cF}$};
            \draw [double equal sign distance] (A) to (B);
            \draw [double equal sign distance] (C) to (D);
            \draw [double equal sign distance] (E) to (F);
            \draw[->](B) to node{$\text{pr}_{1,3}^*\psi$}(C);
            \draw[->](A) to node{$\text{pr}_{1,2}^*\psi$}(E);
            \draw[->](F) to node{$\text{pr}_{2,3}^*\psi$}(D);
        \end{tikzpicture}
    \]
    commutes. A morphism $(\underline{\cF},\psi) \to (\underline{\cF}',\psi')$ is a map $f\: \underline{\cF} \to \underline{\cF}'$ in $\textsf{C}(X)$ such that $\text{pr}_2^*f \circ \psi = \psi' \circ \text{pr}_1^*f$.
\end{definition}

Note that $\pi^*$ induces a functor from $\textsf{C}(\mathfrak X)$ to $\textsf{C}(X \to \mathfrak X)$. The goal of this section is to show this is an equivalence in all cases; in other words, $\tau$-sheaves and crystals satisfy effective descent with respect to the \'etale topology.

\begin{proposition}[Descent for $\tau$-sheaves]\label{prop:tau_descent}
    Let $\textsf{C}(-) \in \{\textsf{Coh}_\tau(-,B), \textsf{QCoh}_\tau(-,B)\}$. Then $\pi^*$ induces an equivalence of categories $\textsf{C}(\mathfrak X) \xrightarrow{\sim} \textsf{C}(X \to \mathfrak X)$.
\end{proposition}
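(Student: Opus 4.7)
The plan is to bootstrap the result from the corresponding descent statement for (quasi-)coherent sheaves on Deligne-Mumford stacks, using the naturality of Frobenius. First, since $\textsf{Coh}_B$ and $\textsf{QCoh}_B$ are stacks for the étale topology (the case $B = \mathbb F_q$ is classical and the general case follows because a (quasi-)coherent sheaf on $S \otimes B$ is the same datum as a morphism $S \to \textsf{Coh}_B$, cf.\ the discussion after the definition of $\textsf{Coh}_B$), the pullback $\pi^*$ induces an equivalence between $\textsf{Coh}(\mathfrak X, B)$ (resp.\ $\textsf{QCoh}(\mathfrak X,B)$) and the category of descent data for the underlying sheaves. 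So the entire content of the proposition is that the $\tau$-structure descends as well.

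Next I would exploit the naturality of the Frobenius endomorphism: for any morphism $f\colon \mathfrak Y \to \mathfrak Z$ of $\bF_q$-stacks one has $f \circ \sigma_{\mathfrak Y} = \sigma_{\mathfrak Z} \circ f$, and in particular there is a canonical isomorphism $f^* \sigma_{\mathfrak Z}^* \cong \sigma_{\mathfrak Y}^* f^*$. Applied to $\pi$ and to the two projections $\text{pr}_i\colon X \times_{\mathfrak X} X \to X$, this yields canonical identifications
\[
\pi^* \sigma_{\mathfrak X}^* \cong \sigma_X^* \pi^*, \qquad \text{pr}_i^* \sigma_X^* \cong \sigma_{X \times_{\mathfrak X} X}^* \text{pr}_i^*.
\]
Consequently, given a descent datum $(\underline{\mathcal F}, \psi)$ with $\underline{\mathcal F} = (\mathcal F, \tau_{\mathcal F})$ on $X$, the isomorphism $\sigma^*\psi\colon \text{pr}_1^* \sigma^*\mathcal F \xrightarrow{\sim} \text{pr}_2^* \sigma^*\mathcal F$ is a descent datum for $\sigma^*\mathcal F$, and the cocycle condition for $\sigma^*\psi$ is obtained by applying $\sigma^*$ to the cocycle square for $\psi$.

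Now I would descend $\tau_{\mathcal F}$ itself. By effective descent for (quasi-)coherent sheaves, the underlying sheaf $\mathcal F$ descends to some $\mathcal F_0$ on $\mathfrak X$, and by the previous paragraph $\sigma^*\mathcal F$ descends to $\sigma_{\mathfrak X}^*\mathcal F_0$. The condition that $\psi$ is a morphism in $\textsf{Coh}_\tau(X\times_{\mathfrak X} X, B)$ — built into the definition of the category of descent data when $\textsf{C} = \textsf{Coh}_\tau$ — is exactly the equation $\text{pr}_2^*\tau_{\mathcal F} \circ \sigma^*\psi = \psi \circ \text{pr}_1^*\tau_{\mathcal F}$, i.e., the statement that $\tau_{\mathcal F}$ is compatible with the two descent data on $\sigma^*\mathcal F$ and on $\mathcal F$. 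By descent for morphisms of (quasi-)coherent sheaves, this means $\tau_{\mathcal F}$ is the pullback of a unique map $\tau_0\colon \sigma_{\mathfrak X}^*\mathcal F_0 \to \mathcal F_0$. Setting $\underline{\mathcal F_0} := (\mathcal F_0, \tau_0)$ gives an object of $\textsf{C}(\mathfrak X)$ with $\pi^*\underline{\mathcal F_0} \cong \underline{\mathcal F}$ as descent data, proving essential surjectivity.

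For full faithfulness, a morphism of descent data $(\underline{\mathcal F},\psi) \to (\underline{\mathcal F}',\psi')$ is, by definition, a morphism of $\tau$-sheaves on $X$ intertwining $\psi$ and $\psi'$; ordinary sheaf descent produces a unique underlying morphism $f_0\colon \mathcal F_0 \to \mathcal F_0'$ on $\mathfrak X$, and the equation $f \circ \tau_{\mathcal F} = \tau_{\mathcal F'} \circ \sigma^* f$ on $X$ descends (again by uniqueness in sheaf descent, applied to $\sigma_{\mathfrak X}^*\mathcal F_0 \to \mathcal F_0'$) to the same equation for $f_0$ on $\mathfrak X$. The main (mild) obstacle is purely bookkeeping: one must keep the canonical isomorphisms $\text{pr}_i^*\sigma_X^* \cong \sigma^*\text{pr}_i^*$ explicit throughout, and verify that the cocycle and compatibility squares written down in the category of descent data really transport via these canonical isomorphisms to the ones needed for the ordinary descent statement. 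Once this is clear, no further argument is required.
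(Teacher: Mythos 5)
Your proposal is correct and follows essentially the same route as the paper: reduce to \'etale descent for the underlying (quasi-)coherent sheaves, use the compatibility of the $q$-Frobenius with $\pi$ and the projections to identify $\sigma^*\psi$ as a descent datum for $\sigma^*\mathcal F$, and observe that the condition that $\psi$ be a morphism of $\tau$-sheaves is exactly the compatibility needed to descend $\tau_{\mathcal F}$ (and likewise for morphisms). The paper's proof is just a terser version of your argument, leaving the Frobenius bookkeeping implicit.
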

\begin{proof}
    This follows from descent for (quasi-)coherent sheaves and unwinding definitions. In particular, a morphism $\tau_{\cF} \colon \sigma^*\cF \to \cF$ of (quasi-)coherent sheaves on $X$ descends to $\mathfrak X$ if and only if it is compatible with the descent data $\psi$ and $\sigma^*\psi$, which is precisely the condition that $\psi$ is a morphism of $\tau$-sheaves. Similarly, a morphism of $\tau$-sheaves on $X$ descends to $\mathfrak X$ if and only if it is compatible with the descent data.
\end{proof}


\begin{corollary}\label{cor:tau_pullback}
    The pullback functor $\pi^*\colon \textsf{Coh}_\tau(\mathfrak X,B) \to \textsf{Coh}_\tau(X,B)$ is exact and conservative. Moreover, $\underline{\mathcal F}$ is nilpotent if and only if $\pi^*\underline{\mathcal F}$ is nilpotent.
\end{corollary}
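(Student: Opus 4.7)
The plan is to reduce every claim to Proposition~\ref{prop:tau_descent} together with standard \'etale descent on the underlying (quasi-)coherent sheaves. Throughout I use two facts that are essentially built in: $\pi^*$ commutes with $\sigma^*$ up to canonical natural isomorphism (because $\pi \circ \sigma_X = \sigma_{\mathfrak X} \circ \pi$), so $\tau_{\pi^*\mathcal F} = \pi^*\tau_{\mathcal F}$; and kernels and cokernels in $\textsf{Coh}_\tau$ are computed on the underlying coherent sheaves with the induced $\tau$-structure.

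First I would settle exactness. Since $\pi$ is \'etale and hence flat, the pullback functor on coherent $\mathcal O_{\mathfrak X \otimes B}$-modules is exact, and exactness of a sequence of $\tau$-sheaves is tested on the underlying sheaves, so $\pi^*\colon \textsf{Coh}_\tau(\mathfrak X,B) \to \textsf{Coh}_\tau(X,B)$ is exact. For conservativity, suppose $f\colon \underline{\mathcal F} \to \underline{\mathcal G}$ is a morphism of $\tau$-sheaves with $\pi^*f$ an isomorphism. By exactness, $\pi^*\ker f = \ker(\pi^*f) = 0$ and $\pi^*\coker f = \coker(\pi^*f) = 0$; since $\pi$ is a cover, hence faithfully flat, pullback of coherent sheaves is faithful, so $\ker f = 0 = \coker f$ and $f$ is an isomorphism. (Equivalently, this is immediate from the full faithfulness part of Proposition~\ref{prop:tau_descent}.)

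For the nilpotency statement, the compatibility $\tau_{\pi^*\mathcal F}^n = \pi^*(\tau_{\mathcal F}^n)$ makes the forward direction automatic: if $\tau_{\mathcal F}^n = 0$, then $\tau_{\pi^*\mathcal F}^n = 0$. Conversely, if $\pi^*\underline{\mathcal F}$ is nilpotent, then $\pi^*(\tau_{\mathcal F}^n) = 0$ for some $n$, and faithfulness of $\pi^*$ on coherent sheaves forces $\tau_{\mathcal F}^n = 0$. The only point requiring any care is the identification $\pi^* \sigma^* \cong \sigma^* \pi^*$ that makes $\pi^*$ a well-defined functor on $\tau$-sheaves in the first place; once this naturality of the absolute Frobenius is invoked, the whole corollary is a direct consequence of faithful flatness plus Proposition~\ref{prop:tau_descent}, and I do not foresee any genuine obstacle.
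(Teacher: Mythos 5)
Your proof is correct. It reaches the same conclusion as the paper but by a slightly different mechanism: the paper invokes the descent equivalence of Proposition~\ref{prop:tau_descent} to describe $\pi^*$ as the functor $(\underline{\mathcal F},\psi)\mapsto\underline{\mathcal F}$ that forgets the descent datum, after which exactness, conservativity, and detection of nilpotence are all immediate because kernels, cokernels, invertibility, and the vanishing of $\tau^n$ visibly do not depend on the datum. You instead work directly with the underlying coherent sheaves and use that $\pi$, being an \'etale cover, is faithfully flat, so that pullback is exact and faithful; the nilpotence claim then follows from the compatibility $\tau^n_{\pi^*\mathcal F}=\pi^*(\tau^n_{\mathcal F})$, which you rightly flag as the one point needing the commutation of $\pi$ with the absolute $q$-Frobenius. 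The two routes are close cousins (both ultimately rest on descent along the atlas, and your parenthetical appeal to full faithfulness of $\pi^*$ into the category of descent data is essentially the paper's argument), but yours is marginally more self-contained in that it does not need the full equivalence of Proposition~\ref{prop:tau_descent} for the exactness and conservativity claims, only flatness and surjectivity of $\pi$; the paper's version is shorter given that the descent proposition has already been established, and sidesteps any discussion of how exactness in $\textsf{Coh}_\tau(\mathfrak X,B)$ is tested. No gaps.
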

\begin{proof}
    Using the equivalence from Proposition \ref{prop:tau_descent}, we can describe $\pi^*$ as the functor sending $(\underline{\cF},\psi) \mapsto \underline{\cF}$. Then $\pi^*$ is exact because forgetting the descent datum commutes with taking kernels and cokernels. It is conservative because the property of a morphism of $\tau$-sheaves on $\mathfrak X$ being an isomorphism does not depend on the descent datum. Similarly, a $\tau$-sheaf is nilpotent if and only if $\tau^n = 0$ for some $n \geq 1$, which does not depend on the descent datum.
\end{proof}

\begin{construction}\label{constr:perf}
Let $\underline{\mathcal F}$ be a coherent $\tau$-sheaf on~$\mathfrak X$. Iterated composition of the adjoint of~$\tau$ yields a direct system
\[
   \sigma_*^\bullet \tau := \left( \mathcal F \longrightarrow \sigma_* \mathcal F \longrightarrow \sigma^2_* \mathcal F \longrightarrow \ldots \ \right)
\]
Write $\widehat{\mathcal F} := \varinjlim \sigma_*^\bullet \tau$ for the direct limit. The adjoint of the canonical isomorphism $\widehat{\mathcal F} \to \sigma_*\widehat{\mathcal F}$ gives~$\widehat{\mathcal F}$ the structure of a $\tau$-sheaf, which we denote by~$\underline{\widehat{\mathcal F}}$. We call~$\underline{\widehat{\mathcal F}}$ the \emph{perfection} of~$\underline{\mathcal F}$. The assignment $\underline{\mathcal F} \mapsto \underline{\widehat{\mathcal F}}$ yields a functor $\text{perf} \: \textsf{QCoh}_\tau(\mathfrak X,B) \to \textsf{QCoh}_\tau(\mathfrak X,B)$.
\end{construction}

One easily sees that $\underline{\widehat{\mathcal F}} = 0$ if $\underline{\mathcal F}$ is nilpotent. More generally, if $f$ is a nil-isomorphism, then $\text{perf}(f)$ is an isomorphism.

\begin{lemma}\label{lem:perf}
    The induced functor $\overline{\text{perf}}\: \textsf{QCrys}(\mathfrak X,B) \to \textsf{QCoh}_\tau(\mathfrak X,B)$ is an exact, fully faithful right adjoint to the localisation functor.
\end{lemma}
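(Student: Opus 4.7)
The plan is to verify the statement in four steps, using the universal property of localisation together with the $\sigma^* \dashv \sigma_*$ adjunction along the Frobenius. Well-definedness of $\overline{\text{perf}}$ is immediate: the observation preceding the lemma says $\text{perf}(f)$ is an isomorphism whenever $f$ is a nil-isomorphism, so the universal property of the localisation $L\: \textsf{QCoh}_\tau(\mathfrak X, B) \to \textsf{QCrys}(\mathfrak X, B)$ produces a unique functor $\overline{\text{perf}}\: \textsf{QCrys}(\mathfrak X, B) \to \textsf{QCoh}_\tau(\mathfrak X, B)$ with $\overline{\text{perf}} \circ L = \text{perf}$.

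The substantive step is the adjunction $L \dashv \overline{\text{perf}}$. Given $\underline{\mathcal F}, \underline{\mathcal G} \in \textsf{QCoh}_\tau(\mathfrak X, B)$, the roof description from the Remark yields
\[
\Hom_{\textsf{QCrys}}(L \underline{\mathcal F}, L \underline{\mathcal G}) \;\cong\; \varinjlim_{n} \Hom_{\textsf{QCoh}_\tau}((\sigma^n)^* \underline{\mathcal F}, \underline{\mathcal G}),
\]
with transition maps given by precomposition with $\tau$. The $\sigma^* \dashv \sigma_*$ adjunction along the affine Frobenius upgrades to an adjunction on $\tau$-sheaves, so the right-hand side identifies with $\varinjlim_n \Hom(\underline{\mathcal F}, \sigma_*^n \underline{\mathcal G})$; commuting $\Hom(\underline{\mathcal F}, -)$ past the filtered colimit defining $\widehat{\underline{\mathcal G}}$ gives $\Hom(\underline{\mathcal F}, \widehat{\underline{\mathcal G}}) = \Hom(\underline{\mathcal F}, \overline{\text{perf}}(L \underline{\mathcal G}))$. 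Naturality in both variables is then a diagram chase through the construction.

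Fully faithfulness of $\overline{\text{perf}}$ is a formal consequence of being a right adjoint to a localisation: it is equivalent to the counit $L \circ \overline{\text{perf}} \Rightarrow \id$ being an isomorphism, which reduces to showing that the canonical map $\underline{\mathcal G} \to \widehat{\underline{\mathcal G}}$ is a nil-isomorphism for every $\underline{\mathcal G}$. Since $\widehat{\underline{\mathcal G}}$ already has invertible $\tau$, applying $\text{perf}$ a second time is the identity and the claim follows. For exactness, left exactness is automatic from being a right adjoint; for right exactness I would work on the level of $\text{perf} = \overline{\text{perf}} \circ L$, observing that the Frobenius $\sigma$ is affine so $\sigma_*$ is exact, and filtered colimits are exact in the quasi-coherent category. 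Combined with exactness and essential surjectivity of $L$, this transfers to exactness of $\overline{\text{perf}}$.

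The main obstacle is the adjunction step: verifying that the displayed bijection is canonical on equivalence classes of roofs rather than on chosen representatives, and that the $\sigma^* \dashv \sigma_*$ adjunction really does preserve $\tau$-compatibility on both sides of the isomorphism. Commuting $\Hom(\underline{\mathcal F}, -)$ past the perfection colimit in the quasi-coherent setting also requires a moment's thought --- indeed, this is precisely what forces us out of the coherent category, since coherence is not stable under the colimit used to define $\widehat{\underline{\mathcal G}}$.
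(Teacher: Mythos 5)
Your overall architecture matches the paper's (which simply cites \cite[Prop.\ 3.4.8 and 3.3.13]{BP} for the adjunction and full faithfulness, and exactness of filtered colimits for exactness), but the way you execute the adjunction step has genuine gaps in the quasi-coherent setting. First, the roof formula $\Hom_{\textsf{QCrys}}(L\underline{\mathcal F},L\underline{\mathcal G})\cong\varinjlim_n\Hom((\sigma^n)^*\underline{\mathcal F},\underline{\mathcal G})$ is only justified by the Remark (and \cite[Prop.\ 3.4.6]{BP}) for \emph{coherent} crystals: for quasi-coherent $\tau$-sheaves the localising class consists of maps with only \emph{locally} nilpotent kernel and cokernel, with no uniform bound on the nilpotence degree, so the roofs $\underline{\mathcal F}\xleftarrow{\tau^n}(\sigma^n)^*\underline{\mathcal F}$ are not cofinal and the colimit formula needs a separate argument. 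Second, commuting $\Hom(\underline{\mathcal F},-)$ past the filtered colimit defining $\widehat{\underline{\mathcal G}}$ requires $\underline{\mathcal F}$ to be finitely presented, which fails for general quasi-coherent $\underline{\mathcal F}$; you flag this but misdiagnose it --- coherence of the \emph{source} is exactly what would rescue this step, and the reason the target must be $\textsf{QCoh}_\tau$ is only that $\widehat{\underline{\mathcal G}}$ is no longer coherent. Third, your justification that $\eta_{\underline{\mathcal G}}\colon\underline{\mathcal G}\to\widehat{\underline{\mathcal G}}$ is a nil-isomorphism (``applying $\text{perf}$ a second time is the identity'') is not an argument: that shows $\text{perf}(\eta)$ is an isomorphism, which does not imply $\eta$ is a nil-isomorphism unless you already know the conclusion of the lemma.

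The robust route, and the one behind the citations in the paper, is to avoid Hom-set computations entirely: take the unit to be $\eta_{\underline{\mathcal F}}\colon\underline{\mathcal F}\to\widehat{\underline{\mathcal F}}$ and prove directly that $\eta_{\underline{\mathcal G}}$ is a nil-isomorphism for every $\underline{\mathcal G}$ (its kernel is the union of the sections killed by some power of $\tau$, and every section of the colimit is carried back into the image of $\underline{\mathcal G}$ by a suitable power of $\tau$, so kernel and cokernel are locally nilpotent --- this is \cite[Prop.\ 3.3.13]{BP}). The counit is then the inverse of $L(\eta_{\underline{\mathcal G}})$ in $\textsf{QCrys}$, the triangle identities are a short check using that $\tau$ is invertible on $\widehat{\underline{\mathcal G}}$, and the counit being an isomorphism gives full faithfulness for free. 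Your treatment of exactness (left exactness from the adjunction, right exactness from exactness of $\sigma_*$ along the affine Frobenius and of filtered colimits in $\textsf{QCoh}_\tau$, transported along the exact and essentially surjective $L$) is correct and agrees with the paper.
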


\begin{proof}
    Exactness follows from the exactness of filtered direct limits in $\textsf{QCoh}_\tau(\mathfrak X,B)$, cf.\ \cite[Tag 0781]{stacks_project}. The rest is formal: the adjunction is proved as in \cite[Prop.\ 3.4.8]{BP}, and the counit is an isomorphism by \cite[Prop.\ 3.3.13]{BP}; the latter is equivalent to the right adjoint being fully faithful.
\end{proof}

\begin{proposition}[Descent for crystals]
    Let $\textsf{C}(-) \in \{\textsf{Crys}(-,B), \textsf{QCrys}(-,B)\}$. Then $\pi^*$ induces an equivalence of categories $\textsf{C}(\mathfrak X) \xrightarrow{\sim} \textsf{C}(X \to \mathfrak X)$.
\end{proposition}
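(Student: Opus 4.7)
The plan is to reduce the statement to descent for $\tau$-sheaves (Proposition~\ref{prop:tau_descent}) by means of the perfection functor of Construction~\ref{constr:perf}, which mediates between the two worlds.

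First I would verify a compatibility: for any \'etale morphism $f\colon Y \to Z$ of Deligne-Mumford stacks and any $\underline{\mathcal F} \in \textsf{QCoh}_\tau(Z,B)$, there is a natural isomorphism $f^*\widehat{\underline{\mathcal F}} \cong \widehat{f^*\underline{\mathcal F}}$. This holds because $f^*$ commutes with the filtered colimit defining perfection and because $\sigma$ commutes with \'etale morphisms. In particular, perfection is compatible with $\pi^*$, with the two projections $\mathrm{pr}_i\colon X\times_{\mathfrak X} X \to X$, and with the three projections from $X\times_{\mathfrak X} X\times_{\mathfrak X} X$.

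Next I would handle the quasi-coherent case. By Lemma~\ref{lem:perf}, the perfection realises $\textsf{QCrys}(\mathfrak X,B)$ as the essential image of a fully faithful right adjoint $\overline{\text{perf}}\colon \textsf{QCrys}(\mathfrak X,B) \to \textsf{QCoh}_\tau(\mathfrak X,B)$. Given a descent datum $(\underline{\mathcal F},\psi) \in \textsf{QCrys}(X \to \mathfrak X)$, applying $\overline{\text{perf}}$ and using the compatibility above produces a descent datum $(\widehat{\underline{\mathcal F}},\overline{\text{perf}}(\psi))$ in $\textsf{QCoh}_\tau(X \to \mathfrak X)$; the cocycle identity transports through because perfection is a functor. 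Proposition~\ref{prop:tau_descent} then provides a $\tau$-sheaf on $\mathfrak X$, and its crystal class is the desired essential preimage. Full faithfulness of $\pi^*$ on $\textsf{QCrys}$ is dual: Lemma~\ref{lem:perf} identifies $\Hom$-groups in $\textsf{QCrys}$ with $\Hom$-groups of perfections in $\textsf{QCoh}_\tau$, and the latter form a sheaf of descent data by Proposition~\ref{prop:tau_descent}.

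Finally, for the coherent case, full faithfulness is automatic since $\textsf{Crys}(-,B) \hookrightarrow \textsf{QCrys}(-,B)$ is fully faithful. For essential surjectivity, given a coherent descent datum $(\underline{\mathcal F},\psi)$, the quasi-coherent case yields a quasi-coherent crystal on $\mathfrak X$ restricting to $[\underline{\mathcal F}]$; the task is to exhibit a coherent $\tau$-sheaf representative. Here I would invoke the Remark following the definition of $\textsf{Crys}$ to write $\psi$ as a roof $\mathrm{pr}_1^*\underline{\mathcal F} \xleftarrow{\tau^n} (\sigma^n)^*\mathrm{pr}_1^*\underline{\mathcal F} \to \mathrm{pr}_2^*\underline{\mathcal F}$, and do the same for $\psi^{-1}$ and for the two ways of composing $\psi$ around the cocycle triangle. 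Replacing $\underline{\mathcal F}$ by $(\sigma^N)^*\underline{\mathcal F}$ for $N$ sufficiently large then converts all the roofs into honest morphisms of coherent $\tau$-sheaves and absorbs the nilpotent discrepancies into strict equalities. This produces a descent datum in $\textsf{Coh}_\tau(X \to \mathfrak X)$ with the same crystal class as the original; Proposition~\ref{prop:tau_descent} now yields the required coherent $\tau$-sheaf on~$\mathfrak X$.

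The main obstacle is precisely this last passage: the cocycle identity in $\textsf{Crys}$ is only a commutativity-up-to-nil-iso in $\textsf{Coh}_\tau$, and one must check that a single Frobenius twist can be chosen to rectify it simultaneously on all three iterated fibre products. One useful input here is that the nilpotent kernels and cokernels involved are coherent on a noetherian stack and are therefore annihilated by some finite power of $\tau$, which can be absorbed uniformly by choosing $N$ large enough.
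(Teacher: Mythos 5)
Your treatment of the quasi-coherent case is correct and is essentially the paper's argument: both embed $\textsf{QCrys}$ fully faithfully into $\textsf{QCoh}_\tau$ via perfection (Lemma~\ref{lem:perf}), use that perfection commutes with \'etale pullback, and then reduce to Proposition~\ref{prop:tau_descent}.

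The coherent case is where you diverge, and your rectification step fails as written. A morphism of crystals $\psi\colon \mathrm{pr}_1^*\underline{\mathcal F}\to\mathrm{pr}_2^*\underline{\mathcal F}$ is a roof whose left leg is $\tau^n\colon(\sigma^n)^*\mathrm{pr}_1^*\underline{\mathcal F}\to\mathrm{pr}_1^*\underline{\mathcal F}$ and whose right leg is an honest map $g\colon(\sigma^n)^*\mathrm{pr}_1^*\underline{\mathcal F}\to\mathrm{pr}_2^*\underline{\mathcal F}$. Replacing $\underline{\mathcal F}$ by $(\sigma^N)^*\underline{\mathcal F}$ produces an honest morphism $\mathrm{pr}_1^*(\sigma^N)^*\underline{\mathcal F}\to\mathrm{pr}_2^*\underline{\mathcal F}$, but a descent datum for $(\sigma^N)^*\underline{\mathcal F}$ must land in $\mathrm{pr}_2^*(\sigma^N)^*\underline{\mathcal F}$, and the only natural comparison $\tau^N\colon \mathrm{pr}_2^*(\sigma^N)^*\underline{\mathcal F}\to\mathrm{pr}_2^*\underline{\mathcal F}$ points the wrong way. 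The Frobenius twist acts symmetrically on source and target while the roof is asymmetric, so no choice of $N$ strictifies $\psi$, let alone $\psi^{-1}$ and the cocycle identity simultaneously. A workable strictification instead passes to the image $\underline{\mathcal F}_0$ of $\mathcal F$ in the perfection $\widehat{\mathcal F}$, where $\psi$ becomes an honest isomorphism of quasi-coherent $\tau$-sheaves satisfying the strict cocycle condition, and then enlarges $\underline{\mathcal F}_0$ inside $\widehat{\underline{\mathcal F}}$ (using coherence and noetherianness) until the descent datum preserves its pullbacks; this is more delicate than a single twist. The paper avoids strictification altogether: it runs both cases through the commutative square formed by $\overline{\mathrm{perf}}$ and $\pi^*$, gets full faithfulness from Lemma~\ref{lem:perf} and Proposition~\ref{prop:tau_descent}, and for essential surjectivity asserts that the essential images of the two perfection functors agree. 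That assertion is precisely the statement your construction is trying to establish by hand --- a quasi-coherent $\tau$-sheaf on $\mathfrak X$ whose pullback to $X$ is the perfection of a coherent $\tau$-sheaf is itself nil-isomorphic to a coherent one --- so if you keep your route, prove it in that form rather than via the $\sigma^N$-twist.
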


\begin{proof}

We first extend the functor $\overline{\text{perf}}$ to the categories of descent data. Note that for $i \in \{1,2\}$, we have a natural isomorphism $\widehat{\text{pr}^*_i \underline{\cF}} \xrightarrow{\sim} \text{pr}^*_i \widehat{\underline{\cF}}$, since $\text{pr}_i \: X \times_{\mathfrak X} X \to X$ is flat and hence $\text{pr}_i^*\sigma_* \cong \sigma_*\text{pr}_i^*$. It is then clear that a descent datum $\psi$ of objects in $\textsf{C}$ induces a descent datum $\hat{\psi}$ of their perfections, and that a morphism $f$ in $\textsf{C}(X \to \mathfrak X)$ induces a morphism $\hat{f}$ in $\textsf{QCoh}_\tau(X \to \mathfrak X,B)$. 
Since the perfection functor is fully faithful on $\textsf{C}(X)$ and $\textsf{C}(X \times_{\mathfrak X} X)$ by Lemma~\ref{lem:perf},
so is the perfection functor on~$\textsf{C}(X \to \mathfrak{X})$.

Now consider the diagram
   \[
   \begin{tikzpicture}
       \node(A){$\textsf{C}(\mathfrak X)$};
       \node(B) at (4,0) {$\textsf{QCoh}_\tau(\mathfrak X,B)$};
       \node(C)[below of =A]{$\textsf{C}(X \to \mathfrak X)$};
       \node(D)[below of =B]{$\textsf{QCoh}_\tau(X \to \mathfrak X,B)$};
       \node(E) at (8.5,0) {$\textsf{QCrys}(\mathfrak X,B)$};
       \node(F)[below of =E] {$\textsf{QCrys}(X \to \mathfrak X,B)$};
       \draw[->](B) to node{$q$}(E);
       \draw[->](D) to node{$q$}(F);
       \draw[->](E) to (F);
       \draw[->](A) to node{$\overline{\text{perf}}$}(B);
       \draw[->](C) to node{$\overline{\text{perf}}$}(D);
       \draw[->](A) to node{$\pi^*$}(C);
       \draw[->](B) to node{$\pi^*_{\textsf{Q}}$}(D);
   \end{tikzpicture}
   \]
where $q$ denotes localisation. By the above, the perfection functors are fully faithful. Since $\pi_\textsf{Q}^*$ is an equivalence by Proposition~\ref{prop:tau_descent}, $\pi^*$ is also fully faithful. Moreover, we have a natural isomorphism $q \circ \overline{\text{perf}} \cong \text{id}$ by Lemma~\ref{lem:perf}. Hence a simple diagram chase, along with the fact that $\textsf{C}(X \to \mathfrak X)$ is a full subcategory of $\textsf{QCrys}(X \to \mathfrak X,B)$, shows that $\pi^*$ is essentially surjective and hence an equivalence.
\end{proof}

We record the following consequence, which is proved in the same way as Corollary~\ref{cor:tau_pullback}.

\begin{corollary}\label{cor:crys_pullback}
    The functor $\pi^* \colon \textsf{Crys}(\mathfrak X,B) \to \textsf{Crys}(X,B)$ is exact and conservative. \hfill \qedsymbol{}
\end{corollary}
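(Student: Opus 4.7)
The plan is to reduce both claims to the preceding descent proposition. That proposition realises $\pi^*$ as the composition
\[
\pi^* \colon \textsf{Crys}(\mathfrak X, B) \xrightarrow{\ \Phi\ } \textsf{Crys}(X \to \mathfrak X, B) \xrightarrow{\ U\ } \textsf{Crys}(X, B),
\]
where $\Phi$ is an equivalence of categories and $U$ is the functor that forgets the descent datum. Since $\Phi$ is an equivalence, the conclusions for $\pi^*$ follow from the analogous statements for $U$, which I plan to verify by formal manipulations with descent data.

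For exactness, the key observation is that the abelian structure on $\textsf{Crys}(X \to \mathfrak X, B)$ is induced from $\textsf{Crys}(X,B)$: given a morphism $f\colon (\underline{\cF},\psi) \to (\underline{\cG},\phi)$ in the descent category, its kernel and cokernel in $\textsf{Crys}(X,B)$ inherit canonical descent data by the naturality of $\text{pr}_i^*$ and the fact that $\text{pr}_i^*$ is exact on crystals (which itself follows from the Serre-localisation construction of $\textsf{Crys}$ together with $\text{pr}_i^*$ preserving nilpotence of $\tau$-sheaves, as in Corollary~\ref{cor:tau_pullback}). Consequently $U$ preserves and reflects exact sequences, so $\pi^*$ is exact.

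For conservativity, suppose $f\colon (\underline{\cF},\psi) \to (\underline{\cG},\phi)$ has $U(f)$ an isomorphism, with inverse $g$ in $\textsf{Crys}(X,B)$. The compatibility $\text{pr}_2^* f \circ \psi = \phi \circ \text{pr}_1^* f$, together with invertibility of $\text{pr}_i^* f$, yields $\text{pr}_2^* g \circ \phi = \psi \circ \text{pr}_1^* g$; thus $g$ is itself a morphism of descent data, and is a two-sided inverse of $f$ in $\textsf{Crys}(X \to \mathfrak X, B)$. The only delicate point is the bookkeeping around kernels and cokernels inside a localised category; should any doubt arise one can simply transport the argument along the fully faithful exact embedding $\overline{\text{perf}}$ of Lemma~\ref{lem:perf} into $\textsf{QCoh}_\tau$, where exactness and conservativity of pullback are immediate from the quasi-coherent version of Corollary~\ref{cor:tau_pullback}.
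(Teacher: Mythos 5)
Your proposal is correct and follows essentially the same route as the paper: the paper's proof simply refers back to the argument for Corollary~\ref{cor:tau_pullback}, i.e.\ it uses the descent equivalence to identify $\pi^*$ with the functor forgetting the descent datum, and then observes that exactness and isomorphy do not depend on the descent datum. Your extra remarks (the inherited descent data on kernels/cokernels via exactness of the \'etale pullbacks $\text{pr}_i^*$, and the fallback via $\overline{\text{perf}}$) only flesh out details the paper leaves implicit.
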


\subsection{Derived categories of crystals}
An important aspect of the theory of crystals are the functors $f^*, f_!$ and $\otimes$. Together, these establish a pre-6-functor formalism in the sense of \cite[Appendix A.5]{mann_thesis}.
Our primary goal is to apply the theory of crystals to the moduli stack~$\mathfrak M_r$. For this reason, we study the bounded derived category $D^b(\textsf{Crys}(\mathfrak X,B))$
of crystals on a tame Deligne--Mumford stack $\mathfrak X$. One could mimic the constructions from~\cite{BP} to construct the derived functors on $D^b(\textsf{Crys}(\mathfrak X,B))$ directly. Instead we will, where possible, apply the results of \cite{mann_thesis} to construct the functors abstractly.

Recall that a morphism $f \: \mathfrak Y \to \mathfrak X$ is called \emph{compactifiable} if it can be factored as $f = \overline{f} \circ j$, where $j$ is an open immersion and $\overline{f}$ is proper.

\begin{theorem}\label{thm:functors}
    Let $f\: \mathfrak Y \to \mathfrak X$ be a morphism between Deligne--Mumford stacks of finite type over~$\bF_q$, and let $B \to B'$ be a ring map. Then we have the following functors:
    \begin{align*}
        f^*&\: D^b(\textsf{Crys}(\mathfrak X,B)) \longrightarrow D^b(\textsf{Crys}(\mathfrak Y,B)); \\
        - \otimes^L - &\: D^-(\textsf{Crys}(\mathfrak X,B)) \times D^-(\textsf{Crys}(\mathfrak X,B)) \longrightarrow D^-(\textsf{Crys}(\mathfrak X,B)); \\
        - \otimes^L_B B' &\: D^b(\textsf{Crys}(\mathfrak X,B)) \longrightarrow D^b(\textsf{Crys}(\mathfrak X,B')).
    \end{align*}
    If $f$ is proper, we also have a derived pushforward
    \[
    Rf_*\: D^b(\textsf{Crys}(\mathfrak Y,B)) \longrightarrow D^b(\textsf{Crys}(\mathfrak X,B)),
    \]
    and if $f$ is compactifiable, we have a proper pushforward
    \[
    Rf_! \: D^b(\textsf{Crys}(\mathfrak Y,B)) \longrightarrow D^b(\textsf{Crys}(\mathfrak X,B)).
    \]
    These functors satisfy all usual compatibilities, including the base change isomorphism and projection formula.
\end{theorem}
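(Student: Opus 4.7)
The plan is to bootstrap from the scheme-level theory of \cite{BP} using the étale descent results of \textsection\ref{sec:descent}, and then invoke the abstract machinery of \cite[Appendix~A.5]{mann_thesis} to package everything as a pre-6-functor formalism. The tameness hypothesis will enter exactly when we need pushforward along the residual gerbes to preserve coherence.

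\textbf{Step 1: The scheme case.} For schemes of finite type over~$\bF_q$, all six operations on $D^b(\textsf{Crys}(-,B))$ together with the ring-change functor $-\otimes^L_B B'$ are constructed in \cite{BP}, and the base change and projection formulas are verified there. I would recall these as a black box.

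\textbf{Step 2: The $*$-functors and tensor products on stacks.} For $f^*$, $-\otimes^L-$ and $-\otimes^L_B B'$, I would proceed by étale descent. Choose an étale atlas $X \to \mathfrak X$ by a scheme. By Proposition~\ref{prop:tau_descent} and its corollary for crystals, both $\textsf{Coh}_\tau(\mathfrak X,B)$ and $\textsf{Crys}(\mathfrak X,B)$ are equivalent to the corresponding categories of descent data, and pullback along any étale map is exact. A bounded complex of crystals on~$\mathfrak X$ thus corresponds to a bounded complex on~$X$ with a descent datum; applying $f^*$, $\otimes^L$, or $\otimes^L_B B'$ on the simplicial resolution $X^{\times_{\mathfrak X} \bullet}$ and using the scheme-level base-change isomorphisms yields a new bounded complex with compatible descent datum, hence a crystal on~$\mathfrak X$ (respectively on $\mathfrak Y$ after pulling back the atlas).

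\textbf{Step 3: The pushforward functors.} For $Rf_*$ with $f$ proper and $Rf_!$ for $f$ compactifiable, the descent approach is more delicate: one cannot simply apply the scheme-level functor on an atlas. Here I would invoke \cite[Appendix~A.5]{mann_thesis}, which gives a criterion for extending a pre-6-functor formalism from schemes to Deligne-Mumford stacks in terms of étale (cohomological) descent plus a bound on the cohomological dimension of the residual gerbes. Since $\mathfrak X$ is tame, the automorphism group at every geometric point has order prime to $p = \char \bF_q$; together with the fact that crystals have well-behaved pushforward along finite étale maps of degree prime to~$p$ (via the averaging idempotent on the associated $\tau$-sheaf, which produces a direct summand in $\textsf{Coh}_\tau$ and hence in the Verdier quotient $\textsf{Crys}$), this ensures that the Leray spectral sequence for the atlas degenerates in a bounded range. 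Then Mann's machinery produces $Rf_*$ on compactifiable morphisms, and $Rf_!$ is obtained from the factorisation $f = \overline f \circ j$ by $Rf_! := R\overline f_* \circ j_!$, using $j_! = $ extension by zero (a left adjoint to $j^*$, available on the abelian level since crystals are a Serre quotient of $\tau$-sheaves, where $j_!$ exists).

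\textbf{Step 4: Compatibilities.} Base change and the projection formula are properties of a pre-6-functor formalism; they descend automatically once one has checked them on the scheme cover and the construction of Step~3 is compatible with arbitrary pullback. I would check this by a straightforward but lengthy comparison of spectral sequences on atlases.

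\textbf{Main obstacle.} The hard step is Step~3: constructing $Rf_*$ and $Rf_!$ on stacks while preserving the bounded and coherent conditions. The tame hypothesis is what makes this work, because in positive characteristic the failure of tameness would destroy both the boundedness of cohomological descent and the exactness of averaging over stabilisers. Getting the finiteness and boundedness estimates right — and translating Mann's framework, which is developed in a different sheaf-theoretic setting, to the crystal setting of \cite{BP} — is where most of the real work lies.
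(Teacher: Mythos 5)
Your overall strategy --- black-boxing the scheme case from \cite{BP}, using the descent results of \textsection\ref{sec:descent} for the $*$-type functors, and invoking \cite[Appendix~A.5]{mann_thesis} to package the pushforwards --- is the same as the paper's, and Steps 1, 2 and 4 are essentially what the paper does (the paper obtains $f^*$ and $\otimes^L$ from Mann's Prop.~A.5.10/A.5.16 rather than by an explicit simplicial descent argument, and it constructs $-\otimes^L_B B'$ by hand since it falls outside Mann's framework, but these are cosmetic differences).

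The real divergence, and the place where your write-up has a gap, is Step~3. Mann's extension result (Prop.~A.5.16 as used in the paper) only produces the formalism for morphisms of stacks that are \emph{representable by schemes}; it is not a cohomological-descent criterion with a bound on the cohomological dimension of residual gerbes, and it does not by itself give $Rf_*$ for a proper non-representable morphism such as $B G \to \Spec(\bF_q)$ --- which is exactly the case needed later for the trace formula. The paper fills this in concretely: for proper $f$ it takes the pushforward that already exists on coherent sheaves on stacks \cite[Thm~11.6.1]{olsson}, extends it to $\tau$-sheaves via the base change morphism $\sigma^* f_* \to f_* \sigma^*$, checks it preserves nilpotence so that it descends to crystals, and then sets $Rf_! := R\bar f_* \circ j_!$ for a compactification $f = \bar f \circ j$, with independence of the compactification argued as in \cite[\textsection 6.7]{BP}. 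Your appeal to an ``averaging idempotent'' and Leray degeneration gestures at the right phenomenon (tameness is indeed what keeps $Rf_*$ bounded along the coarse space map, a point the paper is itself terse about), but as written it does not actually construct the functor on crystals, nor does it address why the construction lands in coherent $\tau$-sheaves rather than merely in quasi-coherent ones. You should replace the vague invocation of Mann's machinery in Step~3 with an explicit construction of $f_*$ on $\tau$-sheaves for proper $f$ along the lines above.
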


\begin{proof}
    By \cite[Chapter 6]{BP}, the theorem is true when $f$ is a morphism of schemes. We use \cite[A.5]{mann_thesis} to extend it to representable morphisms of stacks, as follows. With notation as in loc.\ cit., we let $(\mathcal C,E)$ be the geometric setup given by $\mathcal C = \textsf{Sch}_{\bF_q}$ and $E$ is the class of compactifiable morphisms. One easily checks that the pair $(I,P)$, where~$I$ is the class of open immersions and~$P$ is the class of proper morphisms, forms a suitable decomposition of~$E$ as in Def.\ A.5.9 of loc.\ cit. Let $\cD\: \cC^{\text{op}} \to \textsf{Cat}_\infty^\otimes$ be the functor sending $X \mapsto D^-(\textsf{Crys}(X,B))$ and $f \mapsto f^*$. Then Prop.\ A.5.10 in loc.\ cit.\ yields a pre-6-functor formalism $\mathcal D \colon \text{Corr}(\mathcal C)_{E,all} \to \textsf{Cat}_\infty$. One can do the same with $\mathcal D(X)$ replaced with $D^b(\textsf{Crys}^{\text{flat}}(X,B))$ (cf.\ Definition ~\ref{def:flat}), by \cite[Chapter 7]{BP}, or with $D^-(\textsf{QCrys}(X,B))$.
    
    Let $\mathcal C'$ be the category of Deligne--Mumford stacks over~$\bF_q$, and let $E'$ denote the class of compactifiable morphisms which are representable by schemes. Then \cite[Prop.\ A.5.16]{mann_thesis} yields the desired pre-6-functor formalism for crystals on stacks, which automatically satisfies the claimed compatibilities. The only difference is that the functors thus obtained are defined on the \emph{bounded above} derived category. To see that boundedness is preserved by $f^*$ and $Rf_!$, we apply Corollary~\ref{cor:crys_pullback}. It implies that if $\pi \colon X \to \mathfrak X$ is an \'etale cover and $\underline{\cF}^\bullet \in D^-(\textsf{Crys}(\mathfrak X,B))$ satisfies $\pi^*\underline{\cF}^\bullet \in D^b(\textsf{Crys}(X,B))$, then $\underline{\cF}^\bullet \in D^b(\textsf{Crys}(\mathfrak X,B))$; combining this with proper base change yields the claim. Note that so far, $Rf_!$ is only defined for representable $f$.
    
    To define $Rf_!$ for compactifiable morphisms, it suffices to define $Rf_*$ for proper $f$. To see this, consider a decomposition $f = \bar{f} \circ j$ with $\bar{f}$ proper and $j$ an open immersion. Then we may define $Rf_! := R\bar{f}_* \circ j_!$, noting that $Rj_! = j_!$ is exact. By the same arguments as in \cite[\textsection 6.7]{BP}, the assignment $f \mapsto Rf_!$ is then natural in $f$ and does not depend on the chosen compactification; moreover, if $f$ is representable by schemes, this definition of $Rf_!$ coincides with the previous one.
    
    To define $Rf_*$ for proper $f$, note that this functor already exists on the level of coherent sheaves \cite[Thm 11.6.1]{olsson}.
    It can be extended to $\tau$-sheaves by virtue of the base change morphism $\sigma^*f^* \to f^*\sigma^*$. This functor $Rf_*$ on $\tau$-sheaves preserves nilpotence, so it induces a functor $Rf_*$ on crystals which behaves as one expects.
    
    The last functor $- \otimes^L_B B'$ falls outside of the scope of \cite{mann_thesis}, but is simple to describe: it is induced by the extension of scalars functor on the underlying sheaves, and is straightforward to construct for crystals on stacks. This completes the proof.
\end{proof}
\section{Lefschetz trace formula}\label{sec:trace_formula}
In this section, we define $l$-series of flat crystals on tame Deligne--Mumford stacks. In the case of schemes, the $l$-series is the logarithmic derivative of the crystalline $L$-function defined in \cite{BP}. Our main result is a Lefschetz trace formula which relates the $l$-series of the fibers of a crystal to the $l$-series of its compactly supported cohomology, cf.\ Theorem~\ref{thm:trace_formula}.

\subsection{Flatness}
Let $\mathfrak X$ be a Deligne--Mumford stack of finite type over~$\bF_q$.
In order to begin discussing $l$-series of $B$-crystals, one needs to isolate a class of crystals for which it makes sense to talk about the trace of~$\tau$ when considered as a $B$-linear map. This leads to the notion of flat crystals. 

\begin{definition}\label{def:flat}
    A crystal $\underline{\cF} \in \textsf{Crys}(\mathfrak X,B)$ is \emph{flat} if the functor $\underline{\mathcal F} \otimes - \colon \textsf{Crys}(\mathfrak X,B) \to \textsf{Crys}(\mathfrak X,B)$ is exact. The full subcategory of flat crystals is denoted by $\textsf{Crys}^\text{flat}(\mathfrak X,B)$.
\end{definition}

Flatness is preserved by all the derived functors from Theorem~\ref{thm:functors},
and the derived tensor product $\underline{\cF}^\bullet \otimes^L \underline{\mathcal G}^\bullet$ of two bounded complexes $\underline{\cF}^\bullet, \underline{\mathcal G}^\bullet \in D^b(\textsf{Crys}^\text{flat}(\mathfrak X,B))$ lies again in $D^b(\textsf{Crys}^\text{flat}(\mathfrak X,B))$.

The following result shows that flatness is a pointwise property, and hence the theory of flat crystals over stacks is analogous to the theory for schemes.

\begin{lemma}\label{lem:flat}
    A crystal $\underline{\cF} \in \textsf{Crys}(\mathfrak X,B)$ is flat if and only if $x^*\underline{\cF}$ is flat for every $x \in \mathfrak X(\bF_{q^n})$.
\end{lemma}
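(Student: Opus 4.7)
The plan is to reduce to the scheme case, which is presumably the analogous pointwise criterion proved in \cite[Chapter 7]{BP} for flat crystals over schemes of finite type over $\bF_q$.

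The forward direction is immediate from the fact that flatness is preserved under pullback: for any $x \in \mathfrak X(\bF_{q^n})$, the functor $x^*$ is exact and symmetric monoidal, so if $\underline{\cF} \otimes -$ is exact on $\textsf{Crys}(\mathfrak X, B)$, then so is $x^*\underline{\cF} \otimes -$ on $\textsf{Crys}(\Spec \bF_{q^n}, B)$.

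For the converse, I would fix an \'etale cover $\pi\: X \to \mathfrak X$ by a scheme of finite type over $\bF_q$ and work through two reductions. First, I claim that $\underline{\cF}$ is flat on $\mathfrak X$ if and only if $\pi^*\underline{\cF}$ is flat on $X$. Indeed, given a short exact sequence $0 \to \underline{\mathcal G}' \to \underline{\mathcal G} \to \underline{\mathcal G}'' \to 0$ in $\textsf{Crys}(\mathfrak X, B)$, tensoring with $\underline{\cF}$ yields a complex whose exactness can be tested after applying $\pi^*$, by the exactness and conservativity of $\pi^*$ from Corollary \ref{cor:crys_pullback}. Since $\pi^*$ commutes with tensor products (a routine consequence of the descent framework of \textsection \ref{sec:descent} combined with the monoidality of pullback on coherent sheaves), the resulting sequence on $X$ is $\pi^*\underline{\cF} \otimes \pi^*\underline{\mathcal G}^\bullet$, which is exact by flatness of $\pi^*\underline{\cF}$.

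Second, I would invoke the scheme case to conclude that $\pi^*\underline{\cF}$ is flat if and only if $y^*\pi^*\underline{\cF}$ is flat for every $y \in X(\bF_{q^n})$ and every $n \geq 1$. But $y^*\pi^*\underline{\cF} = (\pi \circ y)^*\underline{\cF}$, and $\pi \circ y$ is a point of $\mathfrak X(\bF_{q^n})$, so the hypothesis guarantees this flatness. The main potential obstacle is the careful check that $\pi^*$ is symmetric monoidal at the level of crystals (not just coherent sheaves), which requires tracking compatibility of the $\tau$-structure with tensor products under descent; this is however a formal verification rather than a substantive difficulty.
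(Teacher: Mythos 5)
Your proposal is correct and follows essentially the same route as the paper: reduce to an étale scheme cover $\pi\: X \to \mathfrak X$ using the exactness and conservativity of $\pi^*$ (Corollary~\ref{cor:crys_pullback}) together with monoidality of pullback, then invoke the pointwise flatness criterion for schemes from \cite{BP}. The only cosmetic difference is that the paper phrases the last step via closed points of $X$ and their behaviour under finite residue field extensions, whereas you phrase it directly in terms of $\bF_{q^n}$-points of $X$ composed with $\pi$; this is the same content.
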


\begin{proof}
    Let $\pi \: X \to \mathfrak X$ be an \'etale cover by a scheme. By Corollary \ref{cor:crys_pullback} and the fact that pullback commutes with tensor product, $\underline{\cF}$ is flat if and only if $\pi^*\underline{\cF}$ is flat.
    Since moreover every $x \: \Spec(\bF_{q^n}) \to \mathfrak X$ lifts to a closed point of~$X$ after a finite field extension, the result follows from \cite[Prop.\ 7.2.6 and Cor.\ 4.6.3]{BP}.
\end{proof}

The following result is \cite[Prop.\ 9.3.4]{BP}. It implies that flatness is related to being pointwise locally free, and will allow us to define crystalline $L$-functions.

\begin{proposition}\label{prop:art-perf}
    Suppose $B$ is artinian, and let $k/\bF_q$ be finite. Let $\underline{\cF} \in \textsf{Crys}(\Spec(k),B)$.
    \begin{enumerate}
        \item The perfection $\underline{\widehat{\cF}}$ from Construction~\ref{constr:perf} is naturally isomorphic to the direct summand $\underline{\cF}_{ss} \subset \underline{\cF}$ on which~$\tau$ acts as an isomorphism. In particular, $\underline{\widehat{\cF}} \in \textsf{Coh}_\tau(\Spec(k),B)$. 
        \item $\underline{\cF}$ is flat if and only if the underlying sheaf~$\widehat{\cF}$ of~$\underline{\widehat{\cF}}$ is locally free. \hfill $\Box$
    \end{enumerate}
\end{proposition}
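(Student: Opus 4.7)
The plan is to follow the strategy of~\cite[Prop.\ 9.3.4]{BP}, using the artinian hypothesis on~$B$ to obtain a Fitting-type decomposition of the underlying module. Since $k = \mathbb F_{q^n}$ for some~$n$, the $n$-th Frobenius $\sigma^n$ is the identity on $\Spec(k)$, so $(\sigma^n)^*\mathcal F = \mathcal F$ and $\tau^n$ becomes a $(k \otimes B)$-linear endomorphism of the finitely generated $(k \otimes B)$-module underlying~$\mathcal F$. Because $B$ is artinian and $k$ is a finite product of field extensions of~$\bF_q$, the ring $k \otimes B$ is itself artinian. Fitting's lemma then yields a canonical $\tau^n$-stable decomposition $\mathcal F = \mathcal F_{ss} \oplus \mathcal F_{nil}$ on which $\tau^n$ acts invertibly and nilpotently, respectively. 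A short check shows this refines to a decomposition of $\tau$-sheaves and that $\tau$ itself is already an isomorphism on~$\mathcal F_{ss}$.

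Next I would compute the perfection. On the nilpotent summand~$\mathcal F_{nil}$, some iterate of the adjoint of~$\tau$ is zero, so this piece is killed in the colimit defining~$\widehat{\mathcal F}$. On~$\mathcal F_{ss}$, the transition maps in the system $\sigma_*^\bullet \tau$ are isomorphisms, so the colimit stabilises at~$\mathcal F_{ss}$. This yields the natural identification $\widehat{\mathcal F} \cong \mathcal F_{ss}$ asserted in~(1), and in particular shows $\underline{\widehat{\mathcal F}}$ is coherent and lies in the classical category $\textsf{Coh}_\tau(\Spec(k),B)$.

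For~(2), I would exploit that the tensor product of crystals is computed at the level of underlying $(k \otimes B)$-modules with diagonal $\tau$-action, that nil-isomorphic $\tau$-sheaves induce the same functor on crystals, and that $\underline{\mathcal F} \cong \underline{\mathcal F}_{ss}$ in $\textsf{Crys}(\Spec(k),B)$ by part~(1). Over the artinian ring $k \otimes B$, flatness, projectivity, and local freeness coincide for finitely generated modules, so it is enough to match crystal-flatness of $\underline{\mathcal F}$ with module-flatness of $\mathcal F_{ss}$. The \emph{if} direction is immediate: tensoring with a flat $(k \otimes B)$-module is exact at the module level, hence on $\tau$-sheaves, and exactness descends to the localisation. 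The \emph{only if} direction uses test short exact sequences of $\tau$-sheaves equipped with the zero $\tau$-structure to extract module-level exactness from crystal-level exactness.

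The main obstacle I anticipate is precisely this reverse implication in~(2): one must verify that flatness in the \emph{localised} category of crystals really detects module-theoretic flatness of~$\mathcal F_{ss}$. Concretely, one must produce test $\tau$-sheaves whose tensor products with $\underline{\mathcal F}$ do not accidentally become nilpotent and thus collapse in $\textsf{Crys}$; sheaves with trivial $\tau$ are well suited to this, because tensoring preserves the $\tau$-isomorphism on the $\mathcal F_{ss}$ factor via the diagonal action, so any module-theoretic failure of exactness survives to the crystal level.
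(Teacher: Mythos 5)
The paper does not actually prove this proposition: it is quoted verbatim from \cite[Prop.\ 9.3.4]{BP} and closed with a $\Box$, so your argument has to stand on its own. Your part~(1) does: since $\sigma^{[k:\bF_q]}$ is the identity on $\Spec(k)$, Fitting's lemma applied to the $(k\otimes B)$-linear endomorphism $\tau^{[k:\bF_q]}$ of the finite-length module underlying $\cF$ gives the canonical $\tau$-stable splitting $\cF=\cF_{ss}\oplus\cF_{nil}$, and because $\sigma$ is an automorphism of $\Spec(k)$ the adjoint of $\tau$ is an isomorphism exactly where $\tau$ is, so the colimit defining the perfection kills $\cF_{nil}$ and stabilises on $\cF_{ss}$. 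The \emph{if} direction of (2) is also fine.

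The gap is in the \emph{only if} direction of (2). Test sequences ``equipped with the zero $\tau$-structure'' detect nothing: a $\tau$-sheaf with $\tau=0$ is nilpotent, hence zero in $\textsf{Crys}(\Spec(k),B)$, and since the tensor product carries the diagonal action $\tau_{\cF}\otimes 0=0$, every such test sequence becomes a sequence of nilpotent $\tau$-sheaves after tensoring with $\underline{\cF}$ and is therefore exact in the localised category regardless of whether $\cF_{ss}$ is flat. Your final paragraph tacitly switches to test objects on which $\tau$ is an \emph{isomorphism} (only then does the diagonal action ``preserve the $\tau$-isomorphism''), which is the right idea, but it leaves two steps unaddressed. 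First, not every short exact sequence of $k\otimes B$-modules admits compatible invertible $\tau$-structures, since $\sigma\otimes\id$ permutes the maximal ideals of $k\otimes B$ rather than fixing them; the standard choice is to test against $\underline{\mathds{1}}_{\Spec(k)}\otimes_B N$ for short exact sequences of $B$-modules $N$. For these, all terms of $\underline{\cF}_{ss}\otimes(\underline{\mathds{1}}\otimes_B N)\cong\underline{\cF}_{ss}\otimes_B N$ have bijective $\tau$, and an injective (resp.\ surjective) $\tau$ on a finite-length module is bijective, so exactness in $\textsf{Crys}$ forces exactness of the underlying modules; this yields flatness of $\cF_{ss}$ over $B$. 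Second, you must then upgrade $B$-flatness to $(k\otimes B)$-flatness, e.g.\ by using that $B\to k\otimes B$ is finite \'etale, so that $\cF_{ss}\otimes_{k\otimes B}(-)$ is a natural direct summand of $\cF_{ss}\otimes_B(-)$. Neither step appears in your sketch, and as literally written the test you propose fails.
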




\subsection{\texorpdfstring{$L$}{L}-functions and \texorpdfstring{$l$}{l}-series of crystals}
From now on, we assume that $B$ is a \emph{good coefficient ring} as defined in \cite[Def. 9.7.4]{BP}. For example, $B$ can be artinian or a normal integral domain; in particular, $A = \mathcal O_C(C \setminus \{\infty\})$ is a good coefficient ring. If $\mathfrak p_1,\ldots,\mathfrak p_n$ are the minimal primes of $B$, we denote by $Q_B := B_{\mathfrak p_1} \oplus \cdots \oplus B_{\mathfrak p_n}$ the quotient ring of $B$. Note that the inclusion $B \hookrightarrow Q_B$ is flat, so the functor $- \otimes_B Q_B$ on crystals is exact.

Let $\underline{\mathcal F}$ be a flat $B$-crystal on a scheme $X$. For a closed point $x \in |X|$ of degree $d$, 
denote by $i_x\colon \Spec(\bF_{q^d}) \to X$ the corresponding inclusion. We now recall the $L$-function of a crystal, defined in \cite[Chapter 9]{BP}.

\begin{definition}
Suppose $B$ is an artinian $\bF_q$-algebra, and let $\underline{\mathcal F} \in \textsf{Crys}^\text{flat}(X,B)$. The \emph{crystalline $L$-function} associated to $\underline{\mathcal F}$ is the power series
\[
L( X,\underline{\mathcal F},t) = \prod_{x \in |X|} \det_{B} \left(1 - t\tau \ \bigg{|} \ \widehat{i_x^*\mathcal F}\right)^{-1} \in 1 + tB[\![t]\!].
\]
For general good coefficient rings $B$ and $\underline{\mathcal F} \in \textsf{Crys}^\text{flat}(X,B)$, define
\[
L( X,\underline{\mathcal F},t) = L(X, \underline{\mathcal F} \otimes_B Q_B,t) \in 1 + tB[\![t]\!].
\]
For a derived crystal $\underline{\mathcal F}^\bullet \in D^b(\textsf{Crys}^\text{flat}(X,B))$, define
\[
L(X,\underline{\mathcal F}^\bullet,t) = \prod_{i \in \mathbb Z} L( X,\underline{\mathcal F}^i,t)^{(-1)^i}.
\]
\end{definition}



Our aim is to extend the notion of $L$-functions to crystals on tame Deligne--Mumford stacks. 
To do this without losing the good functoriality properties $L$-functions satisfy, one must take into account the fact that points can have automorphisms.

More precisely, let $\mathfrak X$ be a tame Deligne--Mumford stack of finite type over~$\bF_q$. Then for every $x \in \mathfrak X(\bF_{q^n})$, we have a group scheme~$\underline{\Aut}_x$, given by the following 2-fiber product:
\[
\begin{tikzpicture}
        \node(A){$\underline{\Aut}_x$};
        \node(X) at (0.5,-0.5){\scalebox{1.5}{$\lrcorner$}};
        \node at (2.5,0) (B){$\mathfrak X$};
        \draw[->](A) to (B);
        \node(C)[below of =A]{$\Spec(\bF_{q^n})$};
        \draw[->](A) to (C);
        \node(D)[below of =B]{$\mathfrak X \times \mathfrak X$};
        \draw[->](B) to node{$\Delta$}(D);
        \draw[->](C) to node{$(x,x)$}(D);
    \end{tikzpicture}
\]
The conditions on~$\mathfrak X$ imply that $\underline{\Aut}_x \to \Spec(\bF_{q^n})$ is finite \'etale and the cardinality of the group $\Aut(x) := \underline{\Aut}_x(\bF_{q^n})$ 
is coprime to~$p$. We denote by~$[\mathfrak X(\bF_{q^n})]$ the finite set of isomorphism classes of objects in the groupoid~$\mathfrak X(\bF_{q^n})$.

\begin{lemma}\label{lem:Ltraces}
    Let $\underline{\mathcal F} \in \textsf{Crys}^\text{flat}(X,B)$ with $B$ artinian. Consider the map $\text{d}\log \colon 1 + tB[\![t]\!] \to B[\![t]\!]$ sending $f \mapsto f'/f$. Then we have
    \[
    t \cdot \text{d}\log L(X,\underline{\mathcal F},t) = \sum_{n \geq 1} \sum_{x \in X(\bF_{q^n})} \Tr_{\bF_{q^n} \otimes B}\left(\tau^{n} \ \bigg{|} \ \widehat{x^*\mathcal F} \right)t^{n}.
    \]
\end{lemma}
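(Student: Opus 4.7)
The plan is to compute both sides by standard manipulations and match them via a Galois/trace reorganization over each closed point.

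First I would unwind the logarithmic derivative. By Proposition~\ref{prop:art-perf}, $\widehat{i_x^*\mathcal F}$ is a locally free (hence finitely generated projective) $\mathbb F_{q^{\deg x}} \otimes B$-module, and thus a finitely generated projective $B$-module, on which $\tau$ acts $B$-linearly. The standard power series identity
\[
\log \det_B(1 - tA)^{-1} = \sum_{m \geq 1} \Tr_B(A^m)\,\frac{t^m}{m}
\]
for any endomorphism $A$ of such a module, applied termwise in the Euler product for $L(X,\underline{\mathcal F},t)$, yields
\[
t \cdot \mathrm{d}\log L(X,\underline{\mathcal F},t) \;=\; \sum_{y \in |X|} \sum_{m \geq 1} \Tr_B\!\left(\tau^m \,\big|\, \widehat{i_y^*\mathcal F}\right) t^m.
\]

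Next I would recognize $\tau$ on $\widehat{i_y^*\mathcal F}$ (for $y$ of degree $d$) as $(\sigma \otimes \mathrm{id}_B)$-semilinear, where $\sigma \in \Gal(\mathbb F_{q^d}/\mathbb F_q)$ is the Frobenius. The key lemma I would invoke is the \emph{semilinear trace identity}: if $M$ is a finitely generated projective $L \otimes_K B$-module (where $L/K$ is Galois with cyclic group $\langle\sigma\rangle$ of order $d$) and $f\colon M \to M$ is $(\sigma^j \otimes \mathrm{id})$-semilinear, then
\[
\Tr_B(f \mid M) \;=\; \begin{cases} \Tr_{(L \otimes B)/B}\!\bigl(\Tr_{L \otimes B}(f \mid M)\bigr), & j \equiv 0 \pmod d, \\ 0, & \text{otherwise.} \end{cases}
\]
This follows from the structural decomposition $L \otimes_K L \cong \prod_{i=0}^{d-1} L$ given by the embeddings, which makes the semilinear picture diagonal; it is a classical computation that I would include as a short sublemma. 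Applying this to $\tau^n$ gives $\Tr_B(\tau^n \mid \widehat{i_y^*\mathcal F}) = 0$ unless $d \mid n$, in which case it equals $\Tr_{(\mathbb F_{q^d} \otimes B)/B}\bigl(\Tr_{\mathbb F_{q^d}\otimes B}(\tau^n \mid \widehat{i_y^*\mathcal F})\bigr)$.

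Finally I would match this with the right-hand side. An $\mathbb F_{q^n}$-point $x$ of $X$ above a closed point $y \in |X|$ of degree $d$ exists iff $d \mid n$, and such lifts are in bijection with the $d$ embeddings $\phi_i \colon \mathbb F_{q^d} \hookrightarrow \mathbb F_{q^n}$, $i = 0, \dots, d-1$, which form a single $\Gal(\mathbb F_{q^n}/\mathbb F_q)$-orbit under post-composition by Frobenius. For each lift, $\widehat{x^*\mathcal F} \cong \widehat{i_y^*\mathcal F} \otimes_{\mathbb F_{q^d},\phi_i} \mathbb F_{q^n}$, and the $\mathbb F_{q^n}\otimes B$-linear trace of $\tau^n$ on it is the image of $\Tr_{\mathbb F_{q^d}\otimes B}(\tau^n \mid \widehat{i_y^*\mathcal F})$ under $\phi_i \otimes \mathrm{id}_B$. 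Summing over the $d$ lifts therefore produces
\[
\sum_{i=0}^{d-1} (\phi_i \otimes \mathrm{id}_B)\bigl(\Tr_{\mathbb F_{q^d}\otimes B}(\tau^n \mid \widehat{i_y^*\mathcal F})\bigr) \;=\; \Tr_{(\mathbb F_{q^d}\otimes B)/B}\bigl(\Tr_{\mathbb F_{q^d}\otimes B}(\tau^n \mid \widehat{i_y^*\mathcal F})\bigr),
\]
where I use that the sum over the Galois orbit of embeddings realizes the field-theoretic trace. Comparing with the semilinear trace identity and summing over all $y$ gives the claimed formula.

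The main obstacle is the semilinear trace identity together with the bookkeeping that identifies the sum of $\phi_i$-conjugate traces with the trace of the ring extension $\mathbb F_{q^d}\otimes B \to B$; once that is set up cleanly, the rest is a reindexing of an absolutely convergent (in the $t$-adic sense) double sum.
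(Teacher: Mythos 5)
Your argument is correct and follows the same overall skeleton as the paper's proof --- expand $t\,\text{d}\log$ of the Euler product into traces over closed points, show that the $B$-trace of $\tau^n$ at a closed point of degree $d$ vanishes unless $d\mid n$ and otherwise reassembles from the residue-field trace, then reindex over $\bF_{q^n}$-points --- but you justify the middle step by a different key lemma. The paper obtains both the vanishing and the multiplicity $d$ in one stroke from the determinant identity $\det_B(1-t\tau \mid \widehat{i_x^*\mathcal F}) = \det_{k_x\otimes B}(1-t^d\tau^d \mid \widehat{i_x^*\mathcal F})$ of \cite[Lemma 8.1.4]{BP} together with the chain rule, and then observes that all $d$ lifts of $x$ contribute the \emph{same} element of $B$ (implicitly using that $\Tr_{k_x\otimes B}(\tau^n)$ is $\sigma\otimes\id$-invariant, hence lies in $B$), so the sum over lifts is $d$ times it. You instead prove a self-contained semilinear trace identity via the splitting $L\otimes_K L\cong\prod L$ and interpret the sum over lifts as a sum of Galois conjugates, i.e.\ as the trace of the finite free extension $B\to\bF_{q^d}\otimes B$ applied to the relative trace. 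The two bookkeepings agree precisely because that relative trace is Frobenius-invariant; your version has the mild advantages of not needing to notice this and of not outsourcing the key input to \cite{BP}, at the cost of having to prove the semilinear sublemma. One formal point to repair: the identity you quote, $\log\det_B(1-tA)^{-1}=\sum_m\Tr_B(A^m)t^m/m$, is meaningless over an $\bF_q$-algebra because of the denominators $m$; state and use directly the denominator-free form $-t\cdot\text{d}\log\det_B(1-tA)=\sum_{m\ge 1}\Tr_B(A^m)t^m$, which is a polynomial identity valid over any commutative ring and is what your computation actually relies on (this is also the form the paper invokes).
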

\begin{proof}
Using the definition of the $L$-function, basic properties of $\text{d}\log$, and the fact that
\[
-t \cdot \text{d}\log \det_R(1-t\psi \ | \ M) = \sum_{n \geq 1} \Tr_R(\psi^n \ | \ M)t^n 
\]
for any endomorphism $\psi$ of a projective $R$-module $M$ (see e.g.\ \cite[Thm 4.7]{gek-sny}), we obtain the formula
\[
t \cdot \text{d}\log L(X,\underline{\mathcal F},t) = \sum_{x \in |X|} \sum_{n \geq 1} \Tr_{B} \left( \tau^{n} \ \bigg{|} \ \widehat{i_x^*\mathcal F}\right) t^{n}.
\]
Next, for a closed point $x \in |X|$ with residue field~$k_x$ of degree~$d$, we have
    \[
    \det_B \left(1 - t\tau \ \bigg{|} \ \widehat{i_x^* \mathcal F} \right) = \det_{k_x \otimes B}\left( 1-t^d\tau^d \ \bigg{|} \ \widehat{i_x^*\mathcal F}\right ),
    \]
by \cite[Lemma 8.1.4]{BP}. Using the chain rule, this gives 
\[
\Tr_B \left(\tau^n \ \bigg{|} \ \widehat{i_x^*\mathcal F}\right) = \begin{cases}
    d \Tr_{k_x \otimes B} \left(\tau^{n} \ \bigg{|} \ \widehat{i_x^*\mathcal F}\right) & \text{ if }d \mid n; \\ 0 & \text{otherwise.}
\end{cases}
\]
Finally, if $d \mid n$, there are precisely $d$ maps $x_1,\ldots,x_d \in X(\bF_{q^n})$ with image~$x$. For any such map~$x_j = i_x \circ \overline{x}_j$, we have
\[
\Tr_{\bF_{q^n} \otimes B}\left(\tau^n \ \bigg{|} \ \widehat{x_j^*\mathcal F}\right) = \Tr_{\bF_{q^n} \otimes B}\left(\tau^n \ \bigg{|} \ \overline{x}_j^* \widehat{i_x^*\mathcal F}\right) = \Tr_{k_x \otimes B}\left(\tau^n \ \bigg{|} \  \widehat{i_x^*\mathcal F} \right).
\]
Combining the above gives the formula from the lemma.
\end{proof}

\begin{definition}\label{def:l-series}
    Suppose $B$ is an artinian $\bF_q$-algebra, and let $\underline{\cF} \in \textsf{Crys}^\text{flat}(\mathfrak X,B)$. The \emph{crystalline $l$-series} associated to~$\underline{\cF}$ is the power series
\[
    l(\mathfrak X,\underline{\mathcal F},t) = \sum_{n \geq 1}\sum_{x \in [\mathfrak X(\bF_{q^n})]}\frac{\Tr_{\bF_{q^n} \otimes B}\left(\mathcal \tau^n \ \bigg{|} \ \widehat{x^*\mathcal F}\right)}{\#\Aut(x)}t^n \in tB[\![t]\!].
\]
For general good coefficient rings~$B$ and $\underline{\mathcal F} \in \textsf{Crys}^\text{flat}(X,B)$, define
\[
l( X,\underline{\mathcal F},t) = l(X, \underline{\mathcal F} \otimes_B Q_B,t) \in tB[\![t]\!].
\]
For a derived crystal $\underline{\mathcal F}^\bullet \in D^b(\textsf{Crys}^\text{flat}(X,B))$, define
\[
l(X,\underline{\mathcal F}^\bullet,t) = \sum_{i \in \mathbb Z} (-1)^i l( X,\underline{\mathcal F}^i,t).
\]
\end{definition}


\begin{remark}
It follows from Lemma \ref{lem:Ltraces} that $l(\mathfrak X,\underline{\mathcal F}^\bullet,t) = t \cdot \text{d}\log L(\mathfrak X,\underline{\mathcal F}^\bullet,t)$ if $\mathfrak X$ is a scheme.
\end{remark}

\begin{lemma}\label{lem:l-additive}
    Given a distinguished triangle $\underline{\cF}^\bullet \to \underline{\mathcal G}^\bullet \to \underline{\mathcal H}^\bullet \to \underline{\cF}^\bullet[1]$ of flat derived crystals on $\mathfrak X$, we have
    \[
    l(\mathfrak X,\underline{\mathcal G}^\bullet,t) = l(\mathfrak X,\underline{\cF}^\bullet,t) + l(\mathfrak X,\underline{\mathcal H}^\bullet,t).
    \]
\end{lemma}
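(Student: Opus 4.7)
My plan is to reduce the statement to two building blocks: additivity of the $l$-series on short exact sequences of flat crystals, and vanishing of the $l$-series on acyclic bounded complexes. First, I will reduce to the case $B$ artinian: both sides are by definition computed after applying $-\otimes_B Q_B$, which is exact on crystals (since $B\hookrightarrow Q_B$ is flat) and hence preserves distinguished triangles. For $B$ artinian, Proposition~\ref{prop:art-perf}(2) ensures that perfection sends flat crystals on $\Spec(\bF_{q^n})$ to locally free $\bF_{q^n}\otimes B$-modules equipped with a $\tau^n$-action.

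For the first building block, given a short exact sequence $0 \to \underline{\cF} \to \underline{\mathcal G} \to \underline{\mathcal H} \to 0$ of flat crystals on $\mathfrak X$ and $x \in \mathfrak X(\bF_{q^n})$, both $x^*$ (Corollary~\ref{cor:crys_pullback}, preserving flatness by Theorem~\ref{thm:functors}) and $\overline{\text{perf}}$ (Lemma~\ref{lem:perf}) are exact, yielding a short exact sequence of locally free modules with compatible $\tau^n$-action. Additivity of trace, weighted by $1/\#\Aut(x)$ (a unit in $B$ by tameness) and summed over $[\mathfrak X(\bF_{q^n})]$ and $n \geq 1$, delivers $l(\underline{\mathcal G}) = l(\underline{\cF}) + l(\underline{\mathcal H})$.

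For the second building block, let $\underline{\mathcal P}^\bullet$ be a bounded complex of flat crystals that is acyclic in $D^b(\textsf{Crys}(\mathfrak X,B))$; its cohomology $\tau$-sheaves are then nilpotent. Because perfection annihilates nilpotent $\tau$-sheaves (the colimit defining $\widehat{\mathcal F}$ vanishes when $\tau^n = 0$), the exact functor $\overline{\text{perf}}\circ x^*$ produces an acyclic bounded complex of locally free $\bF_{q^n}\otimes B$-modules with $\tau^n$-endomorphism. The Hopf trace formula forces $\sum_i (-1)^i \Tr(\tau^n \mid \widehat{x^*\mathcal P^i}) = 0$, so $l(\mathfrak X,\underline{\mathcal P}^\bullet,t) = 0$. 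Since $\text{Cone}(f)^i = Q^i \oplus P^{i+1}$ termwise, the first building block immediately gives $l(\text{Cone}(f)) = l(Q^\bullet) - l(P^\bullet)$ for any $f\colon P^\bullet \to Q^\bullet$ of bounded complexes of flat crystals; thus quasi-isomorphisms preserve the $l$-series.

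To conclude, I will represent the given distinguished triangle, up to isomorphism of triangles in $D^b(\textsf{Crys}^{\text{flat}}(\mathfrak X,B))$, by a mapping cone $A^\bullet \xrightarrow{g} B^\bullet \to \text{Cone}(g) \to A^\bullet[1]$ with $A^\bullet \simeq \underline{\cF}^\bullet$, $B^\bullet \simeq \underline{\mathcal G}^\bullet$ and $\text{Cone}(g) \simeq \underline{\mathcal H}^\bullet$. The identity $l(\text{Cone}(g)) = l(B^\bullet) - l(A^\bullet)$ together with quasi-isomorphism invariance then yields $l(\underline{\mathcal G}^\bullet) = l(\underline{\cF}^\bullet) + l(\underline{\mathcal H}^\bullet)$. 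The main obstacle is the vanishing statement for acyclic complexes, which bridges the derived-categorical notion of acyclicity and the module-theoretic Hopf trace formula via the key property that perfection annihilates nilpotent crystals while sending flat ones to locally free modules.
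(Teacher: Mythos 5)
Your proof is correct and follows essentially the same route as the paper's: reduce to artinian $B$ via $-\otimes_B Q_B$, then work pointwise using exactness of pullback and of the perfection functor together with additivity of traces on short exact sequences of locally free modules. The paper's version is terser---it simply asserts that one may reduce to a distinguished triangle induced by a short exact sequence of complexes---whereas you make that reduction explicit via the mapping cone, quasi-isomorphism invariance, and the vanishing of the $l$-series on acyclic complexes, which fills in bookkeeping the paper leaves implicit.
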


\begin{proof}
    It suffices to prove the statement for artinian $B$ and for a distinguished triangle induced by a short exact sequence of complexes. By the definition of $l$-series, it suffices to prove it in a single degree and at a single point $x \in \mathfrak X(\bF_{q^n})$. In this setting, the statement follows because the perfection functor is exact and traces are additive in short exact sequences.
\end{proof}

\subsection{The trace formula}

One of the main results of \cite{BP} is the trace formula: for a morphism $f\: Y \to X$ of schemes of finite type and $\underline{\mathcal F}^\bullet \in D^b(\textsf{Crys}^\text{flat}(Y,A))$, we have
\[
L(Y,\underline{\mathcal F}^\bullet,t) \sim_{\text{uni}} L(X,Rf_!\underline{\mathcal F}^\bullet,t),
\]
where $\sim_{\text{uni}}$ means that the quotient of these power series is a unipotent polynomial; that is, an element in $P \in 1 + B[t]$ such that $P-1$ is nilpotent. 
We now state the main result of this section.

\begin{theorem}[Lefschetz trace formula]\label{thm:trace_formula}
    Let $B$ be a good coefficient ring. Let $f\: \mathfrak Y \to \mathfrak X$ be a compactifiable morphism of tame Deligne--Mumford stacks of finite type over~$\mathbb F_q$. Then for any $\underline{\mathcal F}^\bullet \in D^b(\textsf{Crys}^\text{flat}(\mathfrak Y,B))$, we have
\[
l(\mathfrak Y,\underline{\mathcal F}^\bullet,t) \sim_{\text{nil}} l(\mathfrak X, Rf_!\underline{\mathcal F}^\bullet,t),
\]
where $\sim_{\text{nil}}$ means that the difference of these power series is~$tP$ for some nilpotent polynomial~$P$. In particular, if~$B$ is reduced, the $l$-series are equal.
\end{theorem}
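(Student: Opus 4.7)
The plan is to reduce the theorem to the scheme-level trace formula of~\cite{BP} by successive reductions: first on the coefficient ring $B$ and on the complex $\underline{\mathcal F}^\bullet$, then on the base stack $\mathfrak X$, and finally on $\mathfrak Y$ itself.

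Since $B$ is a good coefficient ring and $B \hookrightarrow Q_B$ is flat, the functor $-\otimes_B Q_B$ is exact on crystals, commutes with $Rf_!$, and preserves $l$-series by definition in the non-artinian case; we may therefore assume that $B$ is artinian. Applying Lemma~\ref{lem:l-additive} to the distinguished triangles obtained from the stupid filtration of $\underline{\mathcal F}^\bullet$ reduces further to the case of a single flat crystal $\underline{\mathcal F}$ concentrated in degree zero.

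Next, I would compare coefficients of $t^n$ on both sides fiber-by-fiber over $\mathfrak X$. For each $y \in \mathfrak X(\mathbb F_{q^n})$, proper base change (Theorem~\ref{thm:functors}) gives $y^* Rf_! \underline{\mathcal F} \cong R(f_y)_!(\underline{\mathcal F}|_{\mathfrak Y_y})$, where $\mathfrak Y_y := \mathfrak Y \times_{\mathfrak X} \Spec(\mathbb F_{q^n})$ is again a tame DM stack. Grouping the points of $\mathfrak Y(\mathbb F_{q^n})$ by their image in $\mathfrak X(\mathbb F_{q^n})$ and applying the standard groupoid integration identity, the coefficient of $t^n$ in $l(\mathfrak Y, \underline{\mathcal F}, t)$ becomes
\[
\sum_{y \in [\mathfrak X(\bF_{q^n})]} \frac{1}{\#\Aut(y)} \sum_{(x,\alpha) \in [\mathfrak Y_y(\bF_{q^n})]} \frac{\Tr(\tau^n \mid \widehat{x^*\mathcal F})}{\#\Aut(x,\alpha)},
\]
and the inner sum is precisely the $t^1$-coefficient of the $l$-series of $\mathfrak Y_y$ viewed as a tame DM stack over $\mathbb F_{q^n}$. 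Matching this with the analogous decomposition of $l(\mathfrak X, Rf_!\underline{\mathcal F}, t)$ reduces the theorem to the absolute case $\mathfrak X = \Spec(k)$ with $k$ a finite field.

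In this absolute case, I would descend along an étale atlas. By the local structure theorem for tame DM stacks, $\mathfrak Y$ is étale-locally of the form $[U/G]$ with $U$ a scheme and $G$ finite of order coprime to~$p$. For such a local quotient, the tame invariance $R\Gamma_c([U/G],\underline{\mathcal F}) \cong R\Gamma_c(U, \pi^* \underline{\mathcal F})^G$ combined with the orbit-stabilizer identity
\[
\sum_{[u]\in [U/G](\bF_{q^m})} \frac{\Tr(\tau^m \mid \widehat{u^*\mathcal F})}{\#\Aut([u])} \;=\; \frac{1}{\#G}\sum_{u \in U(\bF_{q^m})} \Tr(\tau^m \mid \widehat{u^*\mathcal F})
\]
reduces the desired identity (modulo nilpotents) to an equivariant version of the scheme trace formula applied to~$U$, obtained by averaging over $g \in G$ the Lefschetz trace formula for the twisted Frobenius $\tau^m g$. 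The main obstacle is globalising this étale-local argument: one must glue along the cover using the descent results of Section~\ref{sec:descent}, track the twisted $G$-actions on cohomology coming from the descent data for $\underline{\mathcal F}$, and control the degrees of the nilpotent correction polynomials arising at each local piece uniformly enough to combine into a single $tP$. Tameness (Lemma~\ref{lem:aut_order}) is essential both to invert $\#G$ in the artinian $\mathbb F_q$-algebra~$B$ and to ensure that $U \to [U/G]$ is cohomologically well-behaved; in the case where $B$ is reduced, all nilpotent corrections vanish and one obtains the stated equality on the nose.
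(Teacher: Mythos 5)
Your opening reductions (pass to $Q_B$ to assume $B$ artinian, use Lemma~\ref{lem:l-additive} to reduce to a single flat crystal, and reduce via proper base change to the absolute case over a finite field) match the paper's steps 1--2 and 5. But the heart of your argument has a genuine gap, in two places. First, the displayed ``orbit-stabilizer identity'' over $[U/G]$ is false as stated: the groupoid $[U/G](\bF_{q^m})$ is \emph{not} $U(\bF_{q^m})/G$, because it also contains points classified by nontrivial $G$-torsors over $\bF_{q^m}$ (twisted forms). For instance, $[\bG_m/\mu_n]\cong\bG_m$ has $q-1$ points over $\bF_q$, not $(q-1)/n$. The correct statement replaces the right-hand side by a sum over $\sigma$-conjugacy classes of $g\in G$ of traces of the twisted Frobenii $\tau^m g$ on twisted fixed loci; you gesture at this when you mention ``averaging over $g\in G$ the trace formula for $\tau^m g$,'' but the identity you actually write down, which is the pivot of your local computation, omits the twisted forms. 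Second, you correctly identify the globalisation of the \'etale-local $[U/G]$ presentation as ``the main obstacle'' but do not resolve it, and the equivariant trace formula for a twisted Frobenius $\tau^m g$ acting on a crystal on a scheme $U$ is not available in \cite{BP} --- it would be an additional theorem to prove, not a citation.

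The paper circumvents both difficulties. After reducing to the structure morphism, it invokes \emph{noetherian induction} on $|\mathfrak Y|$ (cf.\ \cite[Lemma 9.6.8]{BP}), so that it suffices to prove the formula on a dense open substack; there one may assume $\mathfrak Y$ is a quotient stack with a coarse moduli \emph{scheme} $\pi\:\mathfrak Y\to Y$. The scheme case handles $Y\to\Spec(\bF_q)$, so only $\pi$ remains, and its fibers over closed points are gerbes, which are neutral over finite fields, hence of the form $BG\to\Spec(\bF_{q^d})$. Proposition~\ref{prop:BG} then does the $BG$ computation by hand, with the twisted-form bookkeeping done correctly: $[BG(k)]$ is the set of $\sigma^n$-conjugacy classes $g\sim xg\sigma^n(x^{-1})$, and the identity reduces to the representation-theoretic projection $m\mapsto\frac{1}{\#G}\sum_g gm$ onto $M^G$, where tameness is used to invert $\#G$. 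This route needs no equivariant scheme-level trace formula and no gluing of local quotient presentations. A further minor point: your coefficient-by-coefficient comparison would at best show each coefficient of the difference is nilpotent, not that the difference is a nilpotent \emph{polynomial}; the finiteness comes out of the noetherian induction (finitely many strata, each contributing a polynomial correction), which your argument does not supply.
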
 

We will prove the trace formula by hand for the classifying stack of a finite \'etale group scheme. Combining this with the trace formula for schemes will then imply the theorem in the generality stated above.

\begin{lemma}
    Theorem \ref{thm:trace_formula} holds when $\mathfrak X$ and $\mathfrak Y$ are schemes.
\end{lemma}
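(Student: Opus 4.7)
The plan is to reduce the scheme case to the Böckle–Pink trace formula recalled at the start of the subsection via the identity $l(\cdot) = t \cdot \text{d}\log L(\cdot)$ from the remark above, which holds precisely when the base is a scheme.

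First I would invoke Böckle–Pink to produce a unipotent polynomial $P(t) \in 1 + B[t]$, with $N := P - 1$ nilpotent in $B[t]$, satisfying
\[
L(\mathfrak Y, \underline{\mathcal F}^\bullet, t) = L(\mathfrak X, Rf_!\underline{\mathcal F}^\bullet, t) \cdot P(t)
\]
in $1 + tB[\![t]\!]$. Second, I would apply the operator $t \cdot \text{d}\log$ to both sides. Since $\text{d}\log$ converts products into sums and the remark identifies $t \cdot \text{d}\log L$ with $l$ on schemes, this yields
\[
l(\mathfrak Y, \underline{\mathcal F}^\bullet, t) - l(\mathfrak X, Rf_!\underline{\mathcal F}^\bullet, t) = t \cdot \text{d}\log P(t).
\]

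The remaining point is to verify that $t \cdot \text{d}\log P(t)$ has the form $t \cdot Q(t)$ for a nilpotent polynomial $Q \in B[t]$. Choose $k$ with $N^k = 0$; then $P^{-1} = \sum_{i=0}^{k-1}(-N)^i$ lies in $B[t]$, so $\text{d}\log P = N' P^{-1}$ is a polynomial. Using $\text{Nil}(B[t]) = \text{Nil}(B)[t]$, the coefficients of $N$ all lie in $\text{Nil}(B)$, hence so do those of $N'$ and therefore of $N' P^{-1}$; in particular $N' P^{-1}$ is nilpotent in $B[t]$.

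I do not anticipate a genuine obstacle here; the argument is a routine composition of three ingredients. The only care needed lies in two mild subtleties: (i) extending from artinian $B$ to an arbitrary good coefficient ring, which is transparent given the definitions of $L$ and $l$ via $-\otimes_B Q_B$ and the flatness of $B \hookrightarrow Q_B$; and (ii) passing from bounded complexes $\underline{\mathcal F}^\bullet$ to single crystals, which is automatic because $L$ and $l$ are defined by matching alternating recipes, so multiplicativity of $t \cdot \text{d}\log$ reduces the complex case to the concentrated case termwise.
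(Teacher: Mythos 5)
Your proposal is correct and follows the same route as the paper: reduce to artinian $B$ via $Q_B$, invoke the Böckle–Pink trace formula $L(\mathfrak Y,\underline{\mathcal F}^\bullet,t)\sim_{\text{uni}}L(\mathfrak X,Rf_!\underline{\mathcal F}^\bullet,t)$, apply $t\cdot\mathrm{d}\log$ together with the identity $l=t\cdot\mathrm{d}\log L$ on schemes, and check that $\mathrm{d}\log$ of a unipotent polynomial is nilpotent. The paper's proof is a one-line citation of exactly these three ingredients; you have merely spelled out the (correct) verification that $N'P^{-1}$ has coefficients in $\mathrm{Nil}(B)$.
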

\begin{proof}
    By \cite[Thm 9.8.2]{BP}, we may replace~$B$ with~$Q_B$, so in particular we may assume~$B$ is artinian. Thus the lemma follows from \cite[Thm 9.6.5]{BP}, Lemma~\ref{lem:Ltraces}, and the fact that $\text{d}\log P$ is nilpotent if~$P$ is unipotent.
\end{proof}

\begin{proposition}\label{prop:BG}
    Fix $d \geq 1$. Let $G$ be a finite \'etale group scheme over $\Spec(\bF_{q^d})$ of order coprime to~$p$. Let $s \: \mathcal{B}G = [*/G] \to \Spec(\bF_{q^d})$ be the classifying stack of~$G$, and let $\underline{\mathcal F} \in \textsf{Crys}^\text{flat}(\mathcal{B}G,B)$ be a flat $B$-crystal on~$\mathcal{B}G$. Then we have
    \[
    l(\mathcal{B}G,\underline{\mathcal F},t) = l(\Spec(\bF_{q^d}), Rs_!\underline{\mathcal F},t).
    \]
\end{proposition}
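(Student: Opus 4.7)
The strategy is to reduce to an artinian coefficient ring, use descent to view $\underline{\mathcal F}$ as a $G$-equivariant flat $\tau$-sheaf on the \'etale atlas $\Spec(\bF_{q^d}) \to BG$, identify $Rs_!\underline{\mathcal F}$ with its $G$-invariants, and then verify the identity of $l$-series coefficient-by-coefficient in~$t$. First, base-changing along the flat map $B \hookrightarrow Q_B$ (which commutes with the constructions in Definition~\ref{def:l-series}), I may assume $B$ is artinian; then $|G|$ is a unit in $B$ since it is coprime to~$p$. Descent for crystals (Section~\ref{sec:descent}) along $\pi\: \Spec(\bF_{q^d}) \to BG$ identifies $\underline{\mathcal F}$ with a $G$-equivariant flat $\tau$-sheaf $(\underline{V},\rho)$ on $\Spec(\bF_{q^d})$. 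Since $G$ is finite, $s$ is proper and $Rs_! = Rs_*$; the averaging operator $e_G = \tfrac{1}{|G|}\sum_g \rho(g)$ is then a well-defined exact projector, and it identifies $Rs_*\underline{\mathcal F}$ with the $G$-invariants $(\underline{V})^G$.

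Next, I unpack both sides at each level~$n$, assuming $d \mid n$ (both sides vanish otherwise). The scheme $\Spec(\bF_{q^d})$ has exactly $d$ $\bF_{q^n}$-points, corresponding to embeddings $f\: \bF_{q^d} \hookrightarrow \bF_{q^n}$. For each $f$, isomorphism classes of $\bF_{q^n}$-points of $BG$ above $f$ are $G_f$-torsors $P_g$ classified by $\sigma^n$-twisted conjugacy classes $[g] \in G(\overline{\bF_q})/\!\sim$ (where $g_1 \sim g_2$ iff $g_1 = h g_2 \sigma^n(h)^{-1}$ for some $h$), and $|\Aut(P_g)|$ equals the cardinality of the twisted centralizer $\{h \in G(\overline{\bF_q}) : \sigma^n(h) = g^{-1}hg\}$. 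By orbit-stabilizer, for any $\sim$-invariant function $F$,
\[
\sum_{[g]} \frac{F(g)}{|\Aut(P_g)|} \;=\; \frac{1}{|G(\overline{\bF_q})|} \sum_{g \in G(\overline{\bF_q})} F(g).
\]

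The heart of the argument is a twisted trace identity: the pullback $x_{P_g}^*\mathcal F$ is the twist of $(V,\rho)$ by the cocycle $\sigma^n \mapsto g$, so after base change to $\overline{\bF_q}$ its $\tau^n$-action coincides with $\tau_V^n \circ \rho(g)$ on $V \otimes_f \overline{\bF_q}$, yielding
\[
\Tr_{\bF_{q^n} \otimes B}\bigl(\tau^n \,\big|\, \widehat{x_{P_g}^*\mathcal F}\bigr) = (f \otimes \id)\Tr_{\bF_{q^d} \otimes B}\bigl(\rho(g)\tau_V^n \,\big|\, \widehat{V}\bigr),
\]
which is a class function of $g$ by cyclicity of trace. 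Combining this with orbit-stabilizer, the LHS at level~$n$ becomes $\tfrac{1}{|G|}\sum_f \sum_g (f \otimes \id)\Tr(\rho(g)\tau_V^n \,|\, \widehat{V})$. On the RHS, the averaging projector gives $\Tr(\tau_V^n \,|\, \widehat{(V)^G}) = \Tr(e_G \tau_V^n \,|\, \widehat{V}) = \tfrac{1}{|G|}\sum_g \Tr(\rho(g)\tau_V^n \,|\, \widehat{V})$, and summing over the $d$ embeddings yields the identical expression. The main obstacle will be the twisted trace identity, which requires carefully unwinding the $G$-equivariant descent datum of the torsor $P_g$ and the interaction of the perfection functor with pullback along the nontrivial cover $P_g \to \Spec(\bF_{q^n})$.
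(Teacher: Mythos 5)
Your proposal is correct and follows essentially the same route as the paper: reduce to artinian $B$, use descent to view $\underline{\mathcal F}$ as a $G$-representation on the atlas, identify $Rs_!\underline{\mathcal F}$ with the $G$-invariants via the averaging idempotent (the paper cites tameness for exactness of $s_*$), describe $[BG(\bF_{q^n})]$ by (twisted) conjugacy classes, and conclude by orbit--stabilizer together with the trace of the averaging projector. The only cosmetic differences are that the paper first reduces to $d=1$ and treats constant $G$ before the twisted case, whereas you handle the general \'etale group and the $d$ embeddings uniformly from the start.
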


\begin{proof}
  By replacing $q$ with $q^d$ and $\tau$ with $\tau^d$, we may assume that $d = 1$.
  We further assume that $B$ is artinian and $\underline{\cF} = \widehat{\underline{\cF}}$.
  The finite \'etale group $G$ is equivalent to an abstract finite group $\bar{G} := G(\overline{\bF}_q)$ with a group automorphism $\sigma \: \bar{G} \to \bar{G}$ induced by the action of $\Gal(\bF_q)$. We extend this correspondence to $G$-torsors as in \cite[Prop.~2.3.2]{behrend_thesis}; see also \cite[\mbox{}7.8--7.10]{olsson_fujiwara}. 
  
  Let $k/\bF_{q}$ be a field extension of degree~$n$. Define a group action of~$\bar{G}$ on itself via $x \star_n g = xg\sigma^n(x^{-1})$. Then $\mathcal{B}G(k)$ is equivalent to the action groupoid for~$\star_n$. Explicitly, the groupoid $\mathcal{B}G(k)$ has as objects the elements of~$\bar{G}$ and as morphisms
\[
    \text{Hom}_{\mathcal{B}G(k)}(g,h) = \{ x \in \bar{G} \ | \ xg\sigma^n(x^{-1}) = h \}.
\]
In particular, $[\mathcal{B}G(k)] = \bar{G}/\star_n$, the set of orbits in $\bar{G}$ under the action $\star_n$. Moreover, $\Aut_{\mathcal{B}G(k)}(g)$ is identified with the stabilizer of~$g$.
  
  Since $\underline{\cF}$ is flat, it is represented by a locally free $\tau$-sheaf on~$\mathcal{B}G$. Applying Proposition~\ref{prop:tau_descent} to the \'etale cover $\Spec(\bF_q) \to \mathcal{B}G$ induced by the trivial torsor, we see that this data is equivalent to a pair $(M,\tau)$, where $M$ is a $B$-module equipped with a $G$-action and $\tau \: \sigma^*M \to M$ is a $G$-linear map, where $G$ acts on $\sigma^*M$ through $\sigma^{-1}$. Let $\bar{M} := \overline{\bF}_q \otimes M$ denote the induced $\bar{G}$-representation, and $\bar{\tau} := \text{id} \otimes \tau$ the $\overline{\bF}_q \otimes B$-linear endomorphism satisfying $\bar{\tau}(gm) = \sigma(g) \bar{\tau}(m)$. If $[g] \in \bar{G}/\star_n$ represents an $\bF_{q^n}$-point of $\mathcal{B}G$, then $\overline{\bF}_q \otimes_{\bF_{q^n}} [g]^*M$ may be identified with $\bar{M}$. The induced endomorphism $\bar{\tau}^n_{[g]}$ of $\bar{M}$ is then conjugate to $\bar{\tau}^n \circ g$ (note that this does not depend on the chosen representative of~$[g]$).
  
  Since $\mathcal{B}G$ is proper, we have $Rs_! = Rs_*$.
  The functor $s_*$ 
  is given by taking $G$-invariants (note that these are respected by~$\tau$).
  Since $\mathcal{B}G$ is tame, $s_*$ is exact.
  We conclude that $Rs_!\underline{\cF} \cong s_*\underline{\cF}$, which is represented by $(M^G,\tau)$.
Now fix $n \geq 1$. We have
    \begin{align*}
        \sum_{x \in [\mathcal{B}G(\bF_{q^n})]}\frac{\Tr_{\bF_{q^n} \otimes B}(\mathcal \tau^n \hspace{0.2em} | \hspace{0.3em} x^*\mathcal F)}{\#\Aut(x)} &= \sum_{[g] \in \bar{G}/\star_n} \frac{\Tr_{\bF_{q^{n}} \otimes B}(\tau^n \hspace{0.2em} | \hspace{0.3em} [g]^*M)}{\# \text{Stab}_{\bar{G}}(g)} \\
        &= \sum_{[g] \in \bar{G}/\star_n} \frac{\Tr_{\overline{\bF}_q \otimes B}(g \circ \bar{\tau}^n \hspace{0.2em} | \hspace{0.3em} \bar{M})}{\# \text{Stab}_{\bar{G}}(g)} \\
        &= \sum_{g \in \bar{G}} \frac{\Tr_{\overline{\bF}_q \otimes B}(g \circ \bar{\tau}^n \hspace{0.2em} | \hspace{0.3em} \bar{M})}{\# \bar{G}} \\
        &= \Tr_{\overline{\bF}_q \otimes B}\left( \frac{1}{\# \bar{G}} \sum_{g \in \bar{G}} g \circ \bar{\tau}^n \ \bigg{|} \ \bar{M} \right) \\
        &= \Tr_{\overline{\bF}_q \otimes B}( \bar{\tau}^n \hspace{0.2em} | \hspace{0.3em} \bar{M}^{\bar{G}}) \\
        &= \Tr_{\bF_{q^n} \otimes B}(\tau^n \hspace{0.2em} | \hspace{0.3em} \bF_{q^n} \otimes M^G),
    \end{align*}
    which verifies the trace formula.
\end{proof}

We now deduce the trace formula as stated in Theorem~\ref{thm:trace_formula}. The following proof is inspired by \cite[\textsection 2]{behrend_thesis}, as well as the treatment~\cite{sun}. It uses noetherian induction on the topological space $|\mathfrak Y| := \mathfrak Y(\overline{\bF}_q) / \Gal(\bF_q)$.

\begin{proof}[Proof of Theorem \ref{thm:trace_formula}]
We apply a sequence of reduction steps.

    1. By Lemma~\ref{lem:l-additive} and the fact that $Rf_!$ preserves distinguished triangles, it suffices to prove the theorem for a single crystal, as we can apply the truncation sequence
    \[
    T_{\leq n}\underline{\cF}^\bullet \longrightarrow \underline{\cF}^\bullet \longrightarrow T_{> n}\underline{\cF}^\bullet \longrightarrow T_{\leq n}\underline{\cF}^\bullet[1]
    \]
    to reduce the statement to complexes of smaller length.

    2. It is enough to show the statement for the structure morphisms $s_{\mathfrak Y}\colon \mathfrak Y \to \Spec(\mathbb F_q)$ for all~$\mathfrak Y$. Indeed, given $f\colon \mathfrak Y \to \mathfrak X$, we have $Rs_{\mathfrak Y,!} \cong Rs_{\mathfrak X,!}Rf_!$ and hence
    \[
    l(\mathfrak Y,\underline{\mathcal F}^\bullet,t) \sim_{\text{nil}} l(\Spec(\mathbb F_q),Rs_{\mathfrak Y,!}\underline{\mathcal F}^\bullet,t) \sim_{\text{nil}} l(\mathfrak X, Rf_!\underline{\mathcal F}^\bullet,t).
    \]
    
    3. By noetherian induction, it is enough to show that the trace formula holds on a non-empty open substack of $\mathfrak Y$, cf.~\cite[Lemma 9.6.8]{BP}. Hence we may assume that $\mathfrak Y$ is a quotient stack; in particular, it admits a coarse moduli scheme $\pi\: \mathfrak Y \to Y$. 
    
    4. It is enough to show the statement for the coarse moduli space~$\pi$. Indeed, we already know the statement for the structure map $s_Y$ since $Y$ is a scheme, and we have $Rs_{\mathfrak Y,!} \cong Rs_{Y,!}R\pi_!$.
    
    5. By proper base change and the local definition of $l$-series, we may take a point $y \in |Y|$ of degree~$d$ and replace~$\pi$ with $\pi_y \colon \mathfrak Y_y \to y$. Then~$\pi_y$ defines an $\underline{\text{Aut}}_y$-gerbe, but any gerbe over a finite field is neutral \cite[Cor.~6.4.2]{behrend_l-adic}.  
    Hence $\pi_y$ is of the form $\mathcal{B}G \to \Spec(\mathbb F_{q^d})$, where $G = \underline{\text{Aut}}_y$ is a finite \'etale group scheme of order coprime to~$p$. Thus we are done by Proposition~\ref{prop:BG}.
\end{proof}
\section{Applications to Drinfeld modular forms}\label{sec:ramanujan}

Our aim in this section is to apply the Lefschetz trace formula to certain crystals on the moduli stack of Drinfeld modules. This will yield a trace formula for Hecke operators on Drinfeld cusp forms, from which we will deduce the Ramanujan bound.

For the remainder of this paper, Drinfeld modules will be assumed to be of rank $r=2$. We briefly recall the definition of Drinfeld modular forms, as well as the Eichler-Shimura theory developed by B\"ockle \cite{bockle}, before proving our main results.

\subsection{Background}

Let $A = \mathcal O_C(C \setminus \{\infty\})$ and $K = \Frac(A)$ as before. Let $K_\infty$ be the completion of $K$ at the place $\infty$, and let $\mathbb C_\infty$ be the completion of an algebraic closure of $K_\infty$. Denote by $\Omega := \mathbb C_\infty \setminus K_\infty$ the Drinfeld upper half-plane, seen as a rigid-analytic space. It has an action of $\operatorname{GL}_2(K_\infty)$ by M\"obius transformations.

A subgroup $\Gamma \subseteq \operatorname{GL}_2(A)$ is called a \emph{congruence subgroup} if it contains
\[
\Gamma(\mathfrak n) = \left \{ 
    M \in \operatorname{GL}_2(A) \ \bigg{|} \ M \equiv \begin{pmatrix} 1 & 0 \\ 0 & 1 
\end{pmatrix} \pmod {\mathfrak{n}} \right \}
\]
for some non-zero ideal $\mathfrak{n} \trianglelefteq A$.

\begin{definition}\label{def:dmf}
    Fix integers $k, l \in \mathbb Z$ and a congruence subgroup $\Gamma$. A \emph{Drinfeld modular form of weight~$k$, type~$l$, and level~$\Gamma$} is a function $f\: \Omega \to \mathbb C_\infty$ satisfying the following properties:
    \begin{enumerate}
        \item For each $\gamma = \begin{pmatrix} a & b \\ c & d \end{pmatrix} \in \Gamma$, we have
        \begin{equation}\label{eq:transf_law}
        f(\gamma z) = \det(\gamma)^{-l} (cz + d)^k f(z).
        \end{equation}
        \item $f$ is holomorphic on $\Omega$ and at the cusps of~$\Gamma$ (see \cite[V.2.4]{gekeler_modcurves}).
    \end{enumerate}
    If additionally $f$ vanishes at all the cusps of~$\Gamma$, we call $f$ a \emph{cusp form}. We denote the space of Drinfeld cusp forms of weight~$k$, type~$l$, and level~$\Gamma$ by~$\S_{k,l}(\Gamma)$.
\end{definition}

\begin{remark}
    By a modular form of level~1, we mean a modular form of level~$\operatorname{GL}_2(A)$. We write~$\S_{k,l}$ for the space of cusp forms of weight~$k$, type~$l$, and level~1.
\end{remark}


\begin{remark}
    In \cite[\textsection 5]{bockle}, B\"ockle defines \emph{adelic Drinfeld modular forms} and shows that spaces of adelic Drinfeld modular forms are naturally isomorphic to spaces of Drinfeld modular forms. The advantage of this is that one can define Hecke operators on Drinfeld modular forms for more general rings~$A$ than just~$\mathbb F_q[T]$. We will use the adelic language where necessary without recalling all details. In particular, we will consider modular forms of level~$\mathcal K$, where $\hat{A} = \varprojlim A/I$ is the completion of~$A$ and $\mathcal K \subset \operatorname{GL}_2(\hat{A})$ is an admissible subgroup. For a prime ideal $\mathfrak p \trianglelefteq A$ not dividing the minimal conductor of~$\mathcal K$, we denote by~$\T_{\mathfrak p}$ the corresponding Hecke operator acting on~$\S_{k,l}(\mathcal K)$.
\end{remark}

\begin{remark}\label{rmk:hecke}
    When $A = \bF_q[T]$, the action of the Hecke operators on adelic cusp forms is different from the Hecke operators defined in \cite{goss_pi-adic,gekeler_coeffs}. For clarity, we distinguish between the two notions of Hecke operators: we retain the notation~$\T_{\mathfrak p}$ for the adelic Hecke operators, and write $\T_{\mathfrak p}^{\mathbb F_q[T]}$ the Hecke operators from loc.\ cit. Let us emphasise an important difference between the two.
    
    From Definition~\ref{def:dmf}, it follows immediately that we have canonical isomorphisms $\S_{k,l} \cong \S_{k,l'}$ for any $l \equiv l' \pmod{q-1}$. This isomorphism is compatible with the Hecke operators $\T_{\mathfrak p}^{\mathbb F_q[T]}$. However, it is not equivariant with respect to the Hecke operators $\T_{\mathfrak p}$: the action differs by a character \cite[Remark 6.12]{bockle}. Given integers $k,l \in \mathbb Z$, we have
\[
\T_{\mathfrak p} = P^{l-k}\T_{\mathfrak p}^{\mathbb F_q[T]},
\]
where $P$ is the monic generator of~$\mathfrak p$. This is why we define $l \in \mathbb Z$ as an integer and not as an element in $\mathbb Z/(q-1)\mathbb Z$.
\end{remark}

\subsection{B\"ockle--Eichler--Shimura theory}

Denote by $\Theta\: \mathfrak M_2 \to \Spec(A)$ the structure morphism of the moduli stack of Drinfeld modules. Fix a maximal ideal $\mathfrak p \trianglelefteq A$ with residue field $\bF_{\mathfrak p}$, corresponding to a map $i_{\mathfrak p} \: \Spec(\bF_{\mathfrak p}) \to \Spec(A)$. Similarly, for the completion $K_\infty$ of $K = \Frac(A)$ at $\infty$, write $i_{K_\infty}\: \Spec(K_\infty) \to \Spec(A)$. 

The \emph{universal Drinfeld module over $\mathfrak M_2$} is the Drinfeld module $\phi_{\text{univ}} := \text{id}\: \mathfrak M_2 \to \mathfrak M_2$. We denote its associated $\tau$-sheaf by $\underline{\cF} := \underline{\mathcal M}(\phi_{\text{univ}})$, cf.~Construction~\ref{constr:ass-tau}.

\begin{definition}
Fix integers $k \geq 2$ and $l \in \mathbb Z$. With notation as above, we define the $A$-crystals
\[
\underline{\cF}_{k,l} := (\det \underline{\cF})^{\otimes l-k+1} \otimes \text{Sym}^{k-2} \underline{\mathcal F}; \qquad \underline{\mathcal S}_{k,l} := R^1\Theta_{!}\underline{\cF}_{k,l}.
\]
We call $\underline{\mathcal S}_{k,l}$ the \emph{crystal of Drinfeld cusp forms of weight~$k$ and type~$l$}.
\end{definition}


Similarly, given an admissible subgroup $\mathcal K \subset \operatorname{GL}_2(\hat{A})$, one has a moduli space $\mathfrak M_{2,\mathcal K}$ of Drinfeld modules with level $\mathcal K$-structure. If $\mathcal K$ has minimal conductor $\mathfrak n \trianglelefteq A$, then the moduli space of Drinfeld modules with level $\mathcal K$-structure is representable by a scheme over $\Spec(A[\mathfrak n^{-1}])$. Forgetting the level structure defines an \'etale cover $\pi_{\mathcal K} \: \mathfrak M_{2,\mathcal K} \to \mathfrak M_2$, and we define
\[
 \underline{\cF}_{k,l}(\mathcal K) := \pi_{\mathcal K}^* \underline{\mathcal F}_{k,l}; \qquad \underline{\mathcal S}_{k,l}(\mathcal K) :=  R^1 \Theta_{\mathcal K,!} \underline{\cF}_{k,l}(\mathcal K).
\]
We call $\underline{\mathcal S}_{k,l}(\mathcal K)$ the \emph{crystal of cusp forms of weight~$k$, type~$l$, and level~$\mathcal K$}. For any prime $\mathfrak p \trianglelefteq A[\mathfrak n^{-1}]$, one has a Hecke operator $\T_{\mathfrak p}$ acting on the crystals of cusp forms via correspondences \cite[Section 13.1]{bockle}.

\begin{lemma}\label{lem:universal-crystals}
    Let $k\geq 2$ and $l \in \mathbb Z$, and let $x \in \mathfrak M_2(\bF_{q^n})$ correspond to the Drinfeld module~$(E,\phi)$.
    \begin{enumerate}
        \item The perfection of $x^* \underline{\cF}_{k,l} \otimes_A K$ is the coherent $\tau$-sheaf
    \[
    \underline{\mathcal M}_{k,l}(\phi)_K := ( (\det \underline{\mathcal M}(\phi))^{\otimes l-k+1} \otimes \Sym^{k-2}\underline{\mathcal M}(\phi)) \otimes_A K.
    \]
    \item The crystals $\underline{\cF}_{k,l}$, $\underline{\cF}_{k,l}(\mathcal K)$, $\underline{\mathcal S}_{k,l}$, and $\underline{\mathcal S}_{k,l}(\mathcal K)$ are flat.
    \end{enumerate}
\end{lemma}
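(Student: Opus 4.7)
The plan is to prove (1) by unwinding Construction~\ref{constr:ass-tau} together with the compatibility of pullback, perfection, tensor products, and $-\otimes_A K$, and then to deduce (2) pointwise via Lemma~\ref{lem:flat}. For (1), since the universal Drinfeld module is $\phi_{\mathrm{univ}} = \mathrm{id}_{\mathfrak M_2}$, functoriality of Construction~\ref{constr:ass-tau} immediately gives $x^*\underline{\cF} = \underline{\mathcal M}(\phi)$, and pullback commutes with tensor products, symmetric powers, and (in the crystal category) duals. The only delicate point is the $(l-k+1)$-th tensor power of the determinant $\det\underline{\mathcal M}(\phi)$ when $l-k+1 < 0$: this becomes an honest $\tau$-sheaf (rather than merely a crystal) only after the $\tau$-action on $\det\underline{\mathcal M}(\phi)$ is invertible, which happens after $-\otimes_A K$ and passage to the perfection, because the $\tau$-action on the determinant is controlled by the leading coefficient of $\phi(a)$ for any nonzero $a \in A$, which is a unit in $\bF_{q^n}\otimes_A K$. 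Since perfection is a filtered colimit and hence commutes with tensor products and symmetric powers, the formula from the statement follows.

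For (2), I will first handle $\underline{\cF}_{k,l}$ and its \'etale pullback $\underline{\cF}_{k,l}(\mathcal K) = \pi_{\mathcal K}^*\underline{\cF}_{k,l}$. By Lemma~\ref{lem:flat}, flatness is a pointwise condition over $\bF_{q^n}$-points, and combining (1) with Proposition~\ref{prop:art-perf} applied at each artinian quotient of the good coefficient ring $A$, one sees that the perfection of $x^*\underline{\cF}_{k,l}$ is locally free of rank $k-1$. Hence $x^*\underline{\cF}_{k,l}$ is flat for every $x$, so $\underline{\cF}_{k,l}$ itself is flat, and pulling back along the \'etale cover $\pi_{\mathcal K}$ yields flatness of $\underline{\cF}_{k,l}(\mathcal K)$.

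The main obstacle is the cuspidal crystals $\underline{\mathcal S}_{k,l}$ and $\underline{\mathcal S}_{k,l}(\mathcal K)$, since taking cohomology does not automatically preserve flatness of individual crystals. I would attack these by a second application of Lemma~\ref{lem:flat}, now on $\Spec(A)$, combined with proper base change: for each $\bF_{q^n}$-point $y$ of $\Spec(A)$, $y^*\underline{\mathcal S}_{k,l} \cong R^1\Theta_{y,!}(y^*\underline{\cF}_{k,l})$ is the first compactly supported cohomology of a flat crystal on the fiber $\mathfrak M_{2,y}$, which is a smooth tame Deligne-Mumford stack of relative dimension one over~$\bF_{q^n}$. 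Descending along the \'etale cover by the level-$\mathcal K$ moduli scheme then reduces the question to cohomology of a locally free $\tau$-sheaf on a smooth affine curve, where representability of the cohomology by a locally free $\tau$-sheaf follows from the theory in \cite[Chapter 7]{BP}; this gives the desired flatness.
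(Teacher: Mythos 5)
Your argument is essentially correct and, for part~(1), follows the same route as the paper: identify $x^*\underline{\cF}\cong\underline{\mathcal M}(\phi)$ and observe that $\tau$ becomes invertible after $\otimes_A K$, so that the perfection (the direct summand on which $\tau$ is an isomorphism, Proposition~\ref{prop:art-perf}) is everything. One imprecision there: the reason $\tau$ is invertible on $\underline{\mathcal M}(\phi)\otimes_A K$ is not that the leading coefficient of $\phi(a)$ is a unit (it is a unit already over $\bF_{q^n}$, by the definition of a Drinfeld module); the relevant point is that $\coker(\tau)$ is supported at the characteristic of $\phi$ --- equivalently, that $\pi_\phi=\tau^n$ is an isogeny --- so it dies after tensoring with $K$. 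This is what the paper uses, and it also makes the negative tensor powers of $\det\underline{\mathcal M}(\phi)$ unproblematic without any appeal to perfection commuting with tensor products. For part~(2), your treatment of $\underline{\cF}_{k,l}$ is more roundabout than necessary (a crystal represented by a locally free $\tau$-sheaf is flat, so Lemma~\ref{lem:flat} applies directly; the detour through Proposition~\ref{prop:art-perf} at artinian quotients of $A$ is shaky since that proposition requires $B$ artinian and the perfection over $\bF_{q^n}\otimes A$ need not be coherent). For the cuspidal crystals your argument diverges from the paper's: you unwind flatness of $R^1\Theta_!$ by hand via proper base change and representability of cohomology of locally free $\tau$-sheaves on the fibre curves, whereas the paper simply invokes the statement (recorded after Definition~\ref{def:flat}, resting on \cite[Chapter 7]{BP}) that all the derived functors of Theorem~\ref{thm:functors}, in particular $Rf_!$, preserve flatness. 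Both rest on the same machinery; your version makes the mechanism explicit at the cost of length, while the paper's is a one-line citation.
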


\begin{proof}
    By definition of the universal Drinfeld module, we have $x^* \underline{\cF} \cong \underline{\mathcal M}(\phi)$ as $A$-crystals. By Example~\ref{ex:Mphi}, $\tau$ acts as an isomorphism on $\underline{\mathcal M}(\phi) \otimes_A K$, since $\pi_{\phi} = \tau^n$ is an isogeny and hence becomes invertible after tensoring with~$K$. Hence~$\tau$ also acts as an isomorphism on $\underline{\mathcal M}_{k,l}(\phi)_K$. Since $K$ is artinian, the perfection functor sends a $\tau$-sheaf to the direct summand on which~$\tau$ acts as an isomorphism, which yields~1. For~2, note that $\underline{\mathcal M}(\phi)$ is flat, as its underlying sheaf is locally free. Hence the result follows from Lemma~\ref{lem:flat} and the fact that flatness is preserved under the functors from Theorem~\ref{thm:functors}.
\end{proof}

For any rigid-analytic space~$\mathcal X$, there is a category of \emph{rigid $A$-crystals} which we denote by $\widetilde{\textsf{Crys}}(\mathcal X,A)$. For any scheme $X$ over $\Spec(K_\infty)$, one can define a rigidification functor
\[
(-)^\text{rig}: \textsf{Crys}(X,A) \longrightarrow \widetilde{\textsf{Crys}}(X^\text{rig},A)
\]
where $X^\text{rig}$ is a certain rigid analytic space associated to $X$. 
When applied to the crystal of cusp forms, one can show the following \cite[Cor.~10.13]{bockle}:

\begin{lemma}\label{lem:E-S}
    The crystal $i_{K_{\infty}}^*\underline{\mathcal S}_{k,l}(\mathcal K)^\text{rig}$ is represented by a rigid $\tau$-sheaf of the form
    \[
    \underline{\tilde{\mathds{1}}}_{K_\infty} \otimes_A P_{k,l}(\mathcal K),
    \]
    where $P_{k,l}(\mathcal K)$ is a projective finitely generated $A$-module and $\underline{\tilde{\mathds{1}}}_{K_\infty}$ denotes the trivial rigid $\tau$-sheaf on $\text{Spm}(K_\infty)$. \hfill \qedsymbol{}
\end{lemma}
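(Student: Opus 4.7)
The plan is to exploit Drinfeld's rigid analytic uniformization of $\mathfrak M_{2,\mathcal K}$ at the place $\infty$. First, I would show that $(\mathfrak M_{2,\mathcal K} \otimes K_\infty)^{\text{rig}}$ decomposes as a finite disjoint union of rigid-analytic quotients $\Gamma_i \backslash \Omega$, where $\Omega$ is the Drinfeld upper half-plane and the $\Gamma_i$ are arithmetic subgroups of $\operatorname{GL}_2(K)$ determined by $\mathcal K$. Pulling back to $\Omega$ replaces the moduli problem by the universal analytic Drinfeld module coming from the tautological lattice $\Lambda_z := A \oplus Az \subset \mathbb C_\infty$, $z \in \Omega$.

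Next, I would apply Drinfeld's equivalence between analytic Drinfeld modules over $\mathbb C_\infty$ and rank-$2$ projective $A$-lattices to produce a canonical isomorphism of rigid $\tau$-sheaves
\[
\underline{\mathcal M}(\phi_z)^{\text{rig}} \;\cong\; \underline{\tilde{\mathds{1}}}_{\mathbb C_\infty} \otimes_A \Lambda_z^\vee;
\]
the point is that on the analytic side $\tau$ acts trivially on the coefficient module and all nontriviality lives in the lattice. Propagating this in families yields an isomorphism $\underline{\mathcal F}_{k,l}(\mathcal K)^{\text{rig}}|_\Omega \cong \underline{\tilde{\mathds{1}}}_\Omega \otimes_A V_{k,l}$, where $V_{k,l} := (\det A^2)^{\otimes l-k+1} \otimes_A \Sym^{k-2} A^2$ carries the natural $\Gamma_i$-action coming from the descent data.

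Then I would compute $i_{K_\infty}^* R^1 \Theta_{\mathcal K,!} \underline{\mathcal F}_{k,l}(\mathcal K)$ by combining proper base change with the uniformization. After descent along $\Omega \to \Gamma_i \backslash \Omega$ (using a rigid variant of Leray for the tame quotient) and a cuspidal compactification, the compactly supported cohomology collapses to group cohomology of the $\Gamma_i$ with coefficients in $V_{k,l}$, yielding an identification with $\underline{\tilde{\mathds{1}}}_{K_\infty} \otimes_A P_{k,l}(\mathcal K)$, where $P_{k,l}(\mathcal K) := \bigoplus_i H^1_c(\Gamma_i, V_{k,l})$. This module is finitely generated, and its projectivity follows from the tameness hypothesis: stabilizers in each $\Gamma_i$ have order coprime to the residue characteristic of $A$, so torsion cohomology is avoided by the usual averaging argument.

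The hardest part will be tracking the $\tau$-structure carefully through these identifications so that it ends up acting as the \emph{trivial} Frobenius on the $\underline{\tilde{\mathds{1}}}_{K_\infty}$-factor. Over $\Omega$ one has two commuting structures, the $\Gamma_i$-action and $\tau$, and Drinfeld's analytic uniformization is $\Gamma_i$-equivariant but forces $\tau$ to act through the identity on the lattice factor; one must check that this compatibility survives $\Sym^{k-2}$, $\det^{l-k+1}$, the derived pushforward, and the cuspidal compactification. This bookkeeping is precisely the rigid Eichler-Shimura comparison of \cite{bockle}, and is where the real technical work lies.
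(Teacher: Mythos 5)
First, a structural point: the paper does not prove this lemma at all --- it is imported verbatim from B\"ockle (Cor.\ 10.13), and the terminal $\Box$ signals that the proof is external. Your proposal is therefore an attempt to reconstruct the cited argument, and your closing admission that the essential bookkeeping ``is precisely the rigid Eichler--Shimura comparison of \cite{bockle}'' makes it circular as a self-contained proof. The overall shape you describe (analytic uniformization of $\mathfrak M_{2,\mathcal K}$ at $\infty$, descent from $\Omega$ to $\Gamma_i\backslash\Omega$, identification of $P_{k,l}(\mathcal K)$ with invariant harmonic cochains / group cohomology, projectivity from tameness of the stabilizers) is indeed the shape of B\"ockle's argument.

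However, there is a concrete gap at your pivotal step. The asserted isomorphism $\underline{\mathcal M}(\phi_z)^{\text{rig}} \cong \underline{\tilde{\mathds{1}}}_{\mathbb C_\infty} \otimes_A \Lambda_z^\vee$ is false. On the right-hand side $\tau$ acts bijectively and the $\tau$-invariants form a projective $A$-module of rank $2$; on the left-hand side $\underline{\mathcal M}(\phi_z)$ is $\mathbb C_\infty\{\tau\}$ with $\tau$ acting by left multiplication, which is injective but not surjective --- its cokernel is the Lie algebra of $E$, a nonzero skyscraper at the (generic) characteristic point --- and it has no nonzero $\tau$-invariants. The statement that ``$\tau$ acts trivially on the coefficient module and all nontriviality lives in the lattice'' is exactly Anderson's rigid analytic triviality, and that holds only after completing in the \emph{coefficient} direction (over a Tate algebra in the $A$-variable), not in $\widetilde{\textsf{Crys}}(\operatorname{Spm}(\mathbb C_\infty),A)$ as defined here. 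The correct input is a short exact sequence of analytic $\tau$-sheaves relating $\underline{\mathcal M}(\phi_z)^{\text{rig}}$, the motive of $\mathbb G_a$ with its scalar $A$-action, and a trivial $\tau$-sheaf built from $\Lambda_z$; the trivial $\tau$-structure on $i_{K_\infty}^*\underline{\mathcal S}_{k,l}(\mathcal K)^{\text{rig}}$ only emerges from the connecting map after applying $R^1\Theta_{\mathcal K,!}$, because the other terms contribute nothing in degree $1$. Since everything downstream (the identification of the fibers of $\underline{\mathcal F}_{k,l}(\mathcal K)^{\text{rig}}$ with $\underline{\tilde{\mathds{1}}}\otimes_A V_{k,l}$ and the reduction to $H^1_c(\Gamma_i,V_{k,l})$) rests on this step, the proof as written does not go through.
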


For a scheme~$X$, the trivial $\tau$-sheaf $\underline{\mathds{1}}_X$ on~$X$ over~$A$ is given by the pair $(\mathcal O_{X \otimes A},\tau)$, where
\[
\tau \colon (\sigma_X \times \text{id})^*\mathcal O_{X \otimes A} \longrightarrow \mathcal O_{X \otimes A}
\]
is given by the adjoint of the map $x \otimes a \mapsto x^q \otimes a$. The construction for rigid $\tau$-sheaves is entirely similar. In particular, the functor taking $\tau$-invariants
\begin{align*}
(-)^\tau \colon \widetilde{\textsf{Crys}}(\mathcal X,A) &\longrightarrow A\textsf{-Mod}\\
\underline{\cF} &\longmapsto H^0(\mathcal X \times \Spec(A),\cF)^\tau,
\end{align*}
which is well-defined on crystals, sends $\underline{\tilde{\mathds{1}}}_{\mathcal X} \otimes_A P \mapsto P$.

We can now state the Eichler--Shimura isomorphism for Drinfeld cusp forms, which is proven in \cite[Thm 10.3]{bockle} in the case $l = k-1$ and which is readily generalised to arbitrary types $l$:

\begin{theorem}[Eichler--Shimura isomorphism]\label{thm:E-S}
Let $\mathcal K \subset \operatorname{GL}_2(\hat{A})$ be an admissible subgroup. Then there is a Hecke-equivariant isomorphism 
\begin{equation}\label{eq:E-S-iso}
\mathbb C_{\infty} \otimes_A \left( i_{K_{\infty}}^*\underline{\mathcal S}_{k,l}(\mathcal K)^\text{rig} \right)^\tau \cong \S_{k,l}(\mathcal K)^\vee,
\end{equation}
called the \emph{Eichler--Shimura isomorphism for Drinfeld cusp forms}. \hfill $\Box$
\end{theorem}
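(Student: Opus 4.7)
The plan is to follow the proof of \cite[Thm 10.3]{bockle} for the case $l = k-1$ and to track the dependence on $l$ throughout. Böckle's argument identifies the rigid $\tau$-sheaf $i_{K_\infty}^* \underline{\mathcal S}_{k,k-1}(\mathcal K)^{\text{rig}}$ as a trivial rigid $\tau$-sheaf tensored with a finitely generated projective $A$-module $P_{k,k-1}(\mathcal K)$ (cf.\ Lemma~\ref{lem:E-S}), and then constructs a Hecke-equivariant pairing between $\mathbb C_\infty \otimes_A P_{k,k-1}(\mathcal K)$ and $\mathcal S_{k,k-1}(\mathcal K)$ by means of residues on the Drinfeld upper half-plane.

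The general type case reduces to inserting the twist $(\det \underline{\mathcal F})^{\otimes(l-k+1)}$ from the definition of $\underline{\mathcal F}_{k,l}$. Under the analytic uniformization of $\mathfrak M_{2,\mathcal K}$, this determinant twist corresponds, on the rigid-analytic side, to a shift by the character $\det(\gamma)^{-(l-k+1)}$ in the transformation law; on the modular forms side, this is precisely the character by which type-$l$ forms transform relative to type-$(k-1)$ forms, cf.\ Definition~\ref{def:dmf}. Inserting the twist on both sides and applying Böckle's construction, with the trivialization of the determinant provided by the universal level-$\mathcal K$ data and the Carlitz-type $\tau$-structure on $\det \underline{\mathcal M}(\phi)$ identified in Example~\ref{ex:Mphi}, produces the desired Hecke-equivariant isomorphism for arbitrary $l$.

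The main obstacle, and the reason for not arguing more abstractly, is to ensure that the character twist on the crystal side matches the Hecke twist on the automorphic side under the adelic normalization conventions recalled in Remark~\ref{rmk:hecke}. Concretely, one must verify that the factor of $P^{l-k}$ relating the adelic Hecke operator $\T_{\mathfrak p}$ to its classical counterpart $\T_{\mathfrak p}^{\mathbb F_q[T]}$ is compatible with the action of $\T_{\mathfrak p}$ on $(\det \underline{\mathcal F})^{\otimes(l-k+1)}$ via the Hecke correspondence formalism of \cite{bockle}. Once this bookkeeping is in place, the result follows directly from Böckle's theorem at $l = k-1$.
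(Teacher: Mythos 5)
Your proposal matches the paper's treatment: the theorem is stated with a pointer to \cite[Thm 10.3]{bockle} for the case $l = k-1$, and the extension to arbitrary type is exactly the determinant twist $(\det\underline{\mathcal F})^{\otimes(l-k+1)}$ you describe, matching the extra factor $\det(\gamma)^{-(l-k+1)}$ in the transformation law on the automorphic side; the paper offers no further argument beyond asserting this generalisation is easy. One small caution: your compatibility check phrased via the factor $P^{l-k}$ only makes sense for $A = \mathbb F_q[T]$ (where $\mathfrak p$ has a monic generator), whereas the theorem is for general $A$ --- the correct general form of that bookkeeping is the character-twist comparison of \cite[Remark 6.12]{bockle} recalled in Remark~\ref{rmk:hecke}.
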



There is also an Eichler--Shimura relation. Classically, this says that for a prime $p \nmid N$, the Hecke operator at~$p$ is the sum of the $p$-Frobenius and the Verschiebung as correspondences on~$X_0(N)$. For Drinfeld modules, the Hecke operator at~$\mathfrak p$ is simply the $\mathfrak p$-Frobenius \cite[Theorem 13.10]{bockle}:

\begin{theorem}[Eichler--Shimura relation]\label{thm:E-S_rel}
    Suppose $\mathcal K$ has minimal conductor~$\mathfrak n$. Let~$\mathfrak p$ be a prime not dividing~$\mathfrak n$. Then we have an equality
    \begin{equation}\label{eq:E-S-rel}
    i_{\mathfrak p}^*\T_{\mathfrak p} = \tau^{\deg(\mathfrak p)}
    \end{equation}
    as endomorphisms of $i_{\mathfrak p}^*\underline{\mathcal S}_{k,l}(\mathcal K)$. \hfill $\Box$
\end{theorem}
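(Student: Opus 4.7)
The plan is to unwind the definition of $\T_{\mathfrak p}$ as the Hecke correspondence attached to $\mathfrak p$-isogenies, base-change along $i_{\mathfrak p}$, and check directly that only the Frobenius part of the correspondence survives on the crystal. First, I would fix $\mathcal K' := \mathcal K \cap \mathcal K_0(\mathfrak p)$ and consider the correspondence
\[
\mathfrak M_{2,\mathcal K} \xleftarrow{\pi_1} \mathfrak M_{2,\mathcal K'} \xrightarrow{\pi_2} \mathfrak M_{2,\mathcal K},
\]
where $\mathfrak M_{2,\mathcal K'}$ parametrizes Drinfeld modules with level-$\mathcal K$ structure equipped with a cyclic $A$-submodule scheme $C \subset \phi[\mathfrak p]$ of order $|\bF_{\mathfrak p}|$, with $\pi_1$ forgetting $C$ and $\pi_2$ sending $(\phi,C) \mapsto \phi/C$. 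The universal $\mathfrak p$-isogeny $\lambda\: \pi_1^*\phi \to \pi_2^*\phi$ induces a morphism $\lambda^*\: \pi_2^*\underline{\mathcal F}_{k,l}(\mathcal K) \to \pi_1^*\underline{\mathcal F}_{k,l}(\mathcal K)$ of associated $\tau$-sheaves, and $\T_{\mathfrak p}$ is by definition the push-pull endomorphism of $\underline{\mathcal S}_{k,l}(\mathcal K)$ obtained by applying $R^1\Theta_{\mathcal K,!}$ to $\lambda^*$, twisted by the appropriate power of $\mathfrak p$ (cf.\ Remark \ref{rmk:hecke}).

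Second, I would base-change along $i_{\mathfrak p}$ and analyze the characteristic-$\mathfrak p$ fibre of the correspondence. Over the $\mathfrak p$-characteristic locus of $\mathfrak M_{2,\mathcal K}$, every Drinfeld module $\phi$ carries the canonical subgroup scheme $C_0 := \ker(\pi_\phi) \subset \phi[\mathfrak p]$ of rank $|\bF_{\mathfrak p}|$, namely the kernel of the relative $\mathfrak p$-Frobenius isogeny $\pi_\phi\: \phi \to \phi^{(\mathfrak p)}$. This gives a canonical section of $\pi_1$ over $i_{\mathfrak p}^*\mathfrak M_{2,\mathcal K}$, with \'etale complement appearing only on the ordinary locus. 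On the connected component $\{C = C_0\}$ the universal isogeny $\lambda$ is $\pi_\phi$, and by Example \ref{ex:Mphi} it induces on $\underline{\mathcal M}(\phi)$, hence on $\underline{\mathcal F}_{k,l}$, precisely multiplication by $\tau^{\deg(\mathfrak p)}$. The \'etale contributions, present only in the ordinary case, compose with $\lambda^*$ to yield a factor of $\mathfrak p$ acting on the crystal, which vanishes after base change along $i_{\mathfrak p}$ since $A$ then acts through $A \to \bF_{\mathfrak p}$. Combining both components yields the asserted equality $i_{\mathfrak p}^*\T_{\mathfrak p} = \tau^{\deg(\mathfrak p)}$.

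The hardest step will be controlling the \'etale contribution and ensuring that it truly vanishes after restriction to characteristic $\mathfrak p$; this is the function-field analogue of the Verschiebung term in the classical Eichler--Shimura relation $T_p = F + V$. The reason it dies in the crystal category, rather than merely contributing a congruence relation modulo $\mathfrak p$, is that the $A$-linear structure of the crystal already encodes the prime $\mathfrak p$, and the relevant operator factors through multiplication by $\mathfrak p$, which is zero on $i_{\mathfrak p}^*\underline{\mathcal S}_{k,l}(\mathcal K)$. Once this vanishing is justified, the rest of the proof amounts to bookkeeping with the pre-6-functor formalism of Theorem \ref{thm:functors}, proper base change, and the explicit description of Frobenius from Example \ref{ex:Mphi}.
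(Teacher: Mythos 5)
The paper does not actually prove this statement: it is quoted from B\"ockle \cite[Thm 13.10]{bockle}, which is why it is stated with a $\Box$ and no proof. So you are attempting to reprove an imported black box. The overall shape of your argument --- decompose the mod-$\mathfrak p$ Hecke correspondence into a Frobenius branch (kernel of $\pi_\phi$) and a ``Verschiebung'' branch with \'etale kernel over the ordinary locus, identify the first with $\tau^{\deg(\mathfrak p)}$ via Example~\ref{ex:Mphi}, and show the second dies --- is indeed the right skeleton.

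However, the step you yourself single out as the hardest is wrong as justified. You claim the \'etale branch vanishes because it ``factors through multiplication by $\mathfrak p$, which is zero on $i_{\mathfrak p}^*\underline{\mathcal S}_{k,l}(\mathcal K)$ since $A$ then acts through $A \to \bF_{\mathfrak p}$.'' This conflates the two distinct roles $A$ plays. It enters once as the \emph{base}, via the characteristic morphism $\theta$ of Definition~\ref{def:char_morphism} --- and only $\theta(\mathfrak p)$ vanishes in characteristic~$\mathfrak p$ --- and once as the \emph{coefficient ring} of the crystal. The object $i_{\mathfrak p}^*\underline{\mathcal S}_{k,l}(\mathcal K)$ is an $A$-crystal on $\Spec(\bF_{\mathfrak p})$, i.e.\ a sheaf of $\bF_{\mathfrak p}\otimes_{\bF_q}A$-modules, and $a\in\mathfrak p$ acts as $1\otimes a$, which is injective on the locally free fibers of the motive; indeed Corollary~\ref{cor:BES} and Theorem~\ref{thm:heckesum} compute nonzero traces on $i_{\mathfrak p}^*\underline{\mathcal S}_{k,l}\otimes_A K$, which would be the zero module if $\mathfrak p$ acted by zero. (The classical heuristic $FV=p$ translates here into: the composite of the two branches is the \emph{coefficient} action of $\mathfrak p$ on $\underline{\mathcal M}(\phi)$, per Example~\ref{ex:Mphi}, and that action becomes invertible, not zero, after $\otimes_A K$.) So your proposed mechanism cannot be what kills the Verschiebung term. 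Whatever makes it vanish must come from the crystal formalism itself --- the branch is represented by a morphism of $\tau$-sheaves that becomes zero only after localizing at nil-isomorphisms in the sense of Definition~\ref{def:nil-iso} --- and your sketch never invokes nilpotence at all. As written there is a genuine gap at the central step; either supply that argument in detail or cite \cite[Thm 13.10]{bockle} as the paper does.
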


The above results extend to the crystals $\underline{\mathcal S}_{k,l}$ of level~1. In particular, we obtain the following:

\begin{corollary}\label{cor:BES}
    Denote by $\underline{\hat{\mathcal S}}^{K,\mathfrak p}_{k,l}$ the perfection of $i_{\mathfrak p}^*\underline{\mathcal S}_{k,l} \otimes_A K$. Then we have
    \[
    \Tr_{\bF_{\mathfrak p} \otimes K} \left( \tau^{\deg (\mathfrak p)n} \ \bigg{|} \ \hat{\mathcal S}^{K,\mathfrak p}_{k,l} \right) = \Tr_{\mathbb C_\infty} \left( \T_{\mathfrak p}^n \hspace{0.3em} \bigg{|} \hspace{0.1em} \S_{k,l} \right).
    \]
\end{corollary}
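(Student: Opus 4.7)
The strategy is to combine the two Eichler-Shimura statements (Theorems~\ref{thm:E-S} and~\ref{thm:E-S_rel}) after first extending them from level~$\mathcal K$ to level~$1$. With the extensions in hand, the corollary reduces to comparing the Hecke trace computed via the rigid fibre at~$\infty$ with the trace of $\tau^{\deg(\mathfrak p)n}$ computed at the fibre over~$\mathfrak p$.

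First I would extend Theorems~\ref{thm:E-S} and~\ref{thm:E-S_rel} to level~$1$. Choose a normal admissible subgroup $\mathcal K \trianglelefteq \operatorname{GL}_2(\hat{A})$ with minimal conductor coprime to~$\mathfrak p$, and set $G := \operatorname{GL}_2(\hat{A})/\mathcal K$. Then $\pi_{\mathcal K}\colon \mathfrak M_{2,\mathcal K} \to \mathfrak M_2$ is a $G$-torsor of tame Deligne-Mumford stacks with $\pi_{\mathcal K}^*\underline{\mathcal S}_{k,l} \cong \underline{\mathcal S}_{k,l}(\mathcal K)$ $G$-equivariantly, and since $\#G$ is invertible mod~$p$, averaging exhibits $\underline{\mathcal S}_{k,l}$ as the $G$-invariant direct summand of $\pi_{\mathcal K,*}\underline{\mathcal S}_{k,l}(\mathcal K)$; the same holds after rigidification at~$\infty$ and after pullback along $i_{\mathfrak p}$. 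Taking $G$-invariants in \eqref{eq:E-S-iso} then gives the level-$1$ isomorphism $\mathbb C_\infty \otimes_A P_{k,l} \cong \S_{k,l}^\vee$, where $P_{k,l} := P_{k,l}(\mathcal K)^G$ is projective over~$A$; taking $G$-invariants in \eqref{eq:E-S-rel} gives $i_{\mathfrak p}^*\T_{\mathfrak p} = \tau^{\deg \mathfrak p}$ on $i_{\mathfrak p}^*\underline{\mathcal S}_{k,l}$.

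With these in hand, the level-$1$ Eichler-Shimura relation, after tensoring with~$K$ and perfecting, gives $\tau^{\deg(\mathfrak p)n} = \T_{\mathfrak p}^n$ as endomorphisms of $\hat{\mathcal S}^{K,\mathfrak p}_{k,l}$, so the left-hand side of the corollary becomes $\Tr_{\bF_{\mathfrak p} \otimes K}(\T_{\mathfrak p}^n \mid \hat{\mathcal S}^{K,\mathfrak p}_{k,l})$. The level-$1$ Eichler-Shimura isomorphism rewrites the right-hand side as $\Tr_A(\T_{\mathfrak p}^n \mid P_{k,l})$, viewed inside~$\mathbb C_\infty$. I am thus reduced to identifying these two Hecke traces. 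For this, I would argue that $\underline{\mathcal S}_{k,l} \otimes_A K$ is a flat $K$-crystal on $\Spec(A)$ whose perfection is globally of the form $\underline{\mathds{1}}_{\Spec(A)} \otimes_K (K \otimes_A P_{k,l})$, by spreading the isomorphism of Lemma~\ref{lem:E-S} out from $\Spm(K_\infty)$ to all of $\Spec(A)$. Pulling this global trivialisation back along $i_{\mathfrak p}$ then produces $\hat{\mathcal S}^{K,\mathfrak p}_{k,l} \cong \bF_{\mathfrak p} \otimes_A (K \otimes_A P_{k,l})$ with Hecke action induced from that on $P_{k,l}$, and the equality of traces follows by compatibility of traces with base change.

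The principal obstacle will be the spreading-out step: showing that the perfection of $\underline{\mathcal S}_{k,l} \otimes_A K$ is \emph{globally} (not merely pointwise) a trivial $\tau$-sheaf tensored with $K \otimes_A P_{k,l}$, in a manner compatible with the Hecke action. I expect this to follow by combining Lemma~\ref{lem:E-S} with the observation that the characteristic polynomial of an $\bF_q$-linear endomorphism of a locally free sheaf is a global section of the structure sheaf over a connected affine base, which pins down the Hecke trace everywhere from its value at the generic fibre.
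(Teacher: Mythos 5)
Your first three steps coincide with the paper's proof: the paper also chooses a normal $\mathcal K = \mathcal K(\mathfrak n)$ with $\mathfrak n$ prime to $\mathfrak p$, identifies $\mathfrak M_2$ with $[\mathfrak M_{2,\mathcal K}/G]$ for $G \cong \operatorname{GL}_2(A/\mathfrak n)$, passes to $G$-invariants (tameness making this an exact direct summand) to get the level-$1$ versions of Theorems~\ref{thm:E-S} and~\ref{thm:E-S_rel}, and then uses \eqref{eq:E-S-rel} to turn the left-hand side into a Hecke trace at $\mathfrak p$ and \eqref{eq:E-S-iso} to turn the right-hand side into $\Tr_A(\T_{\mathfrak p}^n \mid P_{k,l})$. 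The gap is in your final step. The claim that $\operatorname{perf}(\underline{\mathcal S}_{k,l}\otimes_A K)$ is \emph{globally} isomorphic to $\underline{\mathds{1}}_{\Spec(A)}\otimes_K (K\otimes_A P_{k,l})$ with $\T_{\mathfrak p}$ acting through $P_{k,l}$ is false, and demonstrably so: on such a trivial $\tau$-sheaf, $\tau^{\deg(\mathfrak p)}$ acts as the identity on the fibre over $\mathfrak p$ (Frobenius to the power $\deg(\mathfrak p)$ is trivial on $\bF_{\mathfrak p}$), so the Eichler--Shimura relation \eqref{eq:E-S-rel} would force $\T_{\mathfrak p} = \operatorname{id}$ on $P_{k,l}\otimes_A K$ for every $\mathfrak p$, i.e.\ all Hecke eigenvalues equal to $1$. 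The trivialisation in Lemma~\ref{lem:E-S} is an intrinsically rigid-analytic statement at the completed place $\infty$ (this is why the rigidification functor and the completed tensor product appear at all); it is the analogue of the Betti trivialisation of the local system underlying a variation of Hodge structure, and it does not algebraize or spread out over $\Spec(A)$.

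Your fallback argument does not repair this. The trace of the globally defined, $\mathcal O_{\Spec(A)}\otimes K$-linear endomorphism $\T_{\mathfrak p}^n$ of a locally free representative is indeed a global section, i.e.\ an element of $A\otimes_{\bF_q} K$; but its specialisations at the closed point $\mathfrak p$ and at the generic point need not agree unless that element happens to lie in $\bF_q\otimes K = K$, which is exactly what has to be proved (and the trace on the perfection of a fibre is in any case not simply the specialisation of the global trace). What is actually needed to pass from the fibre at $\mathfrak p$ to the fibre at $\infty$ is the ``lisse local system'' behaviour of the flat $A$-crystal $\underline{\mathcal S}_{k,l}$ over the connected base $\Spec(A)$: for $A$ reduced, the perfected stalks at all points are identified compatibly with any global endomorphism, so the trace of $\T_{\mathfrak p}^n$ on the perfected fibre is constant over $\Spec(A)$ and may be read off at $K_\infty$, where Lemma~\ref{lem:E-S} and Theorem~\ref{thm:E-S} identify it with $\Tr_{\mathbb C_\infty}(\T_{\mathfrak p}^n\mid \S_{k,l})$. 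This is the content of the paper's (admittedly terse) final sentence that ``the Hecke action on $\S_{k,l}$ can be computed on the underlying sheaf of any locally free representative of $i_{\mathfrak p}^*\underline{\mathcal S}_{k,l}$''; some such global constancy input must be supplied, and a global trivialisation of the crystal is not an available substitute.
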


\begin{proof}
    Consider $\mathcal K = \mathcal K(\mathfrak n)$ for some proper non-zero ideal $\mathfrak n$ not contained in $\mathfrak p$. Then $\mathcal K(\mathfrak n) \subset \operatorname{GL}_2(\hat{A})$ is a normal subgroup with quotient $G \cong \operatorname{GL}_2(A/\mathfrak n A)$, and as a result $\mathfrak M_{2,\mathcal K}$ is a Galois cover of $\mathfrak M_{2}$ over $\Spec(A[\mathfrak n^{-1}])$ with Galois group~$G$. In other words, we can identify $\mathfrak M_2$ with the stack quotient $[\mathfrak M_{2,\mathcal K}/G]$. Via base change, this statement holds also over both $K_\infty$ and~$\bF_{\mathfrak p}$.

    Since a $G$-action on a crystal induces by functoriality a $G$-action on its rigidification, we define $i_{K_\infty}^*\underline{\mathcal S}_{k,l}^\text{rig} := (i_{K_\infty}^*\underline{\mathcal S}_{k,l}(\mathcal K)^\text{rig})^G$. Then the Eichler--Shimura isomorphism~\eqref{eq:E-S-iso} and Lemma~\ref{lem:E-S} imply
    \[
    i_{K_\infty}^*\underline{\mathcal S}_{k,l}^\text{rig} \cong (\underline{\mathds{1}}_{K_\infty} \otimes_A \S_{k,l}(\mathcal K)^\vee)^G \cong \underline{\mathds{1}}_{K_\infty} \otimes_A \S_{k,l}^\vee.
    \]
    Next, $i_{\mathfrak p}^*\T_{\mathfrak p}^n = \tau^{\deg (\mathfrak p)n}$ follows from the Eichler--Shimura relation~\eqref{eq:E-S-rel} and Corollary~\ref{cor:crys_pullback}, since exact and conservative functors are faithful. Combining the above and noting that $A$ is reduced, the Hecke action on~$\S_{k,l}$ can be computed on the underlying sheaf of any locally free representative of the crystal~$i_{\mathfrak p}^*\underline{\mathcal S}_{k,l}$. This may also be done after tensoring with $K$, which gives the desired expression.
\end{proof}

\subsection{A trace formula for Hecke operators}

Let $\pi \in \bar{K}$ be algebraic over~$K$ of degree $\leq 2$, and denote its Galois conjugate by~$\bar{\pi}$. Then we set for $m \geq 0$,
\[
\Tr_m(\pi) := \sum_{i = 0}^m \pi^i \bar{\pi}^{m-i}.
\]

Combining Corollary~\ref{cor:BES} with the Lefschetz trace formula, we obtain a trace formula for Hecke operators on Drinfeld cusp forms of level~1.

\begin{theorem}\label{thm:heckesum}
    Let $\mathfrak p \trianglelefteq A$ be a maximal ideal with residue field $\bF_{\mathfrak p}$. Fix a weight $k \geq 2$ and a type $l \in \mathbb Z$. Then for every $n \geq 1$, we have
    \begin{equation}\label{eq:hecketrace}
    \Tr_{\mathbb C_\infty}(\T^n_{\mathfrak p} \hspace{0.2em} | \hspace{0.1em} \S_{k,l}) = \sum_{[(E,\phi)]/\bF_{{\mathfrak p}^n}} \Tr_{k-2}(\pi_{\phi}) \cdot (\pi_\phi \bar{\pi}_\phi)^{l-k+1},
    \end{equation}
    where the sum is taken over the set of isomorphism classes of Drinfeld modules over $\bF_{\mathfrak{p}^n}$, and $\pi_{\phi} \in \bar{K}$ denotes the Frobenius endomorphism of $(E,\phi)$.
\end{theorem}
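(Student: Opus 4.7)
The strategy is to apply the Lefschetz trace formula (Theorem~\ref{thm:trace_formula}) to the morphism $\Theta_{\mathfrak p}\colon\mathfrak M_{2,\mathfrak p}\to\Spec(\bF_{\mathfrak p})$ obtained from $\Theta$ by base change along $i_{\mathfrak p}$, with the flat $K$-crystal $i_{\mathfrak p}^*\underline{\cF}_{k,l}\otimes_A K$. Since $K$ is a field and therefore reduced, the formula is an honest equality of $l$-series. By proper base change for $R\Theta_!$ (Theorem~\ref{thm:functors}), the compactly-supported pushforward on the target commutes with $i_{\mathfrak p}^*$, so the target side of the trace formula will be connected to the Hecke trace via Corollary~\ref{cor:BES}, while the source side unfolds into the sum over Drinfeld modules appearing on the right-hand side of the theorem.

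On the source, $\mathfrak M_{2,\mathfrak p}$ has no $\bF_{q^m}$-points unless $d\mid m$ with $d:=\deg(\mathfrak p)$; writing $m=dn$, these points are Drinfeld modules $(E,\phi)$ over $\bF_{\mathfrak p^n}$. For such a point $x$, Lemma~\ref{lem:universal-crystals}(1) identifies the perfection of $x^*(\underline{\cF}_{k,l}\otimes_A K)$ with $\underline{\mathcal M}_{k,l}(\phi)_K=(\det\underline{\mathcal M}(\phi))^{\otimes l-k+1}\otimes\Sym^{k-2}\underline{\mathcal M}(\phi)\otimes_A K$, and Example~\ref{ex:Mphi} shows that $\tau^{dn}$ acts there as the Frobenius endomorphism $\pi_\phi$. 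Viewed as a $K$-linear endomorphism of the rank-2 module $\underline{\mathcal M}(\phi)\otimes K$, $\pi_\phi$ has characteristic polynomial $X^2-(\pi_\phi+\bar\pi_\phi)X+\pi_\phi\bar\pi_\phi$; diagonalising over a splitting field gives $\Tr(\pi_\phi\mid\Sym^{k-2}\underline{\mathcal M}(\phi)_K)=\sum_{i=0}^{k-2}\pi_\phi^i\bar\pi_\phi^{k-2-i}=\Tr_{k-2}(\pi_\phi)$ and $\Tr(\pi_\phi\mid(\det\underline{\mathcal M}(\phi))^{\otimes l-k+1}_K)=(\pi_\phi\bar\pi_\phi)^{l-k+1}$. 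The coefficient of $t^{dn}$ in the source $l$-series is therefore the stacky sum
\[
\sum_{[(E,\phi)]/\bF_{\mathfrak p^n}}\frac{\Tr_{k-2}(\pi_\phi)(\pi_\phi\bar\pi_\phi)^{l-k+1}}{\#\Aut(\phi)},
\]
matching the right-hand side of the theorem, with the sum over isomorphism classes in the statement understood in the stacky sense.

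On the target, $\Spec(\bF_{\mathfrak p})$ is a single point with trivial automorphisms, so the $l$-series of the complex $R\Theta_{\mathfrak p,!}(i_{\mathfrak p}^*\underline{\cF}_{k,l}\otimes K)$ decomposes as the alternating sum $\sum_i(-1)^i\sum_n\Tr_{\bF_{\mathfrak p}\otimes K}\bigl(\tau^{dn}\mid R^i\Theta_{\mathfrak p,!}(i_{\mathfrak p}^*\underline{\cF}_{k,l}\otimes K)^{\widehat{\phantom{n}}}\bigr)t^{dn}$. The $i=1$ term is exactly $\sum_n\Tr_{\bF_{\mathfrak p}\otimes K}(\tau^{dn}\mid\hat{\mathcal S}_{k,l}^{K,\mathfrak p})t^{dn}$ (by proper base change applied to $\underline{\mathcal S}_{k,l}=R^1\Theta_!\underline{\cF}_{k,l}$), and Corollary~\ref{cor:BES} identifies this with $\sum_n\Tr_{\mathbb C_\infty}(\T_{\mathfrak p}^n\mid\S_{k,l})t^{dn}$, i.e.\ the left-hand side of the theorem. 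The main obstacle I anticipate is verifying that the extreme cohomologies $R^0\Theta_{\mathfrak p,!}$ and $R^2\Theta_{\mathfrak p,!}$ contribute trivially after tensoring with $K$: this is the crystalline analogue of the classical vanishing of (co)invariants of $\Sym^{k-2}$ of the universal representation for $k\geq 2$, and it should follow from the Eichler–Shimura framework recalled in Lemma~\ref{lem:E-S} (which identifies only the degree-1 cohomology with a cusp-form piece) once extended to the full derived pushforward. Granting that vanishing, equating coefficients of $t^{dn}$ between the two sides of the trace formula yields exactly the identity of Theorem~\ref{thm:heckesum}.
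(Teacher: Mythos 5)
Your overall strategy is the same as the paper's: apply Theorem~\ref{thm:trace_formula} to the fibre $\mathfrak M_{2,\mathfrak p}\to\Spec(\bF_{\mathfrak p})$ with the crystal $\underline{\cF}_{k,l}$, unfold the source side pointwise via Lemma~\ref{lem:universal-crystals} and Example~\ref{ex:Mphi}, and identify the target side with Hecke traces via Corollary~\ref{cor:BES}. However, there is a genuine gap in how you close the computation, visible as two unaccounted-for discrepancies that must cancel. On the source side you obtain the stacky sum $\sum_{[(E,\phi)]}\#\Aut(\phi)^{-1}\,\Tr_{k-2}(\pi_\phi)(\pi_\phi\bar\pi_\phi)^{l-k+1}$ and declare that it matches the right-hand side of the theorem ``understood in the stacky sense'' --- but the statement has no automorphism factors, so this is not the identity you are asked to prove. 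On the target side, the cusp-form crystal sits in cohomological degree $1$, i.e.\ $Rs_{\mathfrak p,!}j_{\mathfrak p}^*\underline{\cF}_{k,l}\cong i_{\mathfrak p}^*\underline{\mathcal S}_{k,l}[-1]$, so by Definition~\ref{def:l-series} the $l$-series of the pushforward carries a factor $(-1)^1$; you drop this sign when you identify the target side with $+\sum_n\Tr_{\mathbb C_\infty}(\T_{\mathfrak p}^n\mid\S_{k,l})t^{dn}$. The missing ingredient is Lemma~\ref{lem:aut_order}: $\#\Aut(\phi)\equiv-1\pmod p$ for every Drinfeld module over a finite field, so in the coefficient field $K$ (of characteristic $p$) the stacky weight $\#\Aut(\phi)^{-1}$ is literally $-1$, and this $-1$ cancels the $(-1)^1$ from the shift. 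Without that observation your two sides read $-\Tr_{\mathbb C_\infty}(\T_{\mathfrak p}^n\mid\S_{k,l})$ and $\sum\#\Aut(\phi)^{-1}(\cdots)$, and the passage to the clean identity~\eqref{eq:hecketrace} is unjustified.

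A second, smaller point: you correctly flag the vanishing of $R^0\Theta_!\underline{\cF}_{k,l}$ and $R^2\Theta_!\underline{\cF}_{k,l}$ as the remaining obstacle but leave it open. The paper disposes of it by citing \cite[Theorem 8.4.2(a)]{BP} together with \'etale descent (via Corollary~\ref{cor:crys_pullback}), which gives $R^i\Theta_!\underline{\cF}_{k,l}=0$ for $i\neq1$; this is an external input, not something that follows from Lemma~\ref{lem:E-S}, which only describes the degree-$1$ piece over $K_\infty$.
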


\begin{proof}
    Let $j_{\mathfrak p} \: \mathfrak M_{2,\mathfrak p} \to \mathfrak M_2$ be the fiber of the moduli space of Drinfeld modules at~$\mathfrak p$. Consider the $l$-series of the crystal $j_{\mathfrak p}^*\underline{\cF}_{k,l}$. 
    By Lemma~\ref{lem:universal-crystals}, we have
    \[
    l(\mathfrak M_{2,\mathfrak p},j_{\mathfrak p}^*\underline{\cF}_{k,l},t) = \sum_{n \geq 1} \sum_{[(E,\phi)]/\bF_{\mathfrak p^n}} \frac{\Tr_{\bF_{\mathfrak{p}^n} \otimes K}\left(\tau^{\deg(\mathfrak p)n} \ \bigg{|} \ \underline{\mathcal M}_{k,l}(\phi)_K \right)}{\# \Aut(\phi)} t^{\deg(\mathfrak p)n}.
    \]
    By Lemma~\ref{lem:aut_order}, we have $\# \Aut(\phi) = -1$ for all~$\phi$. Moreover, the endomorphism $\tau^{\deg(\mathfrak p)n}$ of the $\tau$-sheaf associated to~$\phi$ may be identified with the Frobenius endomorphism $\pi_{\phi}$ of~$\phi$.
    If the characteristic polynomial of~$\pi_{\phi}$ has roots~$\pi_{\phi}$ and~$\bar{\pi}_\phi$, then by definition of $\underline{\mathcal M}_{k,l}(\phi)_K$ we obtain
    \[
        l(\mathfrak M_{2,\mathfrak p},j_{\mathfrak p}^*\underline{\cF}_{k,l},t) = \sum_{n \geq 1} \sum_{[(E,\phi)]/\bF_{\mathfrak p^n}} - (\pi_\phi \bar{\pi}_\phi)^{l-k+1} \Tr_{k-2}(\pi_{\phi}) t^{\deg(\mathfrak p)n}.
    \]
    On the other hand, since $\mathfrak M_{2,\mathfrak p}$ is compactifiable \cite[Theorem~F]{rydh_tame_comp}, we may apply the trace formula to the crystal $j_{\mathfrak p}^*\underline{\cF}_{k,l}$ under the structure map $s_{\mathfrak p} \: \mathfrak M_{2,\mathfrak p} \to \Spec(\bF_{\mathfrak p})$. We have $R^i\Theta_!\underline{\cF}_{k,l} = 0$ for $i \neq 1$ by \cite[Theorem 8.4.2(a)]{BP} and \'etale descent. Combined with proper base change, this gives $Rs_{\mathfrak p,!}j_{\mathfrak p}^*\underline{\cF}_{k,l} \cong i_{\mathfrak p}^*\underline{\mathcal S}_{k,l}[-1]$. Thus the trace formula states that
    \[
    l(\mathfrak M_{2,\mathfrak p},j_{\mathfrak p}^*\underline{\cF}_{k,l},t) = l(\Spec(\bF_{\mathfrak p}),i_{\mathfrak p}^*\underline{\mathcal S}_{k,l}[-1],t),
    \]
    with equality holding on the nose since $A$ is reduced. Applying Corollary~\ref{cor:BES} and comparing coefficients yields the desired equation~\eqref{eq:hecketrace}.
\end{proof}

\begin{remark}\label{rmk:level}
    Let $\mathcal K \subset \operatorname{GL}_2(\hat{A})$ be an admissible subgroup. Applying the Lefschetz trace formula to the crystal $\underline{\mathcal S}_{k,l}(\mathcal K)$ yields a similar formula for traces of Hecke operators on Drinfeld cusp forms of level $\mathcal K$: for any $\mathfrak p$ not dividing the minimal conductor of~$\mathcal K$, we have
    \[
\Tr_{\mathbb C_\infty}(\T^n_{\mathfrak p} \hspace{0.2em} | \hspace{0.1em} \S_{k,l}(\mathcal K)) = -\sum_{[(E,\phi,[\psi])]/\bF_{{\mathfrak p}^n}} \Tr_{k-2}(\pi_{\phi}) \cdot (\pi_\phi \bar{\pi}_\phi)^{l-k+1},
    \]
    where the sum is taken over the set of isomorphism classes of Drinfeld modules over $\mathbb F_{{\mathfrak p}^n}$ with level $\mathcal K$-structure.
\end{remark}

\subsection{Ramanujan bounds}

As a consequence of the trace formula, we obtain bounds on the traces of Hecke operators. Denote by $|\cdot|_\infty$ a fixed extension to $\mathbb C_\infty$ of the norm on $K$ given by $|x|_\infty = q^{-\deg(\infty)v_\infty(x)}$.

\begin{corollary}[Ramanujan bound]\label{cor:rambound_tr}
    Let $\mathfrak p \trianglelefteq A$ be a maximal ideal of degree~$d$. Then for every $n \geq 1$, we have
    \begin{equation}\label{eq:ram_bound}
    |\Tr_{\mathbb C_\infty}(\T_{\mathfrak p}^n \hspace{0.2em} | \hspace{0.1em} \S_{k,l})|_{\infty} \leq (q^{nd})^{\frac{k}{2} + l - k}.
    \end{equation}
\end{corollary}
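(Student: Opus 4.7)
The plan is to deduce the bound directly from the trace formula of Theorem~\ref{thm:heckesum} by combining it with the Riemann hypothesis for Drinfeld modules of rank~$2$ and then exploiting the non-archimedean nature of $|\cdot|_\infty$ on $\mathbb C_\infty$. Since the right-hand side of Theorem~\ref{thm:heckesum} is a finite sum, it suffices to bound each summand in $|\cdot|_\infty$ and then appeal once more to the ultrametric inequality.

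First I would invoke Drinfeld's analogue of the Hasse--Weil bound: for every Drinfeld $A$-module $\phi$ of rank~$2$ over $\bF_{\mathfrak p^n}$, the Frobenius endomorphism $\pi_\phi$ is integral over $A$ with characteristic polynomial $X^2 - aX + b \in A[X]$, and both roots $\pi_\phi,\bar{\pi}_\phi \in \overline{K}$ satisfy
\[
|\pi_\phi|_\infty = |\bar{\pi}_\phi|_\infty = q^{nd/2},
\]
where $d = \deg(\mathfrak p)$, so that $\#\bF_{\mathfrak p^n} = q^{nd}$ and $|\pi_\phi \bar{\pi}_\phi|_\infty = q^{nd}$. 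Here $|\cdot|_\infty$ is extended from~$K$ to any algebraic extension in the standard way. This is the only external input beyond Theorem~\ref{thm:heckesum}.

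Granting the above, each monomial $\pi_\phi^i \bar{\pi}_\phi^{k-2-i}$ appearing in $\Tr_{k-2}(\pi_\phi)$ has $|\cdot|_\infty$-value equal to $q^{nd(k-2)/2}$, so by the ultrametric inequality $|\Tr_{k-2}(\pi_\phi)|_\infty \leq q^{nd(k-2)/2}$. Multiplying by $|(\pi_\phi \bar{\pi}_\phi)^{l-k+1}|_\infty = q^{nd(l-k+1)}$, each summand in Theorem~\ref{thm:heckesum} satisfies
\[
\bigl|\Tr_{k-2}(\pi_\phi)\cdot(\pi_\phi \bar{\pi}_\phi)^{l-k+1}\bigr|_\infty \;\leq\; q^{nd\left(\frac{k-2}{2} + l - k + 1\right)} \;=\; q^{nd\left(l - \frac{k}{2}\right)} \;=\; (q^{nd})^{\frac{k}{2}+l-k}.
\]
A final application of the ultrametric inequality to the finite sum over isomorphism classes of Drinfeld modules over $\bF_{\mathfrak p^n}$ yields the bound~\eqref{eq:ram_bound}.

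The main obstacle here is essentially nominal rather than technical: one must carefully match the normalization of $|\cdot|_\infty$ used on $\mathbb C_\infty$ with the statement of Drinfeld's Riemann hypothesis, in order to confirm that the resulting exponent is indeed $nd/2$ and not some rescaled variant. Once this normalization is pinned down, the rest of the argument is a short arithmetic manipulation that crucially relies on the non-archimedean triangle inequality — this is precisely where the function-field setting behaves more favorably than the classical setting, since over $\mathbb C$ a sum of $N$ terms of absolute value~$\leq M$ carries an extra factor of~$N$, whereas here it does not.
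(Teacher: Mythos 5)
Your proof is correct and follows essentially the same route as the paper: both invoke the Riemann hypothesis for rank-$2$ Drinfeld modules (that $\pi_\phi$ and $\bar{\pi}_\phi$ are Weil numbers with $|\pi_\phi|_\infty = |\bar{\pi}_\phi|_\infty = q^{nd/2}$), bound each summand of the trace formula from Theorem~\ref{thm:heckesum}, and conclude via the ultrametric inequality, arriving at the same exponent $nd(l - k/2)$. The normalization concern you raise is handled in the paper by the explicit convention $|x|_\infty = q^{-\deg(\infty)v_\infty(x)}$ fixed just before the corollary.
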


\begin{proof}
For any Drinfeld module over~$\bF_{\mathfrak p^n}$ with Frobenius endomorphism~$\pi$, the elements~$\pi$ and~$\bar{\pi}$ are Weil numbers of rank~$2$ over $\mathbb F_{\mathfrak p^n} \cong \mathbb F_{q^{nd}}$. In particular,
    \[
    |\pi|_{\infty} = |\bar{\pi}|_{\infty} = q^\frac{nd}{2}.
    \]
By equation~\eqref{eq:hecketrace}, the definition of~$\Tr_{k-2}$, and the triangle inequality, we obtain
    \[
    q^{-nd(l-k+1)}|\Tr_{\mathbb C_\infty}(\T_{\mathfrak p}^n \hspace{0.2em} | \hspace{0.1em} \S_{k,l})|_\infty \leq \max_\pi \{ | \Tr_{k-2}(\pi,\bar{\pi})|_\infty \} \leq q^{nd(k-2)/2}.
    \]
    This yields the desired inequality.
\end{proof}

\begin{remark}
    Using Remark~\ref{rmk:level}, one can use the same argument to obtain a Ramanujan bound for the traces of~$\T_{\mathfrak p}$ acting on $\S_{k,l}(\mathcal K)$ for any~$\mathfrak p$ not dividing the minimal conductor of~$\mathcal K$.
\end{remark}

\begin{remark}
    The bound~\eqref{eq:ram_bound} involves the exponent $k/2 + l - k$, which at first sight looks strange. We argue that the main term in the exponent is~$k/2$, whereas the term $l-k$ is an artefact of the adelic interpretation of Hecke operators. As explained in Remark~\ref{rmk:hecke}, it is natural to view~$l$ as an element in $\mathbb Z/(q-1)\mathbb Z$, but then the adelic Hecke operator is only defined up to a character twist. After a suitable normalisation, the exponent does indeed reduce to $k/2$, as demonstrated by the case $A = \mathbb F_q[T]$ (cf.\ Corollary~\ref{cor:rambound2} below).
\end{remark}

\begin{corollary}[Ramanujan bound for {$\bF_q[T]$}]\label{cor:rambound2}
    Let $A = \bF_q[T]$ and fix a monic and irreducible polynomial $P \in A$ of degree~$d$. Let $\mathfrak p = (P)$ and denote by $\T_{\mathfrak p}^{\mathbb F_q[T]}$ the Hecke operator for $\mathfrak p$ as defined in \cite[\textsection 7]{gekeler_coeffs}. Then for every $n \geq 1$, we have
    \[
    \deg  \Tr \left( \left(\T_{\mathfrak p}^{\mathbb F_q[T]}\right)^n \hspace{0.1em} \bigg{|} \hspace{0.2em} \S_{k,l} \right) \leq \frac{ndk}{2}.
    \]
\end{corollary}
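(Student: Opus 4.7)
The plan is to reduce the statement directly to Corollary~\ref{cor:rambound_tr} by means of the comparison between the two Hecke operators recorded in Remark~\ref{rmk:hecke}. The hard analytic work has already been done in the proof of Corollary~\ref{cor:rambound_tr}; what remains is purely bookkeeping with the character twist relating $\T_{\mathfrak p}$ and $\T_{\mathfrak p}^{\bF_q[T]}$ and with the normalisation of $|\cdot|_\infty$ for $A = \bF_q[T]$.

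First I would recall from Remark~\ref{rmk:hecke} that
\[
\T_{\mathfrak p}^{\bF_q[T]} = P^{k-l}\T_{\mathfrak p}
\]
as operators on $\S_{k,l}$. Since $P$ is a scalar (it acts through its image in $\mathbb C_\infty$), we get
\[
\left(\T_{\mathfrak p}^{\bF_q[T]}\right)^n = P^{n(k-l)}\T_{\mathfrak p}^n,
\]
and hence
\[
\Tr\left(\left(\T_{\mathfrak p}^{\bF_q[T]}\right)^n \hspace{0.1em} \bigg{|} \hspace{0.2em} \S_{k,l} \right) = P^{n(k-l)} \cdot \Tr(\T_{\mathfrak p}^n \hspace{0.2em} | \hspace{0.1em} \S_{k,l}).
\]

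Next I would translate absolute values to degrees. For $A = \bF_q[T]$ we have $C = \mathbb P^1_{\bF_q}$, $\deg(\infty) = 1$, and $|P|_\infty = q^{\deg(P)} = q^d$; more generally $\deg(x) := \log_q |x|_\infty$ extends the usual polynomial degree on $A$ to $\mathbb C_\infty$. Applying Corollary~\ref{cor:rambound_tr}, which gives
\[
|\Tr(\T_{\mathfrak p}^n \hspace{0.2em} | \hspace{0.1em} \S_{k,l})|_\infty \leq (q^{nd})^{\frac{k}{2} + l - k},
\]
combined with the multiplicativity $|P^{n(k-l)}|_\infty = q^{nd(k-l)}$, yields
\[
\left|\Tr\left(\left(\T_{\mathfrak p}^{\bF_q[T]}\right)^n \hspace{0.1em} \bigg{|} \hspace{0.2em} \S_{k,l} \right)\right|_\infty \leq q^{nd(k-l)} \cdot q^{nd(\frac{k}{2} + l - k)} = q^{\frac{ndk}{2}}.
\]
Taking $\log_q$ of both sides gives the claimed bound $\deg \Tr((\T_{\mathfrak p}^{\bF_q[T]})^n | \S_{k,l}) \leq \frac{ndk}{2}$.

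There is essentially no obstacle here beyond making sure the character twist in Remark~\ref{rmk:hecke} is applied with the correct sign; the main content of the corollary is Corollary~\ref{cor:rambound_tr}, and this last step simply removes the artefactual factor $q^{nd(l-k)}$ by undoing the adelic normalisation. This also explains the remark preceding the corollary: after reverting to the classical Hecke operator, the exponent is precisely $k/2$.
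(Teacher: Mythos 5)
Your proposal is correct and follows exactly the paper's (one-line) argument: apply the relation $\T_{\mathfrak p}^{\mathbb F_q[T]} = P^{k-l}\T_{\mathfrak p}$ from Remark~\ref{rmk:hecke}, use $|f|_\infty = q^{\deg(f)}$, and combine with Corollary~\ref{cor:rambound_tr}. The exponent bookkeeping checks out, so nothing further is needed.
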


\begin{proof}
    This follows because $|f|_\infty = q^{\deg(f)}$ for $f \in A$, and the fact that $\T_{\mathfrak p}^{\mathbb F_q[T]} = P^{k-l}\T_{\mathfrak p}$.
\end{proof}

We conclude with an observation about the Hecke eigenvalues. In characteristic~0, Newton's identities imply that for an operator $\T$ acting on a $d$-dimensional vector space~$V$, knowing $\Tr(\T^n)$ for $n = 1, \ldots, d$ is equivalent to knowing the eigenvalues of~$\T$. Although this fails in characteristic~$p$, one can still say the following.

\begin{corollary}
    Suppose the action of $\T_{\mathfrak p}$ on $\S_{k,l}$ does not have~$p$ repeated eigenvalues. Then any eigenvalue $\alpha$ of $\T_{\mathfrak p}$ satisfies 
    \[
    |\alpha|_{\infty} \leq (q^d)^{\frac{k}{2} + l - k}.
    \]
\end{corollary}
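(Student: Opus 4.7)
The plan is to combine the trace bound of Corollary~\ref{cor:rambound_tr} with a generating-function argument. Let $\alpha_1,\ldots,\alpha_m\in\mathbb C_\infty$ denote the distinct eigenvalues of $\T_{\mathfrak p}$ acting on $\S_{k,l}$, with algebraic multiplicities $e_1,\ldots,e_m$; the hypothesis reads $1\le e_i<p$ for every $i$, so each $e_i$ is a unit in $\mathbb C_\infty$. Set $M:=(q^d)^{k/2+l-k}$; Corollary~\ref{cor:rambound_tr} gives $|\Tr_{\mathbb C_\infty}(\T^n_{\mathfrak p}\mid\S_{k,l})|_\infty\le M^n$ for every $n\ge 1$.

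Consider the generating series
\[
F(t)\;:=\;\sum_{n\ge 1}\Tr_{\mathbb C_\infty}(\T^n_{\mathfrak p}\mid\S_{k,l})\,t^n\;\in\;\mathbb C_\infty[\![t]\!].
\]
By the growth bound on its coefficients, $F$ converges on the open rigid-analytic disc $\{t\in\mathbb C_\infty:|t|_\infty<1/M\}$. On the other hand, expanding $\Tr(\T^n_{\mathfrak p})=\sum_i e_i\alpha_i^n$ and summing geometrically yields the rational identity
\[
F(t)\;=\;\sum_{i=1}^{m}\frac{e_i\alpha_i\,t}{1-\alpha_i\,t}.
\]

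The main step is to show that when $F$ is written in lowest terms as $P(t)/Q(t)$ with $P,Q\in\mathbb C_\infty[t]$ coprime, then $Q$ equals $\prod_{i=1}^{m}(1-\alpha_i t)$ up to a unit. A direct computation of the residue of the displayed rational function at $t=1/\alpha_i$ gives $-e_i/\alpha_i$, which is nonzero in $\mathbb C_\infty$ precisely because $e_i\not\equiv 0\pmod p$ by hypothesis. Hence no factor $(1-\alpha_i t)$ cancels, and $Q$ has the claimed form.

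To conclude, I invoke the standard fact that a formal power series $F\in\mathbb C_\infty[\![t]\!]$ which converges on a disc $|t|_\infty<r$ and equals a rational function $P/Q$ in lowest terms cannot have $Q$ vanish in that disc; indeed, a zero $t_0$ of $Q$ of order $k$ with $|t_0|_\infty<r$ would force $(t-t_0)^k\mid P$, via the identity $P=QF$ and the local Taylor expansion of $F$ around $t_0$, contradicting coprimality. Applied with $r=1/M$ and $Q(t)=\prod_i(1-\alpha_i t)$, this forces $|1/\alpha_i|_\infty\ge 1/M$, i.e.\ $|\alpha_i|_\infty\le M$ for every $i$. The main subtlety lies in the residue non-vanishing of step~3, which is exactly where the hypothesis on multiplicities is essential: without it, a partial-fraction contribution could cancel in characteristic~$p$, and the radius-of-convergence bound on $F$ would no longer control the positions of the $\alpha_i$.
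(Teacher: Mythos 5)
Your argument is correct, and at the top level it is the same strategy as the paper's: combine the trace bound of Corollary~\ref{cor:rambound_tr} for all powers $\T_{\mathfrak p}^n$ with the fact that, in characteristic $p$, such bounds control the eigenvalues provided no eigenvalue has multiplicity divisible by $p$. The difference is that the paper outsources this second ingredient to \cite[Prop.\ 4.1]{newton}, whereas you prove it from scratch: writing $\Tr(\T_{\mathfrak p}^n)=\sum_i e_i\alpha_i^n$, observing that the generating series is the rational function $\sum_i e_i\alpha_i t/(1-\alpha_i t)$ whose reduced denominator is exactly $\prod_i(1-\alpha_i t)$ because the residues $-e_i/\alpha_i$ survive in characteristic $p$ (this is precisely where the multiplicity hypothesis, read as $p\nmid e_i$, enters), and then using the non-archimedean radius-of-convergence bound $|t|_\infty<M^{-1}$ to force $|1/\alpha_i|_\infty\ge M^{-1}$. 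All steps are sound over $\mathbb C_\infty$ (complete and algebraically closed, so re-expansion of a convergent series about any point of its disc is legitimate, and convergence is equivalent to the terms tending to zero). Two cosmetic remarks: eigenvalues $\alpha_i=0$ contribute no pole, but then the bound is vacuous for them, so nothing is lost; and your residue computation is really the statement that the partial-fraction numerator does not vanish at $t=1/\alpha_i$, which you could state directly to avoid invoking residues over a non-archimedean field. This self-contained version is a nice addition, since it makes transparent why the hypothesis on multiplicities cannot be dropped.
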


\begin{proof}
    This is a direct consequence of the Ramanujan bound~\eqref{eq:ram_bound} and \cite[Prop.~4.2]{devries_newton}.
\end{proof}

We further study the consequences of the trace formula for $A = \bF_q[T]$ in \cite{devries_traces}.

\begingroup
\sloppy
\printbibliography
\endgroup
\end{document}